\newcommand{\bbR}{\mathbb R}
\newcommand{\RP}{\mathbb R \mathbb P}
\newcommand{\bbH}{\mathbb H}
\newcommand{\bbP}{\mathbb P}
\newcommand{\lamdba}{\lambda} % This has to be the most common typo I make.
\DeclareFontFamily{U}{mathx}{\hyphenchar\font45}
\DeclareFontShape{U}{mathx}{m}{n}{<-> mathx10}{}
\DeclareSymbolFont{mathx}{U}{mathx}{m}{n}
\DeclareMathAccent{\widebar}{0}{mathx}{"73}
\theoremstyle{definition} 
\theoremstyle{plain}
\newtheorem{theorem}{Theorem}[section]
\newtheorem{proposition}[theorem]{Proposition}
\newtheorem{lemma}[theorem]{Lemma}
\newtheorem{corollary}[theorem]{Corollary}
\newtheorem{definition}[theorem]{Definition}
\newtheorem{fact}[theorem]{Fact}
\newtheorem*{remark}{Remark}
\newtheorem{claim}[theorem]{Claim}
\newtheorem*{remarks}{Remarks}
\numberwithin{equation}{section}
\newtheorem*{theoremA}{Theorem A.}
\newtheorem*{theoremB}{Theorem B}
\newtheorem*{theoremC}{Theorem C.}
\newtheorem*{theoremD}{Theorem D}
 \title[Metrics on Hitchin Components]{\vspace{-.2cm}Metrics on Hitchin Components from H\"older Distortion}
\author[A. Nolte]{Alexander Nolte}
\date{}
\begin{document}
\,
\maketitle
\allowdisplaybreaks
\vspace{-.35cm}
\begin{abstract}
    We observe Thurston's asymmetric metric on Teichm\"uller space may be expressed in terms of the H\"older regularity of boundary maps.
    We then associate $2$-dimensional stratified loci in $\RP^{n-1}$ to $\text{PSL}_n(\mathbb{R})$ Hitchin representations with $n > 3$.
    We prove that measuring the relative H\"older distortion of these loci gives asymmetric metrics on the Hitchin component $\text{Hit}_n(S)$ with complete and geometrically meaningful symmetrizations.
    These are the first known geometrically significant complete metrics on $\text{Hit}_n(S)$ for $n > 3$.
\end{abstract}
\stepcounter{section}

Let $\rho_a$ and $\rho_b$ be discrete and faithful (\textit{Fuchsian}) representations in $\text{PSL}_2(\bbR)$ of the fundamental group $\Gamma$ of a closed oriented hyperbolic surface $S$. There is a unique homeomorphism $\phi_{ab}$ of the boundary $\partial \bbH$ of the hyperbolic plane $\bbH$ that is equivariant with respect to the actions of $\rho_b$ and $\rho_a$.
Let $\alpha_{ab}$ be the supremum of the $\alpha$ for which $\phi_{ab}$ is $\alpha$-H\"older.

Observe $d(\rho_a,\rho_b) = -\log \alpha_{ab}$ is non-negative and satisfies the triangle inequality as supremal H\"older exponents are super-multiplicative under composition and valued in $[0,1]$. 

In fact, $d$ has been seen before: it is a new expression of Thurston's asymmetric metric $d_{\text{Th}}$ \cite{thurston1998minimal} on Teichm\"uller space $\mathcal{T}(S)$ (Theorem A). That is, $d(\rho_a, \rho_b) = \log \text{L}_{ab}$, where $\text{L}_{ab}$ is the infimal Lipschitz constant of identity-isotopic homeomorphisms from $\bbH/\rho_a(\Gamma)$ to $\bbH/\rho_b(\Gamma)$.

This paper centers on extensions of this idea to Hitchin components $\text{Hit}_n(S)$. These are the connected components of the $\text{PSL}_n(\bbR)$ character variety of $\Gamma$ that contain discrete and faithful representations of $\Gamma$ in $\text{PSL}_2(\bbR)$-subgroups of $\text{PSL}_n(\bbR)$ that act irreducibly on $\bbR^n$. The so-called \textit{Hitchin representations} that make up $\text{Hit}_n(S)$ demonstrate both remarkably similar behavior to Fuchsian representations in $\text{PSL}_2(\bbR)$ and rich ``higher rank'' phenomena. They have seen intense interest and study in recent years (e.g. \cite{bridgeman2015pressure} \cite{dancigerGK2024convex} \cite{guichard2008convex} \cite{guichard2012anosov} \cite{labourie2017cyclic} \cite{potrie2017eigenvalues-entropy} \cite{wienhard2018Invitation}).

\medskip 

Our main results are as follows. We associate to any $\text{PSL}_n(\bbR)$-Hitchin representation with $n > 3$ a particularly well-behaved locus $\mathcal{B}_\rho$ (a \textit{torus bouquet}) in $\RP^{n-1}$. We prove that measuring the H\"older regularity of canonical maps between these torus bouquets gives asymmetric metrics $D_B$ on $\text{Hit}_n(S)$ with completeness properties (Theorems B--D).
Notably, the symmetrizations $D_B^*$ associated to measuring bi-H\"older regularity are complete.
These are the first complete metrics on $\text{Hit}_n(S)$ for $n > 3$ that are defined by a geometric construction.

\medskip 

Much effort has been invested constructing and studying metrics on Hitchin components.
Two prior constructions are valid for all $n$: pressure metrics from dynamics extending the Weil-Petersson metric \cite{bridgeman2015pressure}, and a dynamical extension \cite{carvajales2022thurstonsasymmetric} of Thurston's asymmetric metric.

A basic feature of the theory is that the natural notion of lengths of curves associated to $\text{PSL}_n(\bbR)$ Hitchin representations is in general not valued in $\bbR^+$, but instead in the Weyl chamber $\mathfrak{a}^+$ of $\mathfrak{sl}_n(\bbR)$ (see \S \ref{sss-lie-notations}).
As a result, both prior constructions require a choice of scalar length measurement.
These constructions also rely on an entropy invariant that remains mysterious except in isolated cases \cite{potrie2017eigenvalues-entropy}.
In contrast, our construction requires neither a choice of length measurement nor the use of dynamical invariants such as entropy.

\subsection{Main Results} Let $\rho_a$ and $\rho_b$ now be Hitchin representations $\Gamma \to \text{PSL}_n(\bbR)$.
Let $\mathcal{F}(\bbR^n)$ be the manifold of full flags of subspaces of $\bbR^n$, i.e. of ($n-1$)-ples $(V_1, ..., V_{n-1})$ of nested subspaces of $\bbR^n$ of dimension $1, ..., n-1$.
Let $\partial \Gamma$ be the Gromov boundary of $\Gamma$, which is homeomorphic to the circle and is acted on by $\Gamma$.
As for Fuchsian representations, there are canonical equivariant maps $\xi_a, \xi_b : \partial \Gamma \to \mathcal{F}(\bbR^n)$ that respect the dynamics of the action of $\Gamma$ on $\partial \Gamma$ \cite{guichard2012anosov} \cite{labourie2004anosov}.
For $c \in \{a,b\}$ write $\xi_c = (\xi^1_c, ..., \xi^{n-1}_c)$, expanding out the flags.

\begin{definition}\label{def-k-couple-map}
    For $k =1,..., n-1$ the {\rm $k$-coupling map} $\phi_{ab}^k: \xi^k_b(\partial \Gamma) \to \xi^k_a(\partial \Gamma)$ is $\xi^k_a \circ (\xi^k_b)^{-1}$.
    Let $\alpha_{ab}^k$ be the supremum of the $\alpha > 0$ so $\phi_{ab}^k$ is $\alpha$-H\"older.
    
    The {\rm $k$-coupling distance $d_k$} on ${\rm{Hit}}_n(S)$ is defined by $d_k(\rho_a, \rho_b) = - \log \alpha_{ab}^k$. 
\end{definition}

Note that the domain and target of $\phi_{ab}^k$ are ``flipped'' relative to the arguments of $d_k$.

In \cite{carvajales2022thurstonsasymmetric}, Carvajales-Dai-Pozzetti-Wienhard extend Thurston's asymmetric metric to \textit{stretch metrics} $d_{\text{Th}}^{\varphi}$ on $\text{Hit}_n(S)$. Here $\varphi$ is a functional on $\mathfrak{a}^+$ that determines a length function $\ell^{\alpha_k}$ on $\text{PSL}_n(\bbR)$ (\S \ref{sss-lie-notations}).
We encounter the case where $\varphi$ is a simple root $\alpha_k$ ($k = 1, ..., n-1$), where $d_{\text{Th}}^{\alpha_k}(\rho_a, \rho_b) = \log  \, \sup_{\gamma \in\Gamma - \{e\}} [\ell^{\alpha_k}(\rho_b(\gamma))/\ell^{\alpha_k}(\rho_a(\gamma))]$ (\cite{carvajales2022thurstonsasymmetric} \cite{potrie2017eigenvalues-entropy}).
Our first theorem is: 

\begin{theoremA}[Coupling is Stretch]\label{thm-computation}
    On $\mathcal{T}(S)$, Thurston's asymmetric metric is the coupling metric: $d_1 = d_{{\rm{Th}}} $. On ${\rm{Hit}}_n(S)$, simple root stretch metrics are $k$-coupling metrics: $d_k = d_{{\rm{Th}}}^{\alpha_k}$.
\end{theoremA}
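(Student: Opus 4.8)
The plan is to reduce to the case $k = 1$ via exterior powers, to prove that case by a two-sided estimate on the supremal H\"older exponent $\alpha_{ab}$ of the coupling map, and to read off $d_1 = d_{\text{Th}}$ on $\mathcal T(S)$ as the instance $n = 2$. For the reduction, compose a Hitchin representation $\rho$ with the exterior power $\Lambda^k\colon \text{PSL}_n(\bbR)\to\text{PSL}(\Lambda^k\bbR^n)$; this is projective Anosov, with first boundary curve the Pl\"ucker image of $\xi^k_\rho$ \cite{guichard2012anosov}\cite{labourie2004anosov}. Since Hitchin representations are Borel Anosov, each $\rho(\gamma)$ is $\bbR$-split with $\lambda_1 > \cdots > \lambda_n$, so $\ell^{\alpha_1}(\Lambda^k\rho(\gamma)) = \log(\lambda_1\cdots\lambda_k) - \log(\lambda_1\cdots\lambda_{k-1}\lambda_{k+1}) = \ell^{\alpha_k}(\rho(\gamma))$. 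The Pl\"ucker embedding is a smooth embedding of a compact manifold, hence bi-Lipschitz onto its image, so it preserves supremal H\"older exponents of maps between subsets; thus $d_k(\rho_a,\rho_b) = d_1(\Lambda^k\rho_a, \Lambda^k\rho_b)$, and by the periodic description of stretch metrics $d_{\text{Th}}^{\alpha_k}(\rho_a,\rho_b) = d_{\text{Th}}^{\alpha_1}(\Lambda^k\rho_a, \Lambda^k\rho_b)$. So it suffices to prove, for projective Anosov $\rho_a, \rho_b\colon\Gamma\to\text{PGL}(V)$ with boundary maps $\xi_a, \xi_b\colon\partial\Gamma\to\bbP(V)$, that $\alpha_{ab} = \inf_{\gamma\ne e}\ell^{\alpha_1}(\rho_a\gamma)/\ell^{\alpha_1}(\rho_b\gamma) =: \alpha_0$. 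The case $n = 2$ specializes to the setting of the introduction, with $\xi^1_{\rho_c}$ the Fuchsian boundary identification and $\ell^{\alpha_1}$ the hyperbolic translation length, so $\alpha_{ab} = \alpha_0$ there yields $d_1 = d_{\text{Th}}$ by Thurston's spectral characterization of the latter.

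For $\alpha_{ab}\le\alpha_0$ (the soft direction), fix $\gamma\ne e$ with attracting fixed point $\gamma^+\in\partial\Gamma$. Then $\xi_c(\gamma^+)$ is the attracting line of the proximal matrix $\rho_c(\gamma)$, and near it $\rho_c(\gamma)$ acts on $\bbP(V)$ with a linearizable contraction whose weakest rate is $\lambda_2(\rho_c\gamma)/\lambda_1(\rho_c\gamma) = e^{-\ell^{\alpha_1}(\rho_c\gamma)}$. If $\xi_a\circ\xi_b^{-1}$ is $\alpha$-H\"older, choose $w$ near $\xi_a(\gamma^+)$ along the slowest eigendirection of $\rho_a(\gamma)$, so $d_\bbP(\rho_a(\gamma)^m w, \xi_a(\gamma^+))\gtrsim e^{-m\ell^{\alpha_1}(\rho_a\gamma)}$, and put $u := (\xi_b\circ\xi_a^{-1})(w)$, for which $d_\bbP(\rho_b(\gamma)^m u, \xi_b(\gamma^+))\lesssim e^{-m\ell^{\alpha_1}(\rho_b\gamma)}$ since that is the maximal contraction rate there. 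Equivariance gives $(\xi_a\circ\xi_b^{-1})(\rho_b(\gamma)^m u) = \rho_a(\gamma)^m w$, so the H\"older inequality forces $e^{-m\ell^{\alpha_1}(\rho_a\gamma)}\lesssim e^{-m\alpha\ell^{\alpha_1}(\rho_b\gamma)}$ for all $m$, hence $\alpha\le\ell^{\alpha_1}(\rho_a\gamma)/\ell^{\alpha_1}(\rho_b\gamma)$; taking the infimum over $\gamma$ gives the bound.

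The reverse inequality $\alpha_{ab}\ge\alpha_0$ is the heart of the matter and I expect it to be the main obstacle. The mechanism is the coarse conformality of projective Anosov boundary maps: with respect to a fixed word-geodesic framework on $\Gamma$ and the Cartan projection $\mu$, one has $-\log d_\bbP(\xi_c(x),\xi_c(y)) = (x\mid y)^{\alpha_1}_{\rho_c} + O(1)$ uniformly in $x, y$, where $(x\mid y)^{\alpha_1}_{\rho_c}$ is the Gromov product of the metric $d^{\alpha_1}_{\rho_c}(g,h) := \alpha_1(\mu(\rho_c(g^{-1}h)))$ (coarsely a metric quasi-isometric to the word metric, by the Anosov property; this underlies the H\"older statements of \cite{guichard2012anosov} and the viewpoint of \cite{carvajales2022thurstonsasymmetric}, and can also be derived from exponential convergence of boundary maps). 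Granting this, $\xi_a\circ\xi_b^{-1}$ being $\alpha$-H\"older is equivalent up to $O(1)$ to $(x\mid y)^{\alpha_1}_{\rho_a}\ge\alpha(x\mid y)^{\alpha_1}_{\rho_b} - O(1)$ for all $x, y$; since a Gromov product is realized up to $O(1)$ by $\alpha_1(\mu(\rho_c(g)))$ for $g$ along geodesic segments fellow-travelling rays to $x$ and $y$, this reduces to bounding $\alpha_1(\mu(\rho_a(g)))/\alpha_1(\mu(\rho_b(g)))$ below by $\alpha_0 - o(1)$ along long geodesic segments. This last step is the crux: it follows by closing up a long segment into a conjugacy class (Anosov closing) using the coarse additivity of $g\mapsto\alpha_1(\mu(\rho_c(g)))$ along geodesics — uniform since $\Gamma$ is a cocompact surface group — which changes $\alpha_1$-lengths by a bounded amount, so the infimum over segments is governed by $\alpha_0$, the infimum over closed geodesics. (Alternatively, once coarse conformality is established, one may invoke \cite{carvajales2022thurstonsasymmetric} directly, since $-\log d_\bbP(\xi_c x,\xi_c y)$ is then exactly the quantity their $\alpha_1$-stretch metric compares; the remaining work is then to match the two formalisms.) Combining the bounds yields $\alpha_{ab} = \alpha_0$, hence $d_k = d_{\text{Th}}^{\alpha_k}$ and, in the case $n = 2$, $d_1 = d_{\text{Th}}$.
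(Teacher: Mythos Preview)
Your reduction via exterior powers and the Pl\"ucker embedding is exactly what the paper does in \S\ref{s-simple-roots-computation}, and your soft direction is the same mechanism as the paper's explicit-sequence argument (the paper's Lemma~\ref{lemma-reg-upper-bounds} and the first half of \S\ref{ss-computation-teich}). One correction: you frame the key estimate for \emph{general} projective Anosov $\rho_a,\rho_b$, but both directions of your argument require that the limit curve $\xi^1_c(\partial\Gamma)$ be $C^1$ with tangent line $\xi^2_c$ at each point, i.e.\ the $(1,1,2)$-hyperconvex condition of \cite{pozzettiSambarinoWienhard2021conformality}. Without it, in your soft direction there is no reason a point $w$ on $\xi_a(\partial\Gamma)$ near $\xi_a(\gamma^+)$ has a nontrivial second-eigenvector component, so the lower bound $d(\rho_a(\gamma)^m w,\xi_a(\gamma^+))\gtrsim e^{-m\ell^{\alpha_1}(\rho_a\gamma)}$ can fail; and in your hard direction the coarse conformality estimate $-\log d_{\bbP}(\xi_c(x),\xi_c(y)) = (x\mid y)^{\alpha_1}_{\rho_c}+O(1)$ is precisely what $(1,1,2)$-hyperconvexity buys (this is the content of \cite{pozzettiSambarinoWienhard2021conformality}, not of \cite{guichard2012anosov} or \cite{carvajales2022thurstonsasymmetric} as you cite). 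For exterior powers of Hitchin representations this hypothesis holds, so your argument goes through for the theorem as stated; but the intermediate claim for arbitrary projective Anosov representations is false as written.

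Your hard direction is a genuinely different route from the paper's. The paper (and Tsouvalas \cite{tsouvalas2023holder}, on which it relies) proceeds via Abels--Margulis--Soifer quantitative proximality \cite{abelsMargulisSoifer1995semigroups}: one uses the uniform convergence group property to move an arbitrary close pair $(x,y)$ by some $\gamma_{xy}\in\Gamma$ into a fixed compact family of separated triples, arranges via a finite correction set that $\rho_c(\gamma_{xy})$ is $(r,\varepsilon)$-proximal, and then computes directly in adapted affine charts. Your approach instead passes through the Gromov product on the $\alpha_1$-Cartan pseudometric and an Anosov closing argument to replace Cartan projections along geodesic segments by Jordan projections of periodic elements. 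Both are standard packages and both yield attainment of the optimal exponent; the proximality route is more hands-on and self-contained once \cite{abelsMargulisSoifer1995semigroups} is granted, while yours is cleaner to state but outsources the analytic work to the coarse-conformality estimate of \cite{pozzettiSambarinoWienhard2021conformality} and a closing lemma, neither of which you make precise. If you pursue your route, cite \cite{pozzettiSambarinoWienhard2021conformality} for the distance--Gromov-product comparison and be explicit about where the $(1,1,2)$-hypothesis enters.
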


As an application of Theorem A, we obtain a simplified proof of a known comparison for $d_{\text{Th}}$.
Let $d_{\text{T}}$ be Teichm\"uller's metric on $\mathcal{T}(S)$, i.e. $2d_{\text{T}}(\rho_a, \rho_b)$ is the logarithm of the infimal quasiconformal constant of an identity-isotopic homeomorphism from $\bbH/\rho_a(\Gamma)$ to $ \bbH/\rho_b(\gamma)$.

\begin{corollary}[\cite{sorvali1973dilatation}, \cite{wolpert1979length}] $d_{\rm{Th}} \leq 2 d_{\rm{T}}$.
    
\end{corollary}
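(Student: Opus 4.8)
The plan is to feed Theorem A into the classical fact that quasiconformal self-homeomorphisms of the disk have H\"older boundary extensions. By Theorem A, $d_{\mathrm{Th}}(\rho_a,\rho_b) = d_1(\rho_a,\rho_b) = -\log \alpha_{ab}$, where $\alpha_{ab}$ is the supremal H\"older exponent of the equivariant boundary homeomorphism $\phi_{ab}\colon \partial\bbH \to \partial\bbH$. By definition, $2 d_{\mathrm{T}}(\rho_a,\rho_b) = \log K_{ab}$, where $K_{ab}$ is the infimal quasiconformal constant of an identity-isotopic homeomorphism between the hyperbolic surfaces $X_b = \bbH/\rho_b(\Gamma)$ and $X_a = \bbH/\rho_a(\Gamma)$; the direction of the homeomorphism is immaterial, since passing to the inverse preserves the quasiconformal constant. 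Thus the corollary is equivalent to $\alpha_{ab} \ge 1/K_{ab}$, and it suffices to show that whenever $h \colon X_b \to X_a$ is identity-isotopic and $K$-quasiconformal, $\phi_{ab}$ is $\tfrac1K$-H\"older.

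First I would lift $h$ to the universal cover, obtaining a $K$-quasiconformal homeomorphism $\tilde h\colon \bbH \to \bbH$ which, after choosing the lift so as to induce the identity on $\pi_1$ (possible since $h$ is isotopic to the identity), satisfies $\tilde h \circ \rho_b(\gamma) = \rho_a(\gamma)\circ \tilde h$ for all $\gamma \in \Gamma$. Quasiconformal self-homeomorphisms of $\bbH$ extend to homeomorphisms of $\overline{\bbH}$, so $\tilde h$ has a boundary extension $\partial \tilde h$, which is then an orientation-preserving homeomorphism of $\partial\bbH$ equivariant with respect to $\rho_b$ and $\rho_a$. By the uniqueness of such a homeomorphism recalled at the start of the paper, $\partial \tilde h = \phi_{ab}$.

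It remains to estimate the H\"older exponent of $\partial\tilde h$, which is the content of Mori's theorem: a $K$-quasiconformal self-homeomorphism of the disk restricts on the boundary circle to a map that is H\"older of exponent $\tfrac1K$. Transporting to $\bbH$ via a Cayley transform, which is a smooth diffeomorphism of the boundary circles and hence bi-Lipschitz for any fixed smooth metric on $\partial\bbH \cong S^1$, the same holds for self-homeomorphisms of $\bbH$; the normalization $f(0)=0$ in Mori's statement is removed by precomposing with a M\"obius automorphism, which is smooth on the closed disk and therefore affects neither the quasiconformal constant nor the H\"older exponent of the boundary map, while precomposition by a bi-Lipschitz map preserves the H\"older exponent. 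Hence $\phi_{ab} = \partial\tilde h$ is $\tfrac1K$-H\"older, so $\alpha_{ab} \ge \tfrac1K$. Letting $K \to K_{ab}$ (equivalently, using the Teichm\"uller extremal map, which exists and realizes $K_{ab}$ for a closed surface) gives $\alpha_{ab} \ge 1/K_{ab}$, i.e. $d_{\mathrm{Th}} \le 2 d_{\mathrm{T}}$.

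The only delicate point is the correct invocation of the quasiconformal-to-H\"older estimate: one must use the sharp exponent $\tfrac1K$ (rather than an exponent coming merely from quasisymmetry) and check that it genuinely applies to $\phi_{ab}$ — that is, handle the M\"obius normalization and the choice of metric on $\partial\bbH$ as above. The remaining ingredients — Theorem A, the equivariant lift, and uniqueness of the boundary homeomorphism — are standard or already in hand.
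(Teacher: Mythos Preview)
Your proof is correct and follows essentially the same approach as the paper's: combine Theorem A with the classical $K^{-1}$ H\"older bound for boundary values of $K$-quasiconformal disk maps (the paper attributes this to Ahlfors, you cite Mori---same result) and the definitions of $d_{\mathrm{T}}$ and $d_{\mathrm{Th}}$. You simply spell out the lifting and the identification of the boundary extension with $\phi_{ab}$ that the paper leaves implicit.
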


\begin{proof}
    Combine Theorem A, Ahlfors' classical lower bound of $K^{-1}$ for the H\"older regularity of a $K$-quasiconformal disk homeomorphism \cite{ahlfors1954quasiconformal}, and the definitions of $d_{\text{T}}$ and $d_{\text{Th}}$.
\end{proof}

Theorem A is a corollary of Theorem 1.9 in \cite{tsouvalas2023holder}. The reformulations of $d_\text{Th}$ and $d_{\text{Th}}^{\alpha_k}$ have not been observed before.
We give a different proof than \cite{tsouvalas2023holder} of the relevant regularity estimate for $\mathcal{T}(S)$ in \S \ref{s-teich-space-case}.
Both \cite{tsouvalas2023holder} and our proof rely crucially on \cite{abelsMargulisSoifer1995semigroups}; our technique is new.
The proof we present in this case is written to not use higher-rank machinery.

\subsubsection{Hitchin Components} Our main result concerns metrics on Hitchin components that similarly measure H\"older regularity.
In \S \ref{ss-bouquet-couplings}, for $\rho \in \text{Hit}_n(S)$ with $n > 3$, we construct a stratified $2$-dimensional set $\mathcal{B}_\rho$ in $\RP^{n-1}$.
We call $\mathcal{B}_\rho$ the \textit{torus bouquet} of $\rho$.
It is a finite union of tori that are $C^1$ away from a distinguished curve. See Figure \ref{fig-fuchsian-in-chart} (Left) for an example.

As before, for any $\rho_a, \rho_b \in \text{Hit}_n(S)$ there is a canonical homeomorphism $\Phi_{ab} : \mathcal{B}_{\rho_b} \to \mathcal{B}_{\rho_a}$ conjugating $\rho_b$ to $\rho_a$.
Paralleling Definition \ref{def-k-couple-map} above, we define a \textit{bouquet coupling metric} $D_B$ on $\text{Hit}_n(S)$ (Definitions \ref{def-bouquet-map}-\ref{def-bouquet-dist}).
We call the symmetrization $D_B^*$ of $D_B$ associated to measuring bi-H\"older regularity the \textit{bouquet bi-coupling metric}.
Both $D_B$ and $D_B^*$ are mapping class group invariant.
Our main theorem follows from bounds on $D_B$ in Theorem C below:

\begin{theoremB}
    For $n > 3$, the bouquet bi-coupling metric $D_B^*$ is a complete metric on ${\rm{Hit}}_n(S)$.
\end{theoremB}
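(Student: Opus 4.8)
The plan is to read off Theorem B from the two-sided estimates on $D_B$ provided by Theorem C, the substance lying in those estimates rather than in their assembly. Write $\alpha_{ab}$ for the supremal H\"older exponent of the canonical conjugacy $\Phi_{ab}:\mathcal{B}_{\rho_b}\to\mathcal{B}_{\rho_a}$, so that $D_B(\rho_a,\rho_b)=-\log\alpha_{ab}$. Since $\Phi_{ac}=\Phi_{ab}\circ\Phi_{bc}$, $\Phi_{aa}=\mathrm{id}$, $\Phi_{ab}^{-1}=\Phi_{ba}$, and $\mathcal{B}_\rho$ is a compact connected stratified set, we have $\alpha_{ab}\le 1$ with equality when $a=b$, so $D_B\ge 0$ and $D_B(\rho,\rho)=0$; the triangle inequality for $D_B$ follows from super-multiplicativity of supremal H\"older exponents under composition, exactly as in the Fuchsian discussion of the introduction. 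The bi-H\"older exponent of $\Phi_{ab}$ being $\min\{\alpha_{ab},\alpha_{ba}\}$, we have $D_B^*(\rho_a,\rho_b)=\max\{D_B(\rho_a,\rho_b),D_B(\rho_b,\rho_a)\}$, which is symmetric and satisfies the triangle inequality. It remains to prove (a) $D_B^*$ separates points and (b) $(\text{Hit}_n(S),D_B^*)$ is complete.

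For (a) I would use the lower estimate of Theorem C, which I expect to dominate a uniform multiple of $\max_k d_k(\rho_a,\rho_b)$, with $d_k=d_{\text{Th}}^{\alpha_k}$ the simple root stretch metric of Theorem A. Since each $d_{\text{Th}}^{\alpha_k}$ is a genuine asymmetric metric, $\max_k d_k$ separates points, whence $D_B^*(\rho_a,\rho_b)>0$ for $\rho_a\ne\rho_b$. Alternatively, and self-containedly: if $\Phi_{ab}$ and $\Phi_{ba}$ are both Lipschitz then, running the rate computation behind Theorem A near the distinguished curve of the bouquet — where $\mathcal{B}_\rho$ is only $C^0$ and the local model is governed by the gaps $\log(\lambda_k/\lambda_{k+1})$ — one finds every ratio $\ell^{\alpha_k}(\rho_b(\gamma))/\ell^{\alpha_k}(\rho_a(\gamma))$ equal to $1$, so that $\rho_a$ and $\rho_b$ have equal marked simple-root length spectra and are therefore conjugate.

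For (b), let $(\rho_i)$ be $D_B^*$-Cauchy, so $\sup_i D_B^*(\rho_1,\rho_i)<\infty$. The lower estimate of Theorem C then bounds $d_k(\rho_1,\rho_i)$ and $d_k(\rho_i,\rho_1)$ for each $k$, hence by Theorem A confines $\ell^{\alpha_k}(\rho_i(\gamma))$ to a fixed compact subinterval of $(0,\infty)$ for every fixed $\gamma$ and $k$; combined with whatever further control Theorem C supplies, the $\rho_i$ are forced into a compact $K\subset\text{Hit}_n(S)$ — via a Fricke-type properness of a suitable finite collection of length functions on the Hitchin component, the precise collection being dictated by, and part of the content of, Theorem C's lower estimate. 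Extract $\rho_{i_j}\to\rho_\infty\in K$ in the usual topology. The \emph{upper} estimate of Theorem C, together with the continuous — indeed H\"older — dependence of $\xi_\rho$, and thereby of $\mathcal{B}_\rho$ and the conjugacies $\Phi$, on $\rho$, then gives $D_B^*(\rho_{i_j},\rho_\infty)\to 0$. A Cauchy sequence with a convergent subsequence converges, so $\rho_i\to\rho_\infty$ in $D_B^*$, and $D_B^*$ is complete.

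The one genuinely hard ingredient is the lower estimate of Theorem C. Lower bounds on H\"older exponents are the delicate direction: one must produce explicit pairs of nearby points of $\mathcal{B}_{\rho_b}$ that $\Phi_{ab}$ compresses at a definite rate and identify that rate with the appropriate eigenvalue-gap ratio, upgrading the one-dimensional estimate underlying Theorem A to the two-dimensional stratified bouquet. The geometric heart — and the reason completeness is achieved — is that the $2$-dimensional bouquet simultaneously records distortion in directions transverse to its singular curve sufficient to dominate a \emph{proper} combination of length data; that this dominating combination is genuinely stronger than any single coupling distance $d_k$ is what the $2$-dimensionality of $\mathcal{B}_\rho$ must buy, and is the reason the bouquet, rather than the boundary curve alone, is needed for $n>3$. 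Once Theorem C is established, Theorem B is the packaging above.
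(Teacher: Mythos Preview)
Your scaffold matches the paper's: $D_B^*(\rho_a,\rho_b) = \max\{D_B(\rho_a,\rho_b), D_B(\rho_b,\rho_a)\}$, and completeness of $D_B^*$ follows from forward properness of $D_B$ (Theorem D), which in turn comes from the lower bound in Theorem C together with an external properness result. Two points in your proposal are loose or confused.

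First, the properness input is \emph{not} ``part of the content of Theorem C's lower estimate''; it is a separate theorem of Lee--Zhang (the Collar Lemma corollary) that $\rho \mapsto (\ell_\rho^{\mathrm{H}}(\gamma_i))_i$ is proper on $\mathrm{Hit}_n(S)$ for any curve system containing a pants decomposition with disk complement. The lower bound in Theorem C gives $\ell^{\alpha_j}_\rho(\gamma_i) \le e^R \ell^{\alpha_j}_{\rho_0}(\gamma_i)$ for each $j$, hence $\ell^{\mathrm{H}}_\rho(\gamma_i)$ is bounded, and Lee--Zhang delivers compactness. Your ``Fricke-type properness'' gesture is right in spirit but that specific result must be named; Theorem C alone does not supply it, and your hedge ``combined with whatever further control Theorem C supplies'' is unnecessary --- the lower bound suffices once Lee--Zhang is in hand.

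Second, your final paragraph has the directions reversed. The lower estimate on $D_B$ is an \emph{upper} bound on the H\"older exponent $\alpha_{ab}$ and is the easy direction --- obtained exactly by exhibiting explicit compressed pairs along the Frenet-restricted leaves, so your description of the method is correct but your label ``lower bounds on H\"older exponents'' contradicts it. The paper's main effort in Theorem C is the \emph{upper} bound on $D_B$, i.e.\ proving $\Phi_{ab}$ \emph{is} H\"older at a definite exponent; that is where the loxodromy refinement, the product-structure reduction on $\Xi^2_\rho$, and the cluster analysis near $\xi^1(\partial\Gamma)$ live. This confusion is harmless for Theorem B itself, which needs only the lower bound plus Lee--Zhang plus the fact that $D_B$ induces the standard topology. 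On this last point: the upper bound of Theorem C does not vanish on the diagonal (since $\ell^{\mathrm{H}}_\rho > \ell^{\alpha_1}_\rho$ for $n>2$), so it cannot directly give $D_B^*(\rho_{i_j},\rho_\infty)\to 0$ as you suggest; the paper instead invokes that $D_B$ is a continuous metric inducing the standard topology (Remark 2 after Theorem D).
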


The metric $D_B^*$ is the first geometrically significant complete metric on ${\rm{Hit}}_n(S)$ for $n > 3$.
Pressure metrics are never complete. The metrics of \cite{carvajales2022thurstonsasymmetric} are expected to usually be incomplete, and Dai has shown that for $n = 3$ only the simple root metrics $d_{\text{Th}}^{\alpha_k}$ have complete symmetrizations \cite{dai2024email}.
Bouquet (bi-)coupling metrics have the further appeal of being ``purely geometric,'' in that they are defined---without entropy---in terms of geometric constructions.

\subsection{Bouquet Coupling}\label{ss-bouquet-couplings} We define the torus bouquets and associated metrics of our main construction here.
Our construction captures enough information to produce metrics, while being clean and independent of any apparent choices.
What seems essential to our results is that torus bouquets are \textit{sufficiently rich} and \textit{canonically parametrized} in terms of limit data.

We remark that implementing our perspective with the limit curve $\xi(\partial \Gamma) \subset \mathcal{F}(\bbR^n)$ seems much less tractable than with torus bouquets.
In particular, it is not clear that the objects obtained for $n > 3$ even separate points. See \S \ref{sss-full-flag-bad}.

\subsubsection{The Torus Bouquet} We begin with definitions. Let $\rho : \Gamma \to \text{PSL}_n(\bbR)$ for $n > 3$ be Hitchin with limit map $\xi = (\xi^1, ..., \xi^{n-1})$, let $\mathcal{K}_n = \{2, ..., \lfloor n/2 \rfloor\}$, and let $k_n = \lfloor n/2 \rfloor$ where $\lfloor \cdot \rfloor$ is the floor function.
For $k \in \mathcal{K}_n$, define $\Phi_\rho^k: \partial \Gamma^2 \to \RP^{n-1}$ by
\begin{align}
    \Phi_\rho^k (x,y) &= \begin{cases}
        \xi^k(x) \cap \xi^{n+1-k}(y) & x \neq y, \\
        \xi^1(x) & x = y,
    \end{cases}
\end{align} and define $T_\rho^k$ to be $\Phi_\rho^k(\partial \Gamma^2)$.
For $n = 4$ the torus $T_\rho^2$ is the boundary of the domain of discontinuity for $\rho$ in $\RP^3$ \cite{guichard2008convex}, and is rendered in Figure 1, Left.

We next define the \textit{$n$-bouquet} $B_n$ to be a gluing of $k_n -1$ copies $\partial \Gamma^{2}_l$ of $\partial \Gamma^{2}$ ($l = 2, ..., k_n)$ along their diagonals, i.e. by the equivalence relation $\sim$ specified by
\begin{align}
    p \sim q , \qquad p = (x,x) \in \partial \Gamma^{2}_{l}, \, q = (x,x) \in \partial \Gamma^{2}_m \, \text{ where } x\in \partial \Gamma \text{ and } l, m \in \mathcal{K}_n.
\end{align}
The bouquet $B_n$ carries a natural $\Gamma$-action.

Let $\Phi_\rho : B_n \to \RP^{n-1}$ be the map induced by $\{\Phi_\rho^k\}_{k=2,..., k_n}$ and define $\mathcal{B}_\rho = \Phi_\rho(B_n)$. Then $\Phi_\rho$ is a homeomorphism onto $\mathcal{B}_\rho$ (\S \ref{s-bouquet-coupling}). 
We call $\mathcal{B}_\rho$ the \textit{torus bouquet} of $\rho$.
See Figure \ref{fig-fuchsian-in-chart}.

\begin{figure}
    \centering
    \includegraphics[scale=0.53]{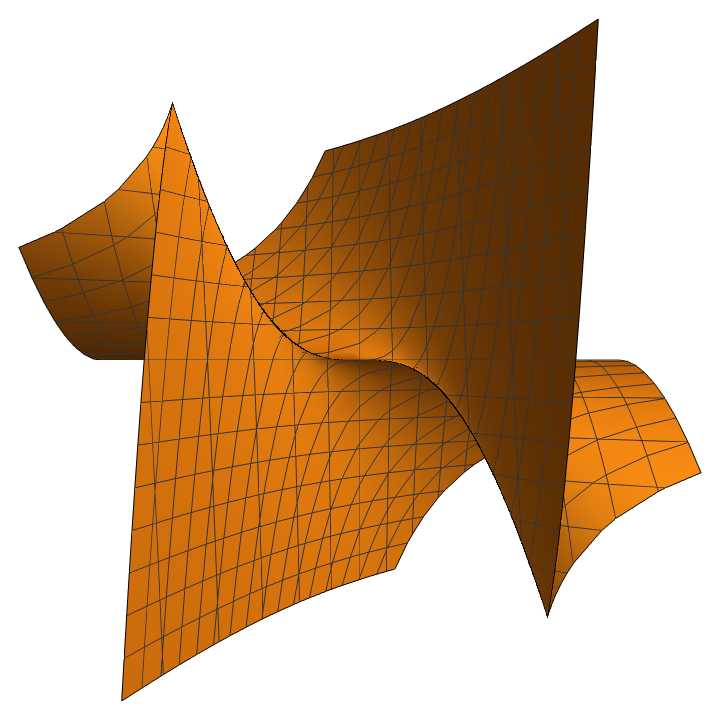} \hspace{1cm} \includegraphics[scale=0.68]{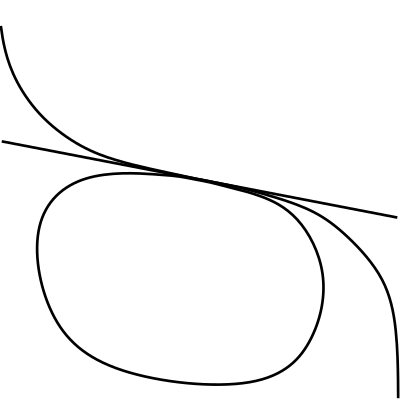}
    \caption{Left: the part of $\mathcal{B}_\rho$ for a $\text{PSL}_4(\bbR)$-Hitchin representation in a coordinate box in an affine chart for $\RP^3$. This is the ``Fuchsian'' example, i.e. is preserved by an irreducible $\text{PSL}_2(\bbR)$-subgroup of $\text{PSL}_4(\bbR)$. The singular curve is the image of $\xi^1_\rho$. Right: sketch of the intersection of $\mathcal{B}_\rho$ with a $3$-dimensional subspace of the form $\xi^4_\rho(x)$ for a $\text{PSL}_8(\bbR)$-Hitchin representation $\rho$.
    The point where the drawn curves intersect is $\xi^1_\rho(x)$.
    The term ``bouquet'' is motivated by how the tori $T_\rho^k$ come together near the image of $\xi^1_\rho.$}
    \label{fig-fuchsian-in-chart}
\end{figure}

\subsubsection{Bouquet Coupling Metrics} We now describe the associated construction of metrics.

\begin{definition}\label{def-bouquet-map}
For ${\rm{PSL}}_n(\bbR)$ Hitchin representations $\rho_a$ and $\rho_b$ $(n > 3)$, the {\rm{bouquet coupling map}} $\Phi_{ab}: \mathcal{B}_{\rho_b} \to \mathcal{B}_{\rho_a}$ is $\Phi_{\rho_a} \circ \Phi_{\rho_b}^{-1}$.
Let $\alpha_{ab}$ be the supremum of the $\alpha > 0$ so that $\Phi_{ab}$ is $\alpha$-H\"older and let $\beta_{ab}$ be the supremum of the $\alpha > 0$ so that $\Phi_{ab}$ is $\alpha$-bi-H\"older.
\end{definition}

\begin{definition}\label{def-bouquet-dist}
    The {\rm{bouquet coupling distance}} $D_B$ on ${\rm{Hit}}_n(S)$ ($n > 3$) is $D_B(\rho_a, \rho_b) = -\log \alpha_{ab}$.
    The {\rm{bouquet bi-coupling distance}} $D_B^*$ on ${\rm{Hit}}_n(S)$ is $D_B^* (\rho_a, \rho_b) = - \log \beta_{ab}$.
\end{definition}

This gives asymmetric and symmetric metrics that we bound in terms of eigenvalue data.
Notationally, for $\varphi \in \mathfrak{a}^*$ with $\mathfrak{a}$ a Cartan subalgebra of $\mathfrak{sl}_n(\bbR)$ the $\varphi$\textit{-length} $\ell^{\varphi}_\rho(\gamma)$ is $\varphi$ evaluated on the Jordan projection $\lambda(\rho(\gamma))$ of $\rho(\gamma)$ to the Weyl chamber $\mathfrak{a}^+$. See \S \ref{sss-lie-notations}.
We write $\alpha_i$ as the $i$-th simple root $(i = 1, ..., n-1)$ and $\text{H}$ as the Hilbert functional $\alpha_1 + \cdots + \alpha_{n-1}$.

\begin{theoremC}[Metricity, Bounds]\label{thm-metric-bounds}
    For $n > 3$ the bouquet coupling metric $D_B$ is an asymmetric metric on ${\rm{Hit}}_n(S)$, and $D_B^*$ is a metric on ${\rm{Hit}}_n(S)$. For $\rho_a, \rho_b \in {\rm{Hit}}_n(S)$, 
    \begin{align}
        \max_{k=1,...,n-1}d_{\rm{Th}}^{\alpha_k}(\rho_a, \rho_b) \leq D_B(\rho_a, \rho_b) \leq \max \left( \max_{k=1,...,n-1}d_{\rm{Th}}^{\alpha_k}(\rho_a, \rho_b), \, \log \sup_{\gamma \in \Gamma - \{e\}} \frac{\ell_{\rho_b}^{\rm{H}}(\gamma)}{\ell^{\alpha_1}_{\rho_a}(\gamma) } \right).
    \end{align}
\end{theoremC}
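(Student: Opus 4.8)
Write $d$ for the round metric on $\RP^{n-1}$. The plan is to compare the two parametrizations $\Phi_{\rho_a},\Phi_{\rho_b}:B_n\to\RP^{n-1}$ directly, so that $\alpha_{ab}$ and $\beta_{ab}$ are the optimal H\"older and bi-H\"older exponents of the identity of $B_n$ between the pulled-back metrics $\Phi_{\rho_b}^*d$ and $\Phi_{\rho_a}^*d$, and then to control each $\Phi_\rho^*d$ up to bi-Lipschitz equivalence in terms of the Jordan projections $\lambda(\rho(\gamma))$. Granting the bounds below, metricity is formal: $\Phi_{ab}$ is a homeomorphism onto a compact connected set with no isolated points, so it cannot be $\alpha$-H\"older for $\alpha>1$, hence $\alpha_{ab},\beta_{ab}\in(0,1]$ and $D_B,D_B^*\ge 0$; the relation $\Phi_{ac}=\Phi_{ab}\circ\Phi_{bc}$ together with super-multiplicativity of supremal (bi-)H\"older exponents under composition gives the triangle inequality exactly as for $d$ in the introduction; $D_B^*$ is symmetric since being $\alpha$-bi-H\"older is inversion-invariant; and the lower bound gives $d_{\mathrm{Th}}^{\alpha_1}(\rho_a,\rho_b)\le D_B(\rho_a,\rho_b)$, so $D_B(\rho_a,\rho_b)=0$ forces $\rho_a=\rho_b$ because $d_{\mathrm{Th}}^{\alpha_1}$ is an asymmetric metric \cite{carvajales2022thurstonsasymmetric} (likewise for $D_B^*$).

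\textbf{The lower bound.} For each $j\in\{1,\dots,n-1\}$ I would locate inside $\mathcal{B}_{\rho_b}$, with its restricted round metric, a subset bi-Lipschitz to the limit curve $\xi_{\rho_b}^j(\partial\Gamma)$: the singular curve realizes $j=1$, and on a torus $T_{\rho_b}^k$ $(k\in\mathcal{K}_n)$ the curves $x\mapsto\xi_{\rho_b}^k(x)\cap\xi_{\rho_b}^{n+1-k}(y_0)$ and $y\mapsto\xi_{\rho_b}^k(x_0)\cap\xi_{\rho_b}^{n+1-k}(y)$ are, after the local submersion ``intersect with a fixed complementary subspace'', bi-Lipschitz to $\xi_{\rho_b}^k$ and $\xi_{\rho_b}^{n+1-k}$ respectively; the transversality of the limit curve of a Hitchin representation to the fibers of this submersion \cite{labourie2004anosov} is what makes these restrict to embeddings. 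Running over $k\in\mathcal{K}_n$ this reaches every $j$ when $n$ is even. Since the optimal H\"older exponent only drops when the domain is enlarged, $\alpha_{ab}\le\alpha_{ab}^j$ in the notation of Definition~\ref{def-k-couple-map}, and Theorem~A rewrites this as $d_{\mathrm{Th}}^{\alpha_j}(\rho_a,\rho_b)\le D_B(\rho_a,\rho_b)$; taking the maximum over $j$ gives the left inequality. For odd $n$ the middle flag $\xi^{(n+1)/2}$ is not directly of this form, and I would treat that single exponent separately — e.g.\ via a diagonal-type subcurve $\{y=\sigma(x)\}$ of $T_{\rho_b}^{k_n}$ or via the jet estimate from the upper bound.

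\textbf{The upper bound.} Here one must show $\Phi_{ab}$ is $\alpha$-H\"older for $\alpha=\min\bigl(\min_k e^{-d_{\mathrm{Th}}^{\alpha_k}(\rho_a,\rho_b)},\,\inf_{\gamma\ne e}\ell_{\rho_a}^{\alpha_1}(\gamma)/\ell_{\rho_b}^{\mathrm H}(\gamma)\bigr)$, which is $e^{-(\text{right-hand side})}$. I would split pairs $p,q\in\mathcal{B}_{\rho_b}$ by whether or not they lie in a fixed collar of the singular curve. Outside the collar $\mathcal{B}_{\rho_b}$ is $C^1$, the $\Gamma$-action on the complement of the singular curve is organized by the cocompact action on the geodesic flow space, and the comparison reduces up to bi-Lipschitz changes to the identity between the $\rho_b$- and $\rho_a$-limit-curve parametrizations, whose regularity is the $\alpha_k$-stretch factor $\min_k e^{-d_{\mathrm{Th}}^{\alpha_k}}$ by Theorem~A. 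The collar carries the new content. Approximating its base point by an attracting fixed point $\gamma^+$ (periodic points are dense) and using $\rho(\gamma)$-equivariance in a bootstrap/self-similarity argument, one reduces to $p=\rho_b(\gamma)^M u$, $q=\rho_b(\gamma)^M v$ up to bounded multiplicative error, with $u,v$ in a fixed compact collar piece of lines near $\xi_{\rho_b}^1(\gamma^+)$. The key computation is then linear algebra for $\rho_b(\gamma)^M$ acting near its attracting fixed line on $\RP^{n-1}$: it distorts distances by a factor between $e^{-M\ell_{\rho_b}^{\mathrm H}(\gamma)}$ (smallest singular value, the direction of the bottom eigenline) and $e^{-M\ell_{\rho_b}^{\alpha_1}(\gamma)}$ (largest singular value, the direction of the second eigenline). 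Bounding $d(p,q)$ below by the first for $\rho_b$ and $d(\Phi_{ab}(p),\Phi_{ab}(q))$ above by the second for $\rho_a$ yields $d(\Phi_{ab}(p),\Phi_{ab}(q))\lesssim d(p,q)^{\ell_{\rho_a}^{\alpha_1}(\gamma)/\ell_{\rho_b}^{\mathrm H}(\gamma)}$; over all $\gamma$ this is the factor $\inf_\gamma\ell_{\rho_a}^{\alpha_1}(\gamma)/\ell_{\rho_b}^{\mathrm H}(\gamma)$, and combining the two regimes gives $\alpha_{ab}\ge\alpha$, i.e.\ the right inequality.

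\textbf{The main obstacle.} The only place with genuine work is the collar analysis: identifying the compact collar piece of lines and showing that every bouquet point within a controlled distance of the singular curve is $\rho(\gamma)^M$ of a point of it up to bounded multiplicative error, with constants \emph{uniform} over the singular curve — on which $\Gamma$ does not act cocompactly in $\RP^{n-1}$, so a naive fundamental-domain argument is unavailable — and in a way that also covers pairs $p,q$ lying on two different tori $T_{\rho_b}^k,T_{\rho_b}^{k'}$ that meet at one singular point. I expect the bi-Lipschitz model $\Phi_\rho^*d$ of the first paragraph, assembled from Jordan projection data, to be the right device for organizing this, with the remainder of the argument being bookkeeping.
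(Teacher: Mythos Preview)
Your overall architecture is right and matches the paper: metricity is formal from the bounds, the upper bound splits into ``away from the singular curve'' (handled by the $C^1$ product structure and Theorem~A/Tsouvalas) and a collar of $\xi^1(\partial\Gamma)$, and the collar computation ends in exactly the linear-algebra inequality you wrote, namely contraction between $e^{-\ell^{\mathrm H}}$ and $e^{-\ell^{\alpha_1}}$ in an adapted affine chart. Two points are worth flagging.

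\medskip

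\textbf{Lower bound.} Your bi-Lipschitz embedding idea is fine, but note you never need the ``missing'' index for odd $n$: since $\ell^{\alpha_k}_\rho(\gamma)=\ell^{\alpha_{n-k}}_\rho(\gamma^{-1})$ one has $d_{\mathrm{Th}}^{\alpha_k}=d_{\mathrm{Th}}^{\alpha_{n-k}}$, so it suffices to hit $k=1,\dots,k_n$. The paper does this a bit more directly than you propose: rather than transporting $\xi^k(\partial\Gamma)$ into the bouquet by a submersion, it uses the Frenet Restriction Lemma to view the leaf $\{\Phi_\rho^k(x,\gamma^-):x\in\partial\Gamma\}\subset\xi^{n-k+1}(\gamma^-)$ as the $\xi^1$-curve of a hyperconvex Frenet curve in $\bbP(\xi^{n-k+1}(\gamma^-))$, on which $\rho(\gamma)$ acts with top gap $\alpha_k$; a direct sequence computation at $\gamma^+$ then gives $\alpha_{ab}\le h_{ab}^k$.

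\medskip

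\textbf{Collar analysis: the genuine gap.} Your proposed mechanism --- approximate a base point by some $\gamma^+$ using density of periodic points and run a self-similarity bootstrap with $\rho(\gamma)^M$ --- does not, as you anticipate, give uniform constants: density alone controls neither the position of $\gamma^-$ nor the quality of loxodromy, so the ``fixed compact collar piece'' you want varies uncontrollably with the base point. The paper's resolution is different in kind. It does \emph{not} fix a $\gamma$ per base point; instead, for each pair $(p,q)$ it takes the four points of $\partial\Gamma$ specifying them, adds a far third point $z$ to the two edge points, and uses the uniform convergence group action on $\partial\Gamma^{(3)+}$ to find $\gamma_{pq}$ moving this triple into a fixed compact $L$. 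Abels--Margulis--Soifer (applied beforehand to enlarge $L$ by a finite set $F$) ensures $\rho_a(\gamma_{pq}),\rho_b(\gamma_{pq})$ are $(r,\varepsilon)$-loxodromic. The key new step is a \emph{loxodromy refinement}: if $\delta_0(\Phi_\rho(p),\Phi_\rho(q))$ is small while $(\gamma_{pq}p,\gamma_{pq}q)$ lands in a fixed compact, then $\ell^{\mathrm H}_\rho(\gamma_{pq})$ is forced large, hence $\rho_c(\gamma_{pq})$ is $(r,\varepsilon')$-\emph{bi}loxodromic for $\varepsilon'$ as small as one likes. This is what pins down $\gamma_{pq}^\pm$ relative to the cluster and makes the affine-chart metrics uniformly comparable to $\delta_0$; your final inequality is then exactly the paper's. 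So the missing idea is: replace ``self-similarity at a nearby periodic point'' by ``per-pair $\gamma_{pq}$ from cocompactness on triples $+$ AMS, upgraded to arbitrarily good biloxodromy by shrinking the scale of $(p,q)$.''
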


The interest in and import of a metric on a moduli space is closely correlated to the richness of the phenomena it measures.
Torus bouquets reflect interesting aspects of the geometry of Hitchin representations, while being fairly tractable due to their low-dimensionality.

More specifically, in \S \ref{s-bouquet-coupling} we show each torus $T_\rho^k$ has two canonical foliations by hyperconvex Frenet curves in projective spaces (see \S\ref{sss-hyperconvexity}).
The leaves of these foliations are curious objects.
In the case of $n =4$, one foliation is by the boundaries of leaves of the properly convex foliated projective structure associated to $\rho$ \cite{guichard2008convex}.
These domains exhibit have projective-geometric properties.
For instance, let $\mathfrak{C}$ be collection of projective classes of properly convex domains in $\RP^2$.
Recall $\mathfrak{C}$ is not $\text{T}_0$.
These domains give rise to the only known examples with more than one point of minimal nonempty closed subsets of $\mathfrak{C}$ \cite{nolte2023leaves}. 

The bounds in Theorem C do not match because $T_\rho^k$ need not be $C^1$ ($T_\rho^2$ is never $C^1$ for $n=4$ \cite{nolte2024foliationsSl4}). This breaks a crucial aspect of known sharp regularity computations in similar settings.
Obtaining even these bounds requires new ideas.
What is missing from an exact expression for $D_B$ is an understanding of the structure of $\Phi_{ab}$ near $\xi^1_{\rho_b}(\partial \Gamma)$. 
In any matter, the following asymmetric completeness property of $D_B$ implies Theorem B.

\begin{theoremD}\label{thm-completeness}
For $n > 3$ the bouquet coupling metric $D_B$ is forward proper, in the sense that for any $\rho_0 \in {\rm{Hit}}_n(S)$ and $R > 0$ the forward ball $\{\rho \in {\rm{Hit}}_n(S) \mid D_B(\rho_0, \rho) \leq R\}$ is compact.
\end{theoremD}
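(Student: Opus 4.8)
The plan is to reduce Theorem D, via the lower bound of Theorem C, to the statement that a family of Hitchin representations whose Hilbert length spectrum is bounded above (relative to a fixed $\rho_0$) is relatively compact---a fact mirroring the classical compactness of forward balls of Thurston's asymmetric metric on $\mathcal{T}(S)$. Fix $\rho_0 \in \text{Hit}_n(S)$, $R > 0$, and set $B_R = \{\rho : D_B(\rho_0, \rho) \leq R\}$. First I would record that $B_R$ is closed: $D_B(\rho_0, \cdot)$ is lower semicontinuous on $\text{Hit}_n(S)$, which is part of (or follows from the proof of) the metricity assertion in Theorem C. So it remains to show $B_R$ is relatively compact.

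Next I would carry out the reduction. If $D_B(\rho_0, \rho) \leq R$, then by the lower bound of Theorem C we get $d_{\text{Th}}^{\alpha_k}(\rho_0, \rho) \leq R$ for every $k$; unwinding the definition of the simple-root stretch metrics this says $\ell_\rho^{\alpha_k}(\gamma) \leq e^R \ell_{\rho_0}^{\alpha_k}(\gamma)$ for all $\gamma \in \Gamma - \{e\}$ and all $k$, and summing over $k$ (using $\mathrm{H} = \alpha_1 + \cdots + \alpha_{n-1}$ and linearity of $\varphi \mapsto \ell^\varphi$) yields the scalar estimate $\ell_\rho^{\mathrm{H}}(\gamma) \leq e^R \ell_{\rho_0}^{\mathrm{H}}(\gamma)$ for all $\gamma$. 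So for $\rho \in B_R$, every Hilbert length of $\rho$ is dominated by $e^R$ times its value at the fixed representation $\rho_0$.

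Then I would promote this pointwise domination to relative compactness using geodesic currents. Let $\mu_\rho$ be the Hilbert-length current of $\rho$, characterised by $i(\mu_\rho, \delta_\gamma) = \ell_\rho^{\mathrm{H}}(\gamma)$ for every closed geodesic $\gamma$; the assignment $\rho \mapsto \mu_\rho$ is a continuous proper embedding of $\text{Hit}_n(S)$ into the space $\mathcal{C}(S)$ of geodesic currents, and $\mu_{\rho_0}$ is filling. Since weighted closed geodesics are dense in $\mathcal{C}(S)$ and intersection number is continuous, the inequalities $i(\mu_\rho, \delta_\gamma) \leq e^R\, i(\mu_{\rho_0}, \delta_\gamma)$ upgrade to $i(\mu_\rho, \nu) \leq e^R\, i(\mu_{\rho_0}, \nu)$ for all $\nu \in \mathcal{C}(S)$; taking $\nu = \mu_{\rho_0}$ gives the uniform bound $i(\mu_\rho, \mu_{\rho_0}) \leq e^R\, i(\mu_{\rho_0}, \mu_{\rho_0})$ over $\rho \in B_R$. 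By Bonahon's criterion (bounded intersection with a fixed filling current), $\{\mu_\rho : \rho \in B_R\}$ is relatively compact in $\mathcal{C}(S)$, so by properness of $\rho \mapsto \mu_\rho$ the set $B_R$ is relatively compact in $\text{Hit}_n(S)$. Combined with closedness, $B_R$ is compact. (Theorem B then follows: $\{\rho : D_B^*(\rho_0, \rho) \leq R\} \subseteq B_R$ is a closed subset of a compact set, so $D_B^*$ is proper, hence complete.)

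The main obstacle is the input to this last step: that the Hilbert length spectrum parametrises $\text{Hit}_n(S)$ properly---equivalently that $\rho \mapsto \mu_\rho$ is proper---combined with the fact that we control the length spectrum only from above, which is forced by the asymmetry of $D_B$ and rules out any argument based on lower bounds for lengths. If one prefers to avoid invoking the current embedding, the same relative compactness can be obtained by a thick--thin argument modelled on the $\mathcal{T}(S)$ case: in Fenchel-Nielsen-type coordinates (e.g. Bonahon-Dreyer or Fock-Goncharov) relative to a pants decomposition $\mathcal{P}$, the upper bounds keep the eigenvalue gaps $\lambda_i(\rho(c))/\lambda_{i+1}(\rho(c))$ of the curves $c \in \mathcal{P}$ bounded, the collar lemma for Hitchin representations of Lee-Zhang prevents these gaps from degenerating to $1$ (a short curve would force a fixed transverse curve to have arbitrarily large first simple-root length, violating the upper bound), and divergence of a shear coordinate would likewise force the Hilbert length of a fixed transverse curve to diverge; either way no escape to infinity is possible.
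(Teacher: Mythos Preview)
Your argument is correct and shares its opening moves with the paper: both use the lower bound of Theorem C to obtain $\ell_\rho^{\alpha_k}(\gamma)\le e^R\ell_{\rho_0}^{\alpha_k}(\gamma)$ and sum to bound Hilbert lengths. The divergence is in how the Hilbert-length bound is converted to relative compactness. You pass through the machinery of geodesic currents---existence and fillingness of the Hilbert-length current (Martone--Zhang), properness of $\rho\mapsto\mu_\rho$, and Bonahon's compactness criterion---while the paper invokes a single black box: Lee--Zhang's Corollary 1.4, which says that $\rho\mapsto(\ell_\rho^{\mathrm H}(\gamma_1),\dots,\ell_\rho^{\mathrm H}(\gamma_k))$ is proper for any finite curve system $\mathcal C$ containing a pants decomposition and cutting $S$ into disks. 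Since the Hilbert-length bound holds curve-by-curve, bounding the finitely many coordinates $\ell_\rho^{\mathrm H}(\gamma_i)$ is immediate and compactness follows in one line. Your current-based route is conceptually pleasant and situates the result within a broader framework, but it imports more prerequisites (notably that $\mu_{\rho_0}$ is filling and that the current map is proper, the latter of which itself rests on something like Lee--Zhang); the paper's route is shorter and keeps the dependency graph minimal. Your sketched alternative via coordinates and the collar lemma is also more elaborate than needed---the paper does not touch thick--thin or shear parameters, since Lee--Zhang's properness statement already packages those considerations.
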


\begin{remarks} \,
    \begin{enumerate}
       \item  We prove slightly more what we stated in Theorems A and C: we show the coupling maps {\rm{attain}} the regularity given by the lower bounds shown for H\"older exponents. For instance, for $\rho_a,\rho_b$ Fuchsian, the map $\phi_{ab}$ is shown to be $\exp(-d_{{\rm{Th}}}(\rho_a, \rho_b))$-H\"older, rather than  only showing $\phi_{ab}$ is $\alpha'$-H\"older for all $\alpha' < \exp(-d_{{\rm{Th}}}(\rho_a, \rho_b))$.
    \item Theorem C and standard results imply $D_B$ is a continuous metric on ${\rm{Hit}}_n(S)$. The relevant facts are that limit cones of Hitchin representations are contained in the interior of $\mathfrak{a}^+$ and vary continuously (\cite{potrie2017eigenvalues-entropy} Cor. 4.9, \cite{sambarino2014hyperconvex}), and that $d_{\rm{Th}}^{\alpha_k}$ is Finsler \cite{carvajales2022thurstonsasymmetric}.
\item     The bounds of Theorem C applied to the Fuchsian locus $\mathscr{F}_n \subset \rm{Hit}_n(S)$ show that for each $n$ there is a constant $C_n$ so that the restriction of $D_B$ to $\mathscr{F}_n$ is $C_n$-multiplicatively comparable to Thurston's asymmetric metric.
    So $D_B$ is genuinely asymmetric.
\item     $D_B$ is only defined for $n > 3$.
    The appropriate analogues for $n = 2, 3$ seem to be to use $d_{\rm{Th}}$ for $n = 2$ and $d_{\rm{Th}}^{\alpha_1} = d_{\rm{Th}}^{\alpha_2}$ for $n = 3$.
    For $n = 3$, the asymmetric metric $d_{\rm{Th}}^{\alpha_1}$ has the same completeness properties as $D_B$ for reasons that fail in all higher ranks.
\item    Theorem A here and Theorem 1.3 in \cite{carvajales2022thurstonsasymmetric} show $d_k$ is an asymmetric metric on ${\rm{Hit}}_n(S)$, except in the case where $n > 3$ is even and $k =n/2$.
    \end{enumerate}
\end{remarks}
    
\subsubsection{Flag Coupling}\label{sss-full-flag-bad}
We describe our coupling construction for limit curves in $\mathcal{F}(\bbR^n)$, and the technical complications the construction faces.
As this is non-central, we are brisk.

We expect that there are canonical parametrizations of \textit{larger} subspaces than the torus bouquets. 
The model here is the work of Guichard-Wienhard in \S 4.1 of \cite{guichard2008convex} for $n = 4$ in $\RP^3$, where canonical parameterizations of each connected component of $\RP^3 - \mathcal{B}_\rho$ are produced.
We expect the involved maps to quickly become complicated and involve a choice from a finite collection of canonical maps \cite{nolte2024foliationsSl4}.

\begin{definition}
    For $\rho_a, \rho_b \in {\rm{Hit}}_n(S)$, the {\rm{flag coupling map}} $\phi_{ab}^\mathcal{F} : \xi_b(\partial \Gamma) \to \xi_a(\partial\Gamma)$ is $\xi_a \circ \xi_b^{-1}$.
    Let $\alpha_{ab}^\mathcal{F}$ be the supremum of the $\alpha > 0$ so $\phi_{ab}^\mathcal{F}$ is $\alpha$-H\"older, and $d_\mathcal{F}(\rho_a, \rho_b) = - \log \alpha_{ab}^\mathcal{F}$.
\end{definition}

A similar argument to Theorems A and C above, which we omit, shows that
    \begin{align}
        \alpha_{ab}^\mathcal{F} = \min\left(1, \inf_{\gamma \in \Gamma - \{e\}} \frac{\min\limits_{k=1,...,n-1} \ell_{\rho_a}^{\alpha_k}(\gamma) }{\min\limits_{k=1,...,n-1} \ell_{\rho_b}^{\alpha_k}(\gamma)}\right).\label{eq-flag-couple}
    \end{align}
For $n > 3$, even showing that $\alpha_{ab}^\mathcal{F}$ is strictly less than $1$ if $\rho_a, \rho_b \in \text{Hit}_n(S)$ are distinct seems challenging.
The closest results to this are in \cite{potrie2017eigenvalues-entropy} and \cite{bridgeman2015pressure}, in particular their application to prove that $d_{\text{Th}}^{\alpha_k}$ separates points in \cite{carvajales2022thurstonsasymmetric}.

\subsection{Related Work} Coupling maps for Fuchsian representations go back to Nielsen \cite{nielsen1927untersuchungen}, and the study of their H\"older regularity to Ahlfors \cite{ahlfors1954quasiconformal}
(see also \cite{mori1956absolute}).
Theorem A consists of two bounds on H\"older regularity.
The lower bound is the difficulty. The upper bound was is due to Sorvali \cite{sorvali1973dilatation}, and the equality to Tsouvalas \cite{tsouvalas2023holder}.
Papadapolous-Su have noted the relationship of Sorvali's work and $d_{\text{Th}}$, in case of surfaces with at least one puncture \cite{papadopoulusSu2014Thurston}.

Numerous regularity computations on domains and mappings associated to Anosov representations have been carried out previously \cite{benoist2004convexesI} \cite{guichard2005regularity} \cite{sambarino2016entropy} \cite{tsouvalas2023holder} \cite{zhangZimmer2019Regularity}.
A crucial point in obtaining optimal results in such computations is Abels-Margulis-Soifer's work \cite{abelsMargulisSoifer1995semigroups} on proximality and Benoist's further development of the direction \cite{benoist1997proprietes}.
This technique appears in \cite{guichard2005regularity} and \cite{tsouvalas2023holder}.

As mentioned above, Carvajales-Dai-Pozzetti-Wienhard have produced other extensions of Thurston's asymmetric metric in the setting of Anosov representations \cite{carvajales2022thurstonsasymmetric}.
The perspectives of \cite{carvajales2022thurstonsasymmetric} and this work are different for Teichm\"uller space.
Pursuing the different perspectives leads to constructions and metrics of different flavors.
Namely, the starting point of \cite{carvajales2022thurstonsasymmetric} is a formula of Thurston for $d_{\text{Th}}$ \cite{thurston1998minimal} while our work views $d_{\text{Th}}$ as a measurement of the H\"older distortion of boundary maps.
Neither perspective directly generalizes Thurston's original definition of his metric in terms of Lipschitz distortion of hyperbolic metrics.

Carvajales-Dai-Pozzetti-Wienhard have the insight that involving the dynamics of Anosov representations allows for a compellingly general theory to be developed.
This perspective sees aspects of Anosov representations that are important in constructions of metrics and less clear from other viewpoints.
For instance, though we reproduce simple root stretch metrics, it is not at all clear how to prove that they separate points without the perspective of \cite{carvajales2022thurstonsasymmetric}. 

The main contrasts of our perspective with that of \cite{carvajales2022thurstonsasymmetric}, and what we see as the appeals of our viewpoint, are that our construction is more directly geometric and uses the full data of the Hitchin condition.
In particular, we use the full hyperconvexity of Hitchin representations \cite{guichard2008composantes} \cite{labourie2004anosov}.
Our construction is necessarily much less general than \cite{carvajales2022thurstonsasymmetric}.

Simple root metrics are distinguished among the asymmetric metrics of \cite{carvajales2022thurstonsasymmetric} as not requiring an entropy normalization in their definition, as Potrie-Sambarino proved the relevant entropy is constant \cite{potrie2017eigenvalues-entropy}.
It is appealing that these distinguished metrics have a geometric realization.

\subsection{Outline}\label{ss-outline} In \S \ref{s-simple-stretch} we deduce Theorem A from work of Tsouvalas \cite{tsouvalas2023holder}.
We also present a distinct and simplified proof in the case of Thurston's asymmetric metric in \S \ref{s-teich-space-case}.
This is a model for later arguments, and accessibile without background on Anosov representations.

The remainder of the paper concerns the bouquet coupling metrics.
The main objective is the upper bound of Theorem C.
There are three main technical matters in its proof to navigate in our setting, beyond the ones seen in the study of limit curves of Hitchin representations in Grassmannians or boundaries of divisible domains (e.g. \cite{guichard2005dualite} \cite{tsouvalas2023holder}).

The first difficulty is that the torus bouquet $\mathcal{B}_\rho$ contains \textit{all but at most one} eigenline of $\rho(\gamma)$ for $\gamma \in \Gamma$.
Techniques from quantitative proximality are essential to the estimate, and are effective in analyzing neighborhoods of attracting fixed-points of linear maps.
Application these techniques to the parts of $\mathcal{B}_\rho$ containing intermediate-eigenvalue eigenlines is hopeless.

The second difficulty is that $\mathcal{B}_\rho$ is not globally $C^1$ in any sense.
In the irregular region, near $\xi^1(\partial \Gamma)$, this makes estimation of distances between points problematic.

The third difficulty is that the natural space that $\Gamma$ acts cocompactly on, the space $\partial \Gamma^{(3)+}$ of positively oriented triples in $\partial \Gamma$, is of a lower dimension than $\mathcal{B}_\rho \times \mathcal{B}_\rho$.
So one application of the uniform convergence group property of $\partial \Gamma$ does not move an arbitrary pair of points in $\mathcal{B}_\rho$ to compact family of pairs, and further analysis is needed.

What makes these difficulties tractable is that on compact subsets of $\mathcal{B}_\rho - \xi^1(\partial \Gamma)$ each torus $T_\rho^k$ is $C^1$ and has an approximate-product structure compatible with the coupling maps $\Phi_{ab}$ (\S \ref{s-bouquet-coupling}).
This reduces the regularity of the coupling maps on compact subsets of $\mathcal{B}_\rho - \xi^1(\partial \Gamma)$ to the regularity of the $k$-coupling maps $\phi_{ab}^k$ between limit curves in $k$-Grassmannians.
This circumvents the first challenge and reduces analysis to a small neighborhood of $\xi^1(\partial \Gamma)$.

So we are left to consider the regularity of coupling maps on a neighborhood of $\xi^1(\partial \Gamma)$ of a fixed small scale of our choosing.
With our freedom to set this scale we arrange for all points in $\partial\Gamma$ needed to specify a pair of points in $\mathcal{B}_\rho$ to be close together, and for numerous other technical properties to hold.
This particular structure allows for the third challenge to be addressed with an adapted application of the uniform convergence group property of $\Gamma$.

This technical perspective of this work has novelties.
In particular, our use of control over scales under consideration to improve quality constants does not appear in prior regularity computations discussed above.
Additionally, our use of the uniform convergence group property of $\Gamma$ in terms of the action of $\Gamma$ on triples in $\partial \Gamma$ is a rather efficient source of uniformity in our computations that seems likely to generalize well to more complicated settings.

\medskip 

\par \noindent \textbf{Acknowledgements.} The author thanks Jean-Marc Schlenker for inspiring his interest in questions related to this paper, Mike Wolf for his support and interest, Aaron Calderon for stimulating conversations, and Christopher Bishop for helpful comments on the exposition. This material is based on work supported by the National Science Foundation under Grants No. 1842494 and 2005551.

\section{Simple Root Stretch Metrics}\label{s-simple-stretch}

\subsection{Teichm\"uller Space}\label{s-teich-space-case}
We give a proof of Theorem A in the case of $\mathcal{T}(S)$ here, before deducing the general case from \cite{tsouvalas2023holder}.
This is a model of the core of the proof of Theorem C.

\subsubsection{Proximality}
We briefly set notation and recall the standard results on proximal elements of $\text{PSL}_n(\bbR)$ that are essential to the main estimate.
Our notation follows \cite{guichard2005regularity}.

A linear map $g \in \text{PSL}_n(\bbR)$ is \textit{proximal} if $g$ fixes a line $x^+_g$ and transverse hyperplane $X^<_g$ so that $\lim_{n\to\infty} g^n x = x^+_g$ for all $x \notin X^<_g$.
With respect to a background Riemannian metric $\delta$ on $\RP^{n-1}$, we say that for $r > \varepsilon > 0$ the map $g$ is \textit{$(r,\varepsilon)$-proximal} if \begin{align*}
    \delta(x_g^+, X^<_g) \geq r, \qquad g(\mathbb{RP}^{n-1} - N_\varepsilon(X^<_g)) \subset N_\varepsilon(x^+_g), \qquad g|_{\RP^{n-1} - N_\varepsilon(X^<_g)} \text { is $\varepsilon$-Lipschitz.}
\end{align*} where $N_\varepsilon(S)$ is the $\varepsilon$-neighborhood of a set $S$ throughout the paper.
We say $g$ is $(r,\varepsilon)$\textit{-biproximal} if both $g$ and $g^{-1}$ are $(r,\varepsilon)$-proximal.
The consequence of \cite{abelsMargulisSoifer1995semigroups} needed here is:

\begin{fact}[\cite{abelsMargulisSoifer1995semigroups} 5.17, \cite{benoist1997proprietes} \S 3.1] \label{lemma-teich-proximality}
    Let $\rho_a, \rho_b: \Gamma \to {\rm{PSL}}_2(\bbR)$ be Fuchsian. Then for any $r> \varepsilon > 0$ sufficiently small there is a finite set $F \subset \Gamma$ so that for all $\gamma \in \Gamma$, there is an $\eta \in F$ so $\rho_a(\eta \gamma)$ and $\rho_b(\eta\gamma)$ are $(r,\varepsilon)$-proximal.
\end{fact}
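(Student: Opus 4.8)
\textbf{Proof plan for Fact \ref{lemma-teich-proximality}.}

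The plan is to derive this from the Abels--Margulis--Soifer theorem on generating sets of proximal elements together with the standard fact that Fuchsian representations into $\text{PSL}_2(\bbR)$ have Zariski-dense image in a group all of whose elements (except the identity) are biproximal. First I would recall the precise statement we invoke from \cite{abelsMargulisSoifer1995semigroups}: for a finitely generated subgroup of $\text{PGL}_n(\bbR)$ that is \emph{strongly irreducible and proximal} (equivalently, whose Zariski closure acts irreducibly and contains a proximal element), and for any $r > \varepsilon > 0$ with $r$ small enough, there is a finite subset $F_0$ of the group such that every group element $g$ can be multiplied by some $\eta_0 \in F_0$ to make $\eta_0 g$ be $(r,\varepsilon)$-proximal. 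The content of \cite[\S3.1]{benoist1997proprietes} upgrades and packages this in exactly the $(r,\varepsilon)$-quantitative form stated. Since we want the conclusion \emph{simultaneously} for the two representations $\rho_a$ and $\rho_b$, I would first set things up so a single finite set works for both.

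The key step is to apply this to the diagonal. Consider $\rho_a \oplus \rho_b : \Gamma \to \text{PSL}_2(\bbR) \times \text{PSL}_2(\bbR)$, or rather its composition with the standard embedding into $\text{PGL}_4(\bbR)$ (acting on $\bbR^2 \oplus \bbR^2$). A hyperbolic element $\gamma \in \Gamma$ maps to a pair of hyperbolic elements, and $\rho_a(\gamma) \oplus \rho_b(\gamma)$ has eigenvalues $\{\lambda^{\pm 1}_a(\gamma), \lambda^{\pm 1}_b(\gamma)\}$ with $\lambda_a(\gamma), \lambda_b(\gamma) > 1$; in general these four moduli are distinct, so $\rho_a(\gamma)\oplus\rho_b(\gamma)$ is proximal on $\RP^3$ (unique dominant eigenvalue) — and similarly biproximal. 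Thus $(\rho_a\oplus\rho_b)(\Gamma)$ contains a proximal element. It is also strongly irreducible provided $\rho_a$ and $\rho_b$ are not conjugate in a way that identifies the two $\bbR^2$-factors; if they happen to be conjugate we may simply take $\rho_b = \rho_a$ on the factor and argue with one copy, or perturb within the conjugacy class — in any case the generic situation and the degenerate $\rho_a = \rho_b$ situation are both covered, the latter trivially since then one invocation of AMS for $\rho_a$ alone suffices. Then Abels--Margulis--Soifer--Benoist applied to $(\rho_a \oplus \rho_b)(\Gamma) \subset \text{PGL}_4(\bbR)$ produces, for $r > \varepsilon > 0$ small, a finite $F \subset \Gamma$ so that for each $\gamma$ there is $\eta \in F$ with $(\rho_a \oplus \rho_b)(\eta\gamma)$ being $(r', \varepsilon')$-biproximal in $\RP^3$ for suitable comparable constants.

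The final step is to transfer $(r',\varepsilon')$-biproximality of the direct sum back to $(r,\varepsilon)$-proximality of each factor. This is where a little care is needed: being $(r,\varepsilon)$-proximal is a statement about the dynamics of $\rho_c(\eta\gamma)$ on $\RP^1$, while the AMS output controls $\RP^3$. The point is that the attracting line and repelling hyperplane of $(\rho_a\oplus\rho_b)(\eta\gamma)$ in $\bbR^4$ respect the direct sum decomposition only up to the ordering of eigenvalue moduli; the dominant eigenline lies in whichever $\bbR^2$-factor carries the larger top eigenvalue. So an elementary linear-algebra argument — comparing the four eigenvalue moduli and using that gaps between consecutive ones are controlled by the $\RP^3$-proximality constants — shows that each of $\rho_a(\eta\gamma)$, $\rho_b(\eta\gamma)$ is itself $(r'', \varepsilon'')$-biproximal on $\RP^1$ with constants that, after shrinking, can be taken to be the prescribed $(r,\varepsilon)$. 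Concretely one bounds the singular-value gap of each $2\times 2$ block below by the singular-value gap of the $4 \times 4$ matrix, and the Lipschitz/contraction estimates on $\RP^1$ follow from those on $\RP^3$ by restriction to the invariant $\RP^1$'s. The main obstacle I anticipate is precisely this last bookkeeping: making sure that a \emph{single} uniform pair $(r,\varepsilon)$ and a \emph{single} finite set $F$ work for both representations at once and that the constants degrade in a controlled way — the direct-sum trick is what makes this clean, but one must check the strong irreducibility hypothesis honestly (or route around it when $\rho_a,\rho_b$ are conjugate) and verify that the eigenvalue-separation needed for $\RP^3$-proximality of $\rho_a(\gamma)\oplus\rho_b(\gamma)$ does not fail for the finitely many $\gamma$ that matter, which is handled by absorbing any bad cases into the finite set $F$.
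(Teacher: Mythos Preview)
The paper does not give its own proof of this statement; it is recorded as a \emph{Fact} and cited directly to \cite{abelsMargulisSoifer1995semigroups} and \cite{benoist1997proprietes}. So there is no argument in the paper to compare against, only the literature.

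Your proposed route has two genuine gaps.

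First, the direct sum $\rho_a \oplus \rho_b : \Gamma \to \text{PGL}_4(\bbR)$ is \emph{never} irreducible, let alone strongly irreducible: it always preserves the two $\bbR^2$ summands, regardless of whether $\rho_a$ and $\rho_b$ are conjugate. Your assertion that strong irreducibility holds for non-conjugate $\rho_a,\rho_b$ is false, so the $\text{PGL}_n$-form of the Abels--Margulis--Soifer theorem you invoke does not apply to $(\rho_a\oplus\rho_b)(\Gamma)$.

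Second, even granting $(r',\varepsilon')$-biproximality of $(\rho_a\oplus\rho_b)(\eta\gamma)$ on $\RP^3$, the transfer step you describe fails. Biproximality on $\RP^3$ controls only the top and bottom eigenvalue gaps of the $4\times 4$ matrix. With eigenvalues $\lambda_a > \lambda_b > \lambda_b^{-1} > \lambda_a^{-1}$, both of these gaps equal $\lambda_a/\lambda_b$; the middle gap $\lambda_b^2$, which governs proximality of $\rho_b(\eta\gamma)$ on $\RP^1$, is entirely unconstrained. Your claim that ``one bounds the singular-value gap of each $2\times 2$ block below by the singular-value gap of the $4\times 4$ matrix'' is simply wrong: take $\text{diag}(10^3,10^{-3},1.01,1.01^{-1})$.

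The argument in the cited references runs instead through Benoist's framework for Zariski-dense subgroups of semisimple real Lie groups. One shows $(\rho_a,\rho_b)(\Gamma)$ is Zariski-dense in $G=\text{PSL}_2(\bbR)\times\text{PSL}_2(\bbR)$ (or, if $\rho_a,\rho_b$ are conjugate, in a diagonal $\text{PSL}_2(\bbR)$, where the single-representation statement already suffices). Benoist's $(r,\varepsilon)$-proximality theorem then applies to the action on the full flag variety $G/B \cong \RP^1\times\RP^1$, and $(r,\varepsilon)$-proximality there is by definition simultaneous $(r,\varepsilon)$-proximality on each $\RP^1$ factor. If you want to stay in a $\text{PGL}_N$-picture, you would need $(r,\varepsilon)$-\emph{loxodromy} in $\text{PGL}_4$ (controlling all exterior powers and hence all consecutive gaps), but establishing that still requires the semisimple-group version to get around the reducibility.
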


\subsubsection{Regularity Computation}\label{ss-computation-teich} 
The relevant notations to this computation are as follows. For Fuchsian representations $\rho_a, \rho_b$ and $\gamma \in \Gamma - \{e\}$, define $r_{ab}(\gamma) = \ell_{\rho_a}(\gamma)/\ell_{\rho_b}(\gamma)$, and $h_{ab} = \inf_{\gamma \in \Gamma - \{e\}} r_{ab}(\gamma)$.
In \cite{thurston1998minimal}, Thurston proves $d_{\text{Th}}(\rho_a, \rho_b) = \log \, \sup_{\gamma \in \Gamma - \{e\}} r_{ab}^{-1}$.
Denote the limit maps $\partial \Gamma\to \partial \bbH$ of $\rho_a$ and $\rho_b$ by $\xi_a$ and $\xi_b$, respectively.
For $g \in \text{PSL}_2(\bbR)$ let $\ell(g)$ be translation length on $\bbH$.

\begin{theorem}[Coupling is Stretch]
    $d_1 = d_{{\rm{Th}}}$ and $\phi_{ab}$ attains its optimal H\"older exponent.
\end{theorem}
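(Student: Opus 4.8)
The plan is to establish the identity $d_1(\rho_a,\rho_b) = d_{\mathrm{Th}}(\rho_a,\rho_b)$ by proving the sharp two-sided estimate $\alpha_{ab} = \min(1, h_{ab})$ on the optimal Hölder exponent of $\phi_{ab} = \xi_a \circ \xi_b^{-1}$, where $h_{ab} = \inf_\gamma \ell_{\rho_a}(\gamma)/\ell_{\rho_b}(\gamma)$; combined with Thurston's formula $d_{\mathrm{Th}}(\rho_a,\rho_b) = \log \sup_\gamma r_{ab}^{-1} = -\log h_{ab}$ this is exactly the claim, including the assertion that the exponent is attained. Here one uses throughout that $h_{ab} \le 1$ with equality only when $\rho_a = \rho_b$ (a standard consequence of Thurston's work, or the fact that $d_{\mathrm{Th}}$ is a genuine asymmetric metric), so that $\min(1,h_{ab}) = h_{ab}$ for distinct representations and the $\min$ only matters on the diagonal.

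\textbf{Upper bound.} First I would show $\alpha_{ab} \le h_{ab}$, i.e. that $\phi_{ab}$ cannot be $\alpha$-Hölder for $\alpha > h_{ab}$. This is the "easy" direction and is local near fixed points of a single group element: fix $\gamma \in \Gamma - \{e\}$ with attracting fixed point $p^+ = \xi_b(\gamma^+) \in \partial\bbH$ for $\rho_b(\gamma)$; then $\phi_{ab}$ conjugates $\rho_b(\gamma)$ to $\rho_a(\gamma)$, which near its attracting fixed point $\phi_{ab}(p^+)$ contracts at rate governed by the derivative $e^{-\ell_{\rho_a}(\gamma)}$ (on $\partial\bbH \cong S^1$, the multiplier of a hyperbolic element of translation length $\ell$ is $e^{-\ell}$). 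Iterating $\phi_{ab} \circ \rho_b(\gamma)^{-n} = \rho_a(\gamma)^{-n}\circ \phi_{ab}$ and comparing contraction rates of $\rho_b(\gamma)$ and $\rho_a(\gamma)$ at corresponding fixed points forces any Hölder exponent $\alpha$ of $\phi_{ab}$ to satisfy $\alpha \le \ell_{\rho_a}(\gamma)/\ell_{\rho_b}(\gamma) = r_{ab}(\gamma)$; taking the infimum over $\gamma$ gives $\alpha_{ab} \le h_{ab}$. This step is essentially Sorvali's argument and I would present it briefly.

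\textbf{Lower bound.} The substance is showing $\phi_{ab}$ \emph{is} $h_{ab}$-Hölder. Given two nearby points $x, y \in \xi_b(\partial\Gamma) = \partial\bbH$, I want to estimate $\delta(\phi_{ab}(x),\phi_{ab}(y))$ in terms of $\delta(x,y)^{h_{ab}}$. The idea is to use the uniform proximality input, Fact \ref{lemma-teich-proximality}: choose $r > \varepsilon > 0$ small; there is a finite set $F \subset \Gamma$ such that for the geodesic "separating" or "capturing" the pair $\{x,y\}$ at the appropriate scale — more precisely, for the element $\gamma \in \Gamma$ whose axis shadows the configuration — one can postcompose with some $\eta \in F$ so that both $\rho_a(\eta\gamma)$ and $\rho_b(\eta\gamma)$ are $(r,\varepsilon)$-biproximal. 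Running $\rho_b(\eta\gamma)^{-1}$ (or a suitable power) expands the pair $\{x,y\}$ to a pair at a definite scale bounded away from the repelling hyperplane, where $\phi_{ab}$ is uniformly bi-Lipschitz (being a homeomorphism of $S^1$ restricted to a compact "good" region, with constants depending only on $F$ and the fixed scale, hence uniform); then $\rho_a(\eta\gamma)$ recontracts on the target side. Quantifying the expansion/contraction: $\rho_b$'s element expands $\delta(x,y)$ by a factor comparable to $e^{N\ell_{\rho_b}(\gamma)}$ to reach unit scale (so $N \approx -\log\delta(x,y)/\ell_{\rho_b}(\gamma)$), while the corresponding contraction on the $\rho_a$ side is by $e^{-N\ell_{\rho_a}(\gamma)}$, yielding $\delta(\phi_{ab}(x),\phi_{ab}(y)) \lesssim e^{-N\ell_{\rho_a}(\gamma)} = (e^{-N\ell_{\rho_b}(\gamma)})^{r_{ab}(\gamma)} \lesssim \delta(x,y)^{r_{ab}(\gamma)} \le \delta(x,y)^{h_{ab}}$, the last step because $\delta(x,y)$ is small and $r_{ab}(\gamma) \ge h_{ab}$. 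The finitely many correction factors from $\eta \in F$ and the fixed-scale bi-Lipschitz constant contribute only a uniform multiplicative constant, which is harmless for the Hölder property (and can be absorbed since we may restrict to $\delta(x,y)$ below a fixed threshold, handling the complementary compact range trivially).

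\textbf{Main obstacle.} The crux — and the place I would spend the most care — is the coupling between the geometry of the pair $\{x,y\}$ and the choice of group element $\gamma$: I need that for every sufficiently close pair there is a $\gamma$ whose axis "realizes" their separation at scale $1$ after finitely many steps, with the number of steps $N$ accurately controlling $\delta(x,y)$ via $\ell_{\rho_b}(\gamma)$, and simultaneously that the same $\gamma$ (up to the finite ambiguity $F$) is $(r,\varepsilon)$-biproximal for \emph{both} $\rho_a$ and $\rho_b$. This is where Abels–Margulis–Soifer / Benoist (Fact \ref{lemma-teich-proximality}) is indispensable: it is precisely the uniform simultaneous proximality over all of $\Gamma$ that lets the same $\eta\gamma$ work on both sides with scale-independent constants. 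I would set this up using the orbit map $\Gamma \to \bbH$ and the fact that $\Gamma$ acts as a uniform convergence group on $\partial\bbH$ (equivalently, cocompactly on distinct pairs or on $\partial\bbH^{(3)+}$), so that after moving by a group element any pair is sent into a fixed compact family of "well-separated" pairs; then the analysis of $\phi_{ab}$ reduces to that compact family where it is automatically bi-Lipschitz. The delicate bookkeeping is making sure the multiplicative and additive errors in "$N \approx -\log\delta(x,y)/\ell_{\rho_b}(\gamma)$" are genuinely uniform in the pair, which is exactly the kind of scale control the introduction flags as a model for the Theorem C argument.
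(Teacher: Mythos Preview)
Your proposal is correct and uses the same two key inputs as the paper (Fact~\ref{lemma-teich-proximality} and cocompactness of the $\Gamma$-action on $\partial\Gamma^{(3)+}$), but your framing diverges from the paper's in a way worth noting. You speak of choosing a $\gamma$ ``whose axis shadows the configuration'' and then iterating $(\eta\gamma)^N$ to reach unit scale; the paper instead uses the convergence group to produce a \emph{single} element $\gamma_{xy}$ with $\gamma_{xy}(x,y,z)$ in a fixed compact $L = FK$, and does a one-step computation in upper-half-plane charts placing $\gamma_{xy}^-$ at $0$ and $\gamma_{xy}^+$ at $\infty$---no iteration count $N$ appears. What you flag as ``delicate bookkeeping'' is made precise in the paper as the \emph{Proximality Refinement} (Claim~\ref{claim-teich-better-biproximality}): restricting to close enough $(x,y)$ forces $\ell(\gamma_{xy})$ large, which upgrades the $(r,\varepsilon)$-proximality supplied by Fact~\ref{lemma-teich-proximality} to $(r,\varepsilon')$-\emph{biproximality} for arbitrarily small $\varepsilon'$. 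Note that Fact~\ref{lemma-teich-proximality} as stated gives only proximality, so your direct claim of biproximality from AMS skips this step; it is exactly this refinement that pins $x,y$ near $\gamma_{xy}^-$ and keeps $x_0,y_0,\gamma_{xy}x_f$ away from $\gamma_{xy}^+$, yielding the uniform chart comparisons needed for the final estimate. Your iteration framing can be made to work but is closer to the approach of \cite{tsouvalas2023holder}, from which the paper explicitly distinguishes its technique.
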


\begin{proof}
    We first show $d_1 \geq d_{\text{Th}}$. Let $\gamma \in \Gamma - \{e\}$ and $\alpha > r_{ab}(\gamma)$.
    Work in models of $\bbH$ with attractor $\xi_c(\gamma^+)$ at $0$ and $\xi_c(\gamma^-)$ at $\infty$ for $c \in \{a,b\}$.
    The induced metric on $\RP^1 -\{\infty\}$ is multiplicatively comparable to the reference metric on $\RP^1$ in the complement of any fixed neighborhood of $\infty$.
    Consider the sequence $(x_n, y_n) = (e^{-n\ell_{\rho_b}(\gamma)},0)$.
    Then
    \begin{align*}
        \frac{d(\phi_{ab}(x_n), \phi_{ab}(y_n))}{d(x_n,y_n)^\alpha} &= \frac{d(\rho_a(\gamma)^n\phi_{ab}(1), \rho_a(\gamma)^{n}\phi_{ab}(0))}{d(\rho_b(\gamma)^{n}1,\rho_b(\gamma)^{n}0)^\alpha} = e^{n (\alpha \ell_{\rho_b}(\gamma) - \ell_{\rho_a}(\gamma))} {d(\phi_{ab}(1), \phi_{ab}(0))}.
    \end{align*} As $\alpha > r_{ab}(\gamma)$, $\exp(n(\alpha \ell_b(\gamma) - \ell_a(\gamma))$ grows unboundedly.
    So $\phi_{ab}$ is not $\alpha$-H\"older.
    Hence $$d_1(\rho_a, \rho_b) = -\log \alpha_{ab} \geq -\log \left( \inf_{\gamma \in \Gamma - \{e\}} \frac{\ell_{\rho_a}(\gamma)}{\ell_{\rho_b(\gamma)}}\right) = \log \left( \sup_{\gamma \in \Gamma - \{e\}}\frac{\ell_{\rho_b}(\gamma)}{\ell_{\rho_a}(\gamma)} \right) = d_{\text{Th}}(\rho_a,\rho_b).$$

    We prove the other inequality by showing the above behavior dominates the critical cases for the H\"older condition.
    The key step is a use of Fact \ref{lemma-teich-proximality} to separate attracting and repelling fixed-points of maps under consideration in an application of the uniform convergence group property of $\Gamma$.
    The other matter to control is that the metrics on $\RP^1 - \{\infty\}$ induced from upper half plane models of $\bbH$ have poor comparability near $\infty$.
    
    Fix reference metrics $\delta$ on $\partial \bbH$ from the Poincar\'e model and $d_{\partial\Gamma}$ on $\partial \Gamma$.
    We will exploit that only nearby points are relevant to the H\"older condition since $\partial \Gamma$ is compact.
    Let $r > \varepsilon > 0$ be sufficiently small and fixed.
    By Fact \ref{lemma-teich-proximality}, there is a finite set $F \subset \Gamma$ so that for all $\gamma \in \Gamma$ there is an $\eta \in F$ so that $\rho_a(\eta\gamma)$ and $\rho_b(\eta\gamma)$ are $(r,\varepsilon)$-proximal.

    As $\Gamma$ is a uniform convergence group, there is a compact subset $K$ of the collection $\partial \Gamma^{(3)+}$ of positiviely oriented triples of elements of $\partial \Gamma$ so that for all $t \in \partial \Gamma^{(3)+}$ there is a $\gamma \in \Gamma$ so $\gamma t \in K$.
    Replacing $K$ with the compact set $L = F K$, with $$\Gamma_{(r,\varepsilon)}^{a,b} = \{ \gamma \in \Gamma \mid \rho_a(\gamma) \text{ and }\rho_b(\gamma) \text{ are } (r,\varepsilon) \text{-proximal}\},$$ for every $t \in \partial \Gamma^{(3)+}$ there is a $\gamma \in \Gamma_{(r,\varepsilon)}^{a,b}$ so $\gamma t \in L$.
    Compactness of $L$ ensures that there is a $\nu > 0$ so that for all $(x,y,z) \in L$ each of $\delta(\xi_a(x), \xi_a(y)), \delta(\xi_a(x),\xi_a(z))$, and $\delta(\xi_a(y), \xi_a(z))$ and their analogues for $\xi_b$ are at least $\nu$.

    We next use our freedom to choose the scale of points we consider to reduce the $\varepsilon$ for which we must consider $(r,\varepsilon)$-proximal maps \textit{while still working with the same compact $L$}.

    So let $x,y \in \partial \Gamma$ be distinct. We take $d(\xi_a(x),\xi_a(y))$ and $d(\xi_b(x), \xi_b(y))$ to be below a fixed small scale, that we shall describe.
    The first requirement is that $d_{\partial \Gamma}(x,y) < \text{diam}(\partial \Gamma)/3$.

    After potentially interchanging $x$ and $y$, we may choose $z \in \partial \Gamma$ so $(x,y,z)$ is positively oriented and both $d_{\partial \Gamma}(x,z), d_{\partial\Gamma}(y,z)$ are at least $\text{diam}(\partial \Gamma)/3$. 
    Then both $\xi_a(z)$ and $\xi_b(z)$ are separated by a $(x,y)$-independent constant $r_0$ from $\xi_a(x),\xi_a(y)$ and $\xi_b(x),\xi_b(y)$, respectively. 
    Now there exists a point $(x_0, y_0,z_0) \in L$ and $\gamma_{xy} \in \Gamma_{(r,\varepsilon)}^{a,b}$ so that $\gamma_{xy}(x,y,z) = (x_0,y_0,z_0)$.

\begin{claim}[Proximality Refinement]\label{claim-teich-better-biproximality}
    For all $\varepsilon' > 0$ there is a $\tau(\varepsilon') > 0$ so that for all distinct $x,y$ in $\partial \Gamma$ with $d_{\partial \Gamma}(x,y) < \tau(\varepsilon')$ both $\rho_a(\gamma_{xy})$ and $\rho_b(\gamma_{xy})$ are $(r, \varepsilon')$-biproximal.
\end{claim}

\begin{proof}
    Fix $r > 0$.
    Straightforward computation in the Poincar\'e model of $\bbH$ shows the following two observations.\footnote{A detailed argument in greater generality is given in \S \ref{ss-loxodromy-refinement}.}
    First, for all $\varepsilon' > 0$ there is an $T < \infty$ so if $g \in \text{PSL}_2(\bbR)$ is proximal, has $\delta(x_g^+, X_g^<) \geq r$, and has $\ell(g) > T$ then $g$ is $(r,\varepsilon')$-proximal.
    In fact, symmetry of this condition under inversion allows a conclusion of $(r,\varepsilon')$-biproximality.
    Second, for all $T > 0$ there is a $B = B(T,r)$ so for every proximal $g \in \text{PSL}_2(\bbR)$ with $\delta(x_g^+, X_g^<) \geq r$ and $\ell(g) \leq T$ the action of $g$ on $\partial \bbH$ is $B$-bilipschitz.
    
    The optimal bilipschitz factor of $\rho_c(\gamma_{xy})$ $(c \in \{a,b\})$ is bounded below by $\nu/\delta(\xi_c(x), \xi_c(y))$.
    So take $\tau(\varepsilon')$ so that if $d_{\partial\Gamma}(x,y) < \tau(\varepsilon')$ then both $\delta(\xi_a(x),\xi_a(y))$ and $\delta(\xi_b(x),\xi_b(y))$ are less than a constant $\eta(\varepsilon')$ so small that the bilipshitz factor lower bound of $\nu/\eta(\varepsilon')$ for the action of $\rho_c(\gamma_{xy})$ on $\partial \bbH$ implies $(r, \varepsilon')$-biproximality of $\rho_c(\gamma_{xy})$ ($c \in \{a,b\}$).
\end{proof}

Now let $\varepsilon' < \text{min}(r_0/6,\nu/6)$. We restrict our attention further to $(x,y)$ with $\delta(\xi_a(x),\xi_a(y))$ and $\delta(\xi_b(x),\xi_b(y))$ less than $\text{min}(\tau(\varepsilon'), \varepsilon'/3)$, where $\tau(\varepsilon')$ is provided by the preceding claim.

So fix such a pair $(x,y)$ and fix $c \in \{a,b\}$. 
We record how the $(r,\varepsilon')$-proximality of $\rho_c(\gamma_{xy})$ constrains the location of $\gamma_{xy}^+$ and $\gamma_{xy}^-$ relative to the other points.
We first note that since $\rho_c(\gamma_{xy})$ is $(r,\varepsilon')$-proximal, at least one of $\xi_c(x)$ and $\xi_c(y)$ must be in $N_{\varepsilon'}(\xi_c(\gamma_{xy}^-))$: otherwise both $\xi_c(\gamma_{xy} x)$ and $\xi_c(\gamma_{xy} y)$ would be in $N_{\varepsilon'}(\xi_c(\gamma_{xy}^+))$ which is impossible because $\varepsilon' < \nu$.
So both $\xi_c(x)$ and $\xi_c(y)$ are contained in $N_{2\varepsilon'}(y)$.

Observe also that since $\delta(\xi_c(x), \xi_c(z)) > 6\varepsilon'$ that $\xi_c(z) \notin N_{\varepsilon'}(\gamma_{xy}^-)$.
So $\rho_c(\gamma_{xy} z) \in N_{\varepsilon'}(\gamma_{xy}^+)$ by $(r,\varepsilon')$-proximality of $\rho_c(\gamma_{xy})$.
This implies $\xi_c(\gamma_{xy}x)$ and $\xi_{c}(\gamma_{xy}y)$ are separated from $\gamma_{xy}^+$ by at least $\nu/2$.
We finally remark that for at least one $w_c \in \{x,y\}$ the point $\xi_c(\gamma_{xy} w_c)$ is separated from $\xi_c(\gamma_{xy}^-)$ by at least $\nu/2$, by construction of $\nu$.

Now, for each $c \in \{a,b\}$ work in an upper-half-space model of $\bbH$, normalized so that $\xi_c(\gamma_{xy}^-)$ is at $0$, $\xi_c(\gamma_{xy}^+)$ is at $\infty$, and $\xi_c(\gamma_{xy} w_c)$ is at $\pm 1$, with sign determined by the orientation of $(\gamma_{xy}^-, \gamma_{xy}w_c, \gamma_{xy}^+)$.
From the uniform separation of the normalization points, the metrics $d_c$ on $\partial \bbH$ in these models are uniformly multiplicatively comparable to $\delta$ on the complements of $\nu/2$-neighborhoods of $\xi_c(\gamma_{xy}^+)$ (taken with respect to $\delta$).

In these metrics,
\begin{align*}
 \frac{d_a(\phi_{ab}(\xi_b(x)), \phi_{ab}(\xi_b(y)))}{d_b(\xi_b(x),\xi_b(y))^{h_{ab}}} &=  \frac{e^{-\ell_a(\gamma_{xy}^{-1})}d_a(\phi_{ab}(\xi_b(x_0)), \phi_{ab}(\xi_b(y_0)))}{e^{-h_{ab}\ell_b(\gamma_{xy}^{-1})}d_b(\xi_b(x_0),\xi_b(y_0))^{h_{ab}}} \\
 &\leq \exp{(h_{ab} \ell_b(\gamma_{xy}^{-1}) - \ell_a(\gamma_{xy}^{-1}))} C \\
 & \leq C,
\end{align*} for a fixed finite $C$ supplied by the compactness of $L$.
So $h_{ab} = \alpha_{ab}$ and $\phi_{ab}$ is $h_{ab}$-H\"older and
\begin{align*} d_1(\rho_a, \rho_b) = -\log \alpha_{ab} \leq -\log \left( \inf_{\gamma \in \Gamma - \{e\}} \frac{\ell_{\rho_a}(\gamma)}{\ell_{\rho_b(\gamma)}}\right) = \log \left( \sup_{\gamma \in \Gamma - \{e\}}\frac{\ell_{\rho_b}(\gamma)}{\ell_{\rho_a}(\gamma)} \right) = d_{\text{Th}}(\rho_b,\rho_a). &\qedhere
\end{align*}
\end{proof}

\subsection{Notations and reminders}
We set some notation here for the rest of the paper.

\subsubsection{Lie-Theoretic Notations}\label{sss-lie-notations}
Let $\mathfrak{a} = \{ (x_1, ..., x_n) \in \bbR^n \mid \sum_{i=1}^n x_i = 0 \}$ be a standard Cartan subalgebra of $\mathfrak{sl}_n(\bbR)$, identified with diagonal trace-free matricies in $\mathfrak{gl}_n(\bbR)$.
We fix the Weyl chamber $\mathfrak{a}^+ = \{(x_1, ..., x_n) \in \mathfrak{a}  \mid x_1 \geq ... \geq x_n \}$. Denote the interior of $\mathfrak{a}^+$ by $\mathring{\mathfrak{a}}^+$. 
Let $\lambda : \text{PSL}_n(\bbR) \to \mathfrak{a}^+$ be the Jordan projection, i.e. $\lambda(g) = (\ell_1(g), ... , \ell_n(g))$ is the logarithms of the absolute values of the generalized complex eigenvalues of $g$, listed in non-increasing order with multiplicity.

For $\varphi \in \mathfrak{a}^*$ so that the restriction of $\varphi$ to $\mathfrak{a}^+$ is non-negative, the \textit{$\varphi$-length} $\ell^\varphi : \text{PSL}_n(\bbR) \to \bbR^+$ is given by $\ell^\varphi(g) = \varphi(\lambda(g))$. We adopt the notation that for a representation $\rho : \Gamma \to \text{PSL}_n(\bbR)$ and $\gamma \in \Gamma$ that $\ell_\rho^\varphi(\gamma) = \ell^\varphi(\rho(\gamma))$.

For $i = 1, ..., n-1$ write the $i$-th simple root $\mathfrak{a} \to \bbR$ given by $(x_1, ..., x_n) \mapsto x_i - x_{i+1}$ as $\alpha_i$.
The \textit{Hilbert functional} $\text{H}: \mathfrak{a} \to \bbR$ is given by $\text{H}(x_1, ..., x_n) = x_1 - x_n = \sum_{i=1}^{n-1} \alpha_i$.

The \textit{limit cone} $\mathcal{L}_\Lambda$ of a subgroup $\Lambda < \text{PSL}_n(\bbR)$ is the smallest closed cone containing $\lambda(g)$ for all $g \in \Lambda$ \cite{benoist1997proprietes}. Denote the limit cone of the image $\rho(\Gamma)$ of a representation $\rho$ by $\mathcal{L}_\rho$.

\subsubsection{Hitchin Representations and Hyperconvexity}\label{sss-hyperconvexity}
We recall the characterization of Hitchin representations in terms of the structure of their boundary maps and some related facts.

Informally, the relevant condition on a map $\xi: \partial \Gamma \to \mathcal{F}(\bbR^n)$ is that subspaces formed as sums of entries of $\xi(x)$ are in as general position as possible, and limits of sums of flag entries for nearby points converge to higher entries of the flag.
Formally, a continuous map $\xi: \partial \Gamma \to \mathcal{F}(\bbR^n)$ is a \textit{hypercovnex Frenet curve} if:
\begin{enumerate}
    \item For any $k_1,...,k_j$ so $\sum_{l=1}^j k_l = p \leq n$, and distinct $x_1,..., x_j \in \partial \Gamma$, the sum $\xi^{k_1}(x_1) + ... + \xi^{k_j}(x_j)$ is direct.
    \item For any $x \in \partial \Gamma$, $k_1,...,k_j$ as above, and sequence $(x_1^m, ..., x_j^m)$ of $j$-ples of distinct points in $\partial \Gamma$ converging to $(x, ..., x)$ we have that $\xi^p(x) = \lim\limits_{m \to \infty} \bigoplus_{l = 1}^j \xi^{k_l}(x_l^m)$.
\end{enumerate}

This condition implies far more families of subspaces than those in the definition must be in general position and satisfy limit compatibility conditions, as is shown in \cite{guichard2005dualite}. We use the results of \cite{guichard2005dualite} in the following, especially Th\'eor\`eme 2, Lemme 6, and Prop. 7. We have:

\begin{theorem}[\cite{guichard2008composantes} \cite{labourie2006anosov}] A representation {\rm{$\rho: \Gamma \to \text{PSL}_n(\bbR)$}} is Hitchin if and only if there exists a $\rho$-equivariant hyperconvex Fren\'et curve $\xi_\rho : \partial \Gamma \to \mathcal{F}(\bbR^n)$.
\end{theorem}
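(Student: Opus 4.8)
The plan is to prove the two implications separately, following Labourie \cite{labourie2006anosov} for the direction ``Hitchin $\Rightarrow$ hyperconvex Frenet limit curve'' and Guichard \cite{guichard2008composantes} for the converse, with the Fuchsian locus and the Veronese curve serving as the common anchor.

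\textbf{Forward direction.} First I would treat the base case of an $n$-Fuchsian representation $\rho_0 = \iota_n \circ j$, where $j : \Gamma \to \mathrm{PSL}_2(\bbR)$ is discrete and faithful and $\iota_n : \mathrm{PSL}_2(\bbR) \to \mathrm{PSL}_n(\bbR)$ is the irreducible representation on $\mathrm{Sym}^{n-1}(\bbR^2) \cong \bbR^n$. Identifying $\partial\Gamma$ with $\RP^1$ via $j$, the map sending a point to the full osculating flag of the rational normal curve $t \mapsto [1 : t : \cdots : t^{n-1}]$ is continuous, $\rho_0$-equivariant, and a direct Vandermonde-determinant computation shows it satisfies both the general-position condition (1) and the limit condition (2); hence it is hyperconvex Frenet. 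Next I would record that Hitchin representations are Anosov with respect to the minimal parabolic subgroup, which is part of the cited Anosov-representation theory (\cite{labourie2006anosov}, \cite{guichard2012anosov}); consequently every $\rho \in \mathrm{Hit}_n(S)$ carries a continuous $\rho$-equivariant transverse limit map $\xi_\rho : \partial\Gamma \to \mathcal{F}(\bbR^n)$, varying continuously with $\rho$. The remaining, and main, task is to upgrade ``transverse'' to ``hyperconvex Frenet.'' For this I would run an open-and-closed argument on the connected space $\mathrm{Hit}_n(S)$: the locus of representations whose limit curve is hyperconvex Frenet is open, because conditions (1) and (2) are open conditions on maps $\partial\Gamma \to \mathcal{F}(\bbR^n)$ once $\partial\Gamma$ is compact and $\xi_\rho$ depends continuously on $\rho$; and it is closed, because a uniform limit of equivariant hyperconvex Frenet curves remains hyperconvex Frenet, again by compactness of $\partial\Gamma$ together with the Anosov condition preventing degeneration of the limit curves. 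Since this locus is nonempty (it contains the Fuchsian locus by the base case) and $\mathrm{Hit}_n(S)$ is connected, it is everything.

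\textbf{Converse direction.} Suppose now $\rho$ admits a $\rho$-equivariant hyperconvex Frenet curve $\xi$. Following Guichard, I would first show that hyperconvexity forces $\rho$ to be irreducible, discrete, faithful, and Anosov: up to the standard dynamics-preserving normalization, $\xi$ is the Anosov limit map, and one extracts the needed contraction estimates from the general-position and limit-compatibility conditions using the tools of \cite{guichard2005dualite}. Then I would show the hyperconvex locus $\mathcal{H}$ in the character variety is both open (nearby representations, being small deformations of an Anosov representation, have limit maps uniformly close to $\xi$, and the open conditions (1)--(2) persist) and closed (a compactness argument on $\partial\Gamma$, using that a hyperconvex representation cannot limit within the character variety onto a reducible one). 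The crucial step is then to prove that every connected component of $\mathcal{H}$ meets the Fuchsian locus; here I would invoke Guichard's structural analysis of hyperconvex curves, combined with the known description of the components of the $\mathrm{PSL}_n(\bbR)$ character variety, to rule out ``extra'' hyperconvex components disjoint from the $n$-Fuchsian representations. Once that is in hand, $\mathcal{H}$ is open and closed inside a component of the character variety that contains Fuchsian representations, hence equals $\mathrm{Hit}_n(S)$, and $\rho$ is Hitchin.

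\textbf{Main obstacle.} I expect the hard part to be exactly this last step of the converse: showing the hyperconvex locus has no connected component avoiding the Fuchsian representations. Everything else reduces to soft open/closed arguments powered by compactness of $\partial\Gamma$, continuous dependence of Anosov limit maps on the representation, and the explicit Veronese base case; but excluding spurious components requires genuine global input on the topology of the $\mathrm{PSL}_n(\bbR)$ character variety (or Guichard's finer analysis of hyperconvex curves), which is precisely the content supplied by \cite{guichard2008composantes}.
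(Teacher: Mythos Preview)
The paper does not prove this statement at all: it is quoted as a background characterization of Hitchin representations, with proof deferred entirely to the cited references \cite{guichard2008composantes} and \cite{labourie2006anosov}. There is therefore no ``paper's own proof'' to compare your proposal against.

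That said, as a sketch of what those references do, your outline is broadly accurate in spirit but glosses over the genuinely delicate points. In the forward direction, the assertion that the hyperconvex Frenet locus is \emph{closed} in $\mathrm{Hit}_n(S)$ is not soft: a uniform limit of hyperconvex Frenet curves need not obviously remain hyperconvex (the general-position condition is open, not closed), and Labourie's actual argument proceeds differently, extracting hyperconvexity from the Anosov contraction properties rather than from a direct open-and-closed argument on condition (1). In the converse direction you have correctly located the hard step---ruling out hyperconvex components disjoint from the Fuchsian locus---but ``invoke Guichard's structural analysis'' is a placeholder for essentially the entire content of \cite{guichard2008composantes}, which is substantial. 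So your plan is a fair high-level map, but the two places you flag as routine (closedness in the forward direction) or defer to a citation (component analysis in the converse) are exactly where the real work lies.
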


The structure of exterior powers of Hitchin representations is relevant in the sequel.
The definition here is due to Pozetti-Sambarino-Wienhard \cite{pozzettiSambarino2022lipschitz}: a representation $\rho: \Gamma \to \text{PSL}_n(\bbR)$ is \textit{(1,1,2)-hyperconvex} if it is $\{P_1, P_2\}$-Anosov (see \cite{guichard2012anosov}) and $(\xi^1(x) \oplus \xi^1(y)) \cap \xi^{n-2}(z) = \{0\}$ for all distinct $x,y,z \in \partial \Gamma$.
The Anosov condition gaurantees the existence of limit maps $(\xi^1, \xi^2, \xi^{n-2}, \xi^{n-1})$ of partial flags consisting of subspaces of the dimensions labelled by superscripts.
All that we shall need on $(1,1,2)$-hyperconvexity is contained in \cite{pozzettiSambarinoWienhard2021conformality}:
\begin{enumerate}
    \item If $\rho: \Gamma \to \text{PSL}_n(\bbR)$ is Hitchin, then $\Lambda^k \rho$ is $(1,1,2)$-hyperconvex for $k = 1, ..., n-1$,
    \item If $\rho : \Gamma \to \text{PSL}_n(\bbR)$ is $(1,1,2)$-hyperconvex, then $\xi^1(\partial \Gamma) \subset \RP^{n-1}$ is $C^1$ with tangent line $\xi^2(x)$ at $\xi^1(x)$ for all $x \in \partial \Gamma$. 
\end{enumerate}

\subsection{General Simple Root Stretch Metrics}\label{s-simple-roots-computation}

We explain how Theorem A follows from work of Tsouvalas.
The observation made here almost appears in \S 6 of \cite{tsouvalas2023holder}, in particular around Corollary 6.1.
His theorem, stated in our notation, is:

\begin{theorem}[Tsouvalas \cite{tsouvalas2023holder} Thm. 1.9]
    Let $\rho_a$ and $\rho_b$ be $(1,1,2)$-hyperconvex representations to ${\rm{PSL}}_n(\bbR)$.
    Then $\psi_{ab}: \xi^1_b(\partial \Gamma) \to \xi^1_a(\partial \Gamma)$ defined by $\xi^1_a \circ (\xi^1_b)^{-1}$ realizes its supremal $\alpha$-H\"older regularity, with H\"older exponent $\alpha_{ab} = \inf_{\gamma \in \Gamma - \{e \}} \ell_{\rho_a}^{\alpha_k}(\gamma)/\ell_{\rho_b}^{\alpha_k}(\gamma).$
\end{theorem}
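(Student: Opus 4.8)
The plan is to prove the two inequalities $d_1 \geq d_{\mathrm{Th}}$ and $d_1 \leq d_{\mathrm{Th}}$ separately, mirroring the structure of the Teichm\"uller-space argument in \S\ref{s-teich-space-case} but generalizing each ingredient to $\mathrm{PSL}_n(\bbR)$ and $(1,1,2)$-hyperconvex representations. The key point is that the map $\psi_{ab} = \xi^1_a \circ (\xi^1_b)^{-1}$ between the limit curves $\xi^1_b(\partial\Gamma)$ and $\xi^1_a(\partial\Gamma)$ in $\RP^{n-1}$ is equivariant, so its local distortion near an attracting fixed point of $\rho_b(\gamma)$ is governed by the ratio of the gaps between the top two eigenvalues of $\rho_a(\gamma)$ and of $\rho_b(\gamma)$, which is exactly $\ell^{\alpha_1}_{\rho_a}(\gamma)/\ell^{\alpha_1}_{\rho_b}(\gamma)$.

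\textbf{Lower bound on $-\log\alpha_{ab}$ (i.e. the map is no more regular than claimed).} First I would fix $\gamma \in \Gamma - \{e\}$ and work near its attracting fixed point. For a proximal $g \in \mathrm{PSL}_n(\bbR)$, in an affine chart centered at $x_g^+$ adapted to the eigenspace decomposition, the derivative of $g$ at $x_g^+$ along $\RP^{n-1}$ has top singular value of order $e^{-\alpha_1(\lambda(g))}$ and the contraction toward $x_g^+$ of a point at distance $\delta$ along the $\xi^2$-direction is of order $e^{-\alpha_1(\lambda(g))}\delta$. So taking a sequence $x_m \to \gamma^+$ in $\partial\Gamma$ with $\xi^1_b(x_m)$ approaching $\xi^1_b(\gamma^+)$ at the natural geometric rate $e^{-m\,\alpha_1(\lambda(\rho_b(\gamma)))}$ (achievable because $\Lambda^1\rho_b$, being $(1,1,2)$-hyperconvex, has $\xi^1$ of class $C^1$ with tangent $\xi^2$, so the curve is genuinely transverse to $X^<_{\rho_b(\gamma)}$ and the contraction is non-degenerate), the quotient
\begin{align*}
\frac{\delta(\psi_{ab}(\xi^1_b(x_m)), \psi_{ab}(\xi^1_b(\gamma^+)))}{\delta(\xi^1_b(x_m), \xi^1_b(\gamma^+))^\alpha} \asymp \exp\bigl(m(\alpha\,\alpha_1(\lambda(\rho_b(\gamma))) - \alpha_1(\lambda(\rho_a(\gamma))))\bigr)
\end{align*}
blows up whenever $\alpha > \ell^{\alpha_1}_{\rho_a}(\gamma)/\ell^{\alpha_1}_{\rho_b}(\gamma)$. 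Taking the infimum over $\gamma$ gives $\alpha_{ab} \leq \inf_\gamma \ell^{\alpha_1}_{\rho_a}(\gamma)/\ell^{\alpha_1}_{\rho_b}(\gamma)$, hence $d_1 \geq d_{\mathrm{Th}}^{\alpha_1}$.

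\textbf{Upper bound (the map is at least as regular as claimed).} This is the substantive half and follows the template of \S\ref{ss-computation-teich} closely. Write $h_{ab} = \inf_{\gamma}\ell^{\alpha_1}_{\rho_a}(\gamma)/\ell^{\alpha_1}_{\rho_b}(\gamma)$. I would fix reference metrics $\delta$ on $\RP^{n-1}$ (one for each of $\xi^1_a,\xi^1_b$; really two copies) and $d_{\partial\Gamma}$ on $\partial\Gamma$. By Fact~\ref{lemma-teich-proximality} applied to $\rho_a,\rho_b$ (valid since these are Anosov, so Abels--Margulis--Soifer applies) there is a finite $F \subset \Gamma$ so that every $\gamma$ can be pre-composed by some $\eta \in F$ to make both $\rho_a(\eta\gamma), \rho_b(\eta\gamma)$ $(r,\varepsilon)$-proximal. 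Using the uniform convergence group property of $\Gamma$ acting on $\partial\Gamma^{(3)+}$, compress an arbitrary positively-oriented triple $(x,y,z)$ into a fixed compact $L = FK$ by some $\gamma_{xy} \in \Gamma^{a,b}_{(r,\varepsilon)}$; compactness of $L$ gives a uniform separation constant $\nu$ for the images of triple points under $\xi^1_a$ and $\xi^1_b$. The crucial refinement — the analogue of Claim~\ref{claim-teich-better-biproximality} — is that by restricting to pairs $x,y$ with $\delta(\xi^1_c(x),\xi^1_c(y))$ below a scale $\tau(\varepsilon')$ of our choosing, we force $\rho_a(\gamma_{xy}),\rho_b(\gamma_{xy})$ to be $(r,\varepsilon')$-biproximal for $\varepsilon'$ as small as we like while keeping $L$ fixed; this is where $(1,1,2)$-hyperconvexity enters again, ensuring the relevant bilipschitz lower bounds near fixed points translate smallness of chordal distance into largeness of the $\alpha_1$-gap. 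Once $\varepsilon'$ is small relative to $\nu$ and $r_0$, the same case analysis as in the Teichm\"uller proof localizes the attracting/repelling fixed points of $\gamma_{xy}$, one normalizes charts (affine charts adapted to the top eigenline and the hyperplane $X^<$, which here requires choosing coordinates so that the reference metric is uniformly comparable to the pushed-forward metric on the complement of an $O(\nu)$-neighborhood of the attractor), and one computes
\begin{align*}
\frac{\delta(\psi_{ab}(\xi^1_b(x)), \psi_{ab}(\xi^1_b(y)))}{\delta(\xi^1_b(x),\xi^1_b(y))^{h_{ab}}} \asymp \exp\bigl(h_{ab}\,\ell^{\alpha_1}_{\rho_b}(\gamma_{xy}^{-1}) - \ell^{\alpha_1}_{\rho_a}(\gamma_{xy}^{-1})\bigr)\cdot C \leq C
\end{align*}
by definition of $h_{ab}$, with $C$ supplied by compactness of $L$. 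This shows $\psi_{ab}$ is $h_{ab}$-H\"older and attains it, so $d_1 \leq d_{\mathrm{Th}}^{\alpha_1}$; for general $k$ one runs the identical argument on the limit curve $\xi^k_c(\partial\Gamma) \subset \RP^{\binom{n}{k}-1}$ of $\Lambda^k\rho_c$, which is $(1,1,2)$-hyperconvex, replacing $\alpha_1$ by $\alpha_k$ and $d_{\mathrm{Th}}$ by $d_{\mathrm{Th}}^{\alpha_k}$.

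\textbf{Main obstacle.} The delicate step is the proximality refinement (Claim~\ref{claim-teich-better-biproximality} in rank one): in higher rank one must control not just a single contraction rate but the full singular-value spectrum of $\rho_c(\gamma_{xy})$ near its fixed points, and in particular show that forcing two limit-curve points to be $\delta$-close forces the top eigenvalue gap $\alpha_1(\lambda(\cdot))$ to be correspondingly large, uniformly. The quantitative input is the $C^1$-regularity of $\xi^1$ with tangent $\xi^2$ (from $(1,1,2)$-hyperconvexity) together with uniform transversality of $\xi^1(\partial\Gamma)$ to the relevant hyperplanes, packaged via a loxodromy-refinement lemma of the sort the paper defers to \S\ref{ss-loxodromy-refinement}. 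Getting clean uniform constants there — rather than merely $\alpha'$-H\"older for all $\alpha' < h_{ab}$ — is what makes the "attains the exponent" conclusion work and is the technical heart of the argument.
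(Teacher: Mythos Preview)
The paper does not prove this statement: it is quoted as Tsouvalas's theorem from \cite{tsouvalas2023holder} and used as a black box in \S\ref{s-simple-roots-computation} to deduce Theorem~A. There is therefore no in-paper proof to compare against. The paper does prove the $\mathrm{PSL}_2(\bbR)$ special case independently in \S\ref{s-teich-space-case}, and your proposal is precisely an attempt to extend that argument to the $(1,1,2)$-hyperconvex setting. That is a reasonable strategy and is broadly how such a proof goes.

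One point in your sketch deserves more care. In the final displayed computation you write the ratio as $\exp(h_{ab}\,\ell^{\alpha_1}_{\rho_b}(\gamma_{xy}^{-1}) - \ell^{\alpha_1}_{\rho_a}(\gamma_{xy}^{-1}))\cdot C$, with $\alpha_1$ appearing symmetrically. The upper bound on the numerator by $e^{-\ell^{\alpha_1}_{\rho_a}}$ is automatic, since $e^{-\alpha_1}$ is the slowest contraction rate of $\rho_a(\gamma_{xy}^{-1})$ in the adapted affine chart. But the \emph{lower} bound on the denominator by $e^{-\ell^{\alpha_1}_{\rho_b}}$ is not: the naive linear-algebra bound in the chart only gives $e^{-\ell^{\mathrm H}_{\rho_b}}$ (compare the slack in \eqref{eq-slack-step}), which would yield a strictly worse H\"older exponent. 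To recover $\alpha_1$ here one must use that $\xi^1_b(x_0),\xi^1_b(y_0)$ lie on the limit curve, whose displacement has a uniformly definite component in the slowest-contracted direction. Concretely, one projects from $\xi^{n-2}_b(\gamma_{xy}^+)$ to a projective line; $\{P_1,P_2\}$-Anosov transversality gives $\xi^2_b(x)\cap\xi^{n-2}_b(\gamma_{xy}^+)=\{0\}$, so this projection is a $C^1$ immersion on the limit curve, and $(1,1,2)$-hyperconvexity makes it injective, hence bilipschitz on compacta. On the quotient line the contraction is exactly $e^{-\alpha_1}$. You allude to this under ``main obstacle'' but your displayed computation presents the conclusion as if it were immediate, which it is not in rank $>1$; this step is where the $(1,1,2)$ hypothesis is genuinely consumed, not just to get $C^1$ regularity of the curve but to control the denominator sharply.
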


Write the first entry of the limit map of $\Lambda^n\rho$ as $\xi^1_{\Lambda^k\rho}$. To describe $\xi^1_{\Lambda^k\rho}$, denote the Pl\"ucker embedding $\text{Gr}_k(\bbR^n) \to \bbP(\Lambda^k \bbR^n)$ by $\text{Pl}_{n,k}$. Then $\xi^1_{\Lambda^k \rho} = \text{Pl}_{n,k} \circ \xi^k$ with $\xi = (\xi^1, ..., \xi^{n-1})$ the limit map of $\rho$.
The coupling construction is:
\begin{definition}
        For $\rho_a, \rho_b : \Gamma \to {\rm{PSL}}_n(\bbR)$ Hitchin, define $\psi_{ab}^k : \xi^1_{\Lambda^k\rho_b}(\partial \Gamma) \to \xi^1_{\Lambda^k\rho_a}(\partial \Gamma)$ by $\xi^1_{\Lambda^k\rho_a} \circ (\xi^1_{\Lambda^k\rho_b})^{-1} = {\rm{Pl}}_{n,k} \circ \phi_{ab}^k \circ {\rm{Pl}}_{n,k}^{-1}.$
    \end{definition}
The H\"older regularities of $\phi_{ab}^k$ and $\psi_{ab}^k$ agree because Pl\"ucker embeddings are smooth embeddings. From the behavior of eigenvalues of diagonal matricies under exterior powers and Tsouvalas' theorem we conclude $\alpha_{ab}^k = \inf_{\gamma \in \Gamma - \{e \}} \ell_{\rho_a}^{\alpha_k}(\gamma)/\ell_{\rho_b}^{\alpha_k}(\gamma)$ for Hitchin representations $\rho_a, \rho_b: \Gamma \to \text{PSL}_n(\bbR)$ and $k = 1, ..., n-1$.
Theorem A follows:
\begin{align*}
    d_k(\rho_a, \rho_b) = -\log \alpha_{ab}^k = - \log \left( \inf_{\gamma \in \Gamma - \{e \}} \frac{\ell_{\rho_a}^{\alpha_k}(\gamma)}{\ell_{\rho_b}^{\alpha_k}(\gamma)} \right) = \log \left( \sup_{\gamma \in \Gamma - \{e\}} \frac{\ell_{\rho_b}^{\alpha_k}(\gamma)}{\ell_{\rho_a}^{\alpha_k}(\gamma)} \right) = d_{\text{Th}}^{\alpha_k}(\rho_a, \rho_b).
\end{align*}

\section{Bouquets}\label{s-bouquet-coupling}
This section develops the structure of the tori $T_\rho^k$ used for the estimates in \S \ref{s-bounds}.
The key points are that each $T_\rho^k$ has two canonical foliations by Frenet curves, that $T_\rho^k - \xi^1_\rho(\partial \Gamma)$ is $C^1$, and that $T_\rho^k - \xi^1(\partial \Gamma)$ has an approximate-product structure.

\subsection{First Features}\label{ss-bouquet-basics}
Let $\rho : \Gamma \to \text{PSL}_n(\bbR)$ be a Hitchin representation with $n > 3$.
We recall the notations that the limit map of $\rho$ is $\xi = (\xi^1, ..., \xi^{n-1})$, that $\mathcal{K}_n = \{2, ..., \lfloor n/2 \rfloor\}$, that $k_n = \lfloor n/2 \rfloor$, where $\lfloor \cdot \rfloor$ is the floor function, that $B_n = (\partial \Gamma^2_{1} \sqcup ... \sqcup \partial \Gamma^2_{k_n})/\sim$ is the $n$-bouquet defined in the introduction, and $\Phi_\rho^k: \partial \Gamma^2_k \to \RP^{n-1}$ are the individual-$\partial \Gamma^2$ maps.
The maps $\Phi_\rho^k$ agree on all equivalent points and so induce a map $\Phi_\rho : B_n \to \RP^{n-1}$, with image denoted by $\mathcal{B}_\rho$.
Denote the set of pairs of distinct elements in $\partial \Gamma$ by $\partial \Gamma^{(2)}$ and the collection of triples of positively oriented elements of $\partial \Gamma$ by $\partial \Gamma^{(3)+}$.

\begin{lemma}\label{lemma-boquet-param}
    The map $\Phi_\rho : B_n \to \RP^{n-1}$ is a homeomorphism onto $\mathcal{B}_\rho$.
\end{lemma}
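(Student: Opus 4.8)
The map $\Phi_\rho$ is built from the pieces $\Phi_\rho^k : \partial \Gamma^2_k \to \RP^{n-1}$, which agree on diagonals and glue to give $\Phi_\rho : B_n \to \RP^{n-1}$. Since $B_n$ is compact (being a finite gluing of copies of the compact space $\partial\Gamma^2$) and $\RP^{n-1}$ is Hausdorff, it suffices to prove $\Phi_\rho$ is continuous and injective; then it is automatically a homeomorphism onto its image $\mathcal{B}_\rho$. Continuity is the easy half: each $\Phi_\rho^k$ is continuous on $\partial\Gamma^2$ because on the off-diagonal $x\ne y$ the intersection $\xi^k(x)\cap\xi^{n+1-k}(y)$ is a transverse intersection of complementary-dimension subspaces (direct sum by hyperconvexity, clause (1)), hence depends continuously on $(x,y)$, and as $(x,y)\to(x_0,x_0)$ the Frenet limit property (clause (2)) forces $\xi^k(x)\cap\xi^{n+1-k}(y)\to\xi^1(x_0)$ — this is exactly the osculating-flag degeneration, and it matches the diagonal value, so $\Phi_\rho^k$ is continuous up to the diagonal; the induced map on the quotient $B_n$ is then continuous by the universal property of the quotient topology.

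\textbf{Injectivity — the main obstacle.} The real work is showing $\Phi_\rho$ is injective, and there are three cases to rule out. First, injectivity of each $\Phi_\rho^k$ on a single copy $\partial\Gamma^2_k$: if $\xi^k(x)\cap\xi^{n+1-k}(y) = \xi^k(x')\cap\xi^{n+1-k}(y')$ for two off-diagonal pairs, I want to recover $(x,y)=(x',y')$. The plan is to show that the line $L = \Phi_\rho^k(x,y)$ determines $x$ and $y$: the flag entry $\xi^k(x)$ is the unique $\xi^k$-subspace containing $L$ (if $L \subset \xi^k(x)\cap\xi^k(x')$ with $x\ne x'$, then $L$ lies in $\xi^k(x)\cap\xi^k(x')$, but $\dim(\xi^k(x)+\xi^k(x')) = 2k \le n$ by hyperconvexity clause (1) applied with two points — wait, one needs $2k\le n$, which holds since $k\le k_n=\lfloor n/2\rfloor$ — so the intersection is $\{0\}$, contradiction), and symmetrically $\xi^{n+1-k}(y)$ is the unique $\xi^{n+1-k}$-subspace containing $L$ (here $k + (n+1-k) = n+1 > n$, so one uses the finer general-position consequences of hyperconvexity from \cite{guichard2005dualite}, e.g. that $\xi^k(x')\cap\xi^{n+1-k}(y)$ is already $1$-dimensional for $x'\ne y$, forcing $\xi^{n+1-k}(y)\cap\xi^{n+1-k}(y')$ to be at most... again one needs a dimension count: $\dim(\xi^{n+1-k}(y)+\xi^{n+1-k}(y'))$; since $n+1-k \ge n+1-\lfloor n/2\rfloor > n/2$, two such subspaces generically span everything and meet in dimension $2(n+1-k)-n = n+2-2k$; containment of $L$ is fine but I must argue $y=y'$ using that $\xi^1(y)\subset\xi^{n+1-k}(y)$ and the curve is a Frenet curve). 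This bookkeeping is where I'd be most careful; the cleanest route is probably to invoke the duality/general-position package of \cite{guichard2005dualite} (Th\'eor\`eme 2, Lemme 6, Prop. 7, cited in the excerpt) rather than re-derive the needed transversality statements.

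\textbf{Cross-copy and diagonal cases.} Second, I must rule out $\Phi_\rho^k(x,y) = \Phi_\rho^m(x',y')$ for $k\ne m$ in $\mathcal{K}_n$ with at least one pair off-diagonal: again reduce to subspace-recovery — the line $L$ lies in $\xi^k(x)$ and in $\xi^m(x')$, and if $x\ne x'$ the general-position properties bound $\dim(\xi^k(x)\cap\xi^m(x'))$ (it is $\max(0, k+m-n)$ generically, and one wants to show $L$ cannot lie in such a small or empty intersection, or if $x=x'$ then $L\subset\xi^k(x)\cap\xi^{n+1-k}(y)\cap\xi^m(x)\cap\xi^{n+1-m}(y')$ and a count on $\xi^{n+1-k}(y)\cap\xi^{n+1-m}(y')$ forces $y=y'$ and then $k=m$ from matching the intersection dimension, contradiction). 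Third, the diagonal identifications already built into $\sim$ must be exactly the fibers over $\xi^1(\partial\Gamma)$: a point $(x,x)\in\partial\Gamma^2_k$ maps to $\xi^1(x)$, and I should check $\xi^1(x)\ne\Phi_\rho^m(x',y')$ for any off-diagonal pair (otherwise $\xi^1(x)\subset\xi^m(x')$ with $x\ne x'$ would violate hyperconvexity since $\xi^1(x)+\xi^{n-1}(x')$ is direct, hence $x=x'$, but also $\xi^1(x)\subset\xi^{n+1-m}(y')$ forces $x=y'$, contradicting $x'\ne y'$). Once all three cases are dispatched, $\Phi_\rho$ is a continuous bijection from the compact space $B_n$ onto $\mathcal{B}_\rho\subset\RP^{n-1}$, hence a homeomorphism, completing the proof.
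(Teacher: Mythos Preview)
Your proposal is correct and follows essentially the same approach as the paper: reduce to showing $\Phi_\rho$ is a continuous injection via compactness of $B_n$ and Hausdorffness of $\RP^{n-1}$, then obtain continuity from the limit-compatibility (intersection form) and injectivity from the general-position properties of hyperconvex Frenet curves, both as packaged in \cite{guichard2005dualite} Lemme~6. The paper dispatches each half in a single sentence by citing that lemma, whereas you unwind the case analysis explicitly, but you correctly identify the same source as the cleanest route.
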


\begin{proof}
    Since $B_n$ is compact and Hausdorff, it suffices to show that $\Phi_\rho$ is a continuous injection.
    Continuity on each individual $\partial \Gamma^2_{k}$ ($k = 1,..., k_n$) is a direct consequence of the limit compatibility property of hyperconvex Frenet curves from their characterization in terms of intersections (\cite{guichard2005dualite} Lemme 6).
    This implies $\Phi_\rho$ is continuous.
    Injectivity is from the general position property of hyperconvex Frenet curves for intersections (\cite{guichard2005dualite} Lemme 6).
\end{proof}

The following lemma is quite useful in understanding the structure of $\mathcal{B}_\rho$.

\begin{lemma}[Frenet Restriction]\label{lemma-restriction}
Let $\xi : \partial \Gamma \to \mathcal{F}(\bbR^n)$ be a hyperconvex Frenet curve. Fix $1 < D < n$ and $t_0 \in \partial \Gamma$. Then $\xi_{t_0, D} = (\xi^1_{t_0, D}, ..., \xi^{D-1}_{t_0,D}): \partial \Gamma \to \mathcal{F}(\xi^D(t_0))$ defined by 
$$ \xi^k_{t_0, D}(s) = \begin{cases} \xi^{n-D+k}(s) \cap \xi^D(t_0) & s \neq t_0 \\ \xi^{k}(t_0) & s = t_0
 \end{cases} $$ is a hyperconvex Frenet curve.
\end{lemma}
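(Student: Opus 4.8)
The plan is to verify the two defining properties of a hyperconvex Frenet curve directly for $\xi_{t_0,D}$, deducing both from the corresponding properties of $\xi$ together with the extended general-position and limit-compatibility statements for $\xi$ proved in \cite{guichard2005dualite} (Th\'eor\`eme 2, Lemme 6, Prop. 7). Throughout, write $W = \xi^D(t_0)$, a $D$-dimensional subspace, and note first that for $s \neq t_0$ the intersection $\xi^{n-D+k}(s) \cap W$ is exactly $k$-dimensional: by the general-position property of $\xi$, the sum $\xi^{n-D+k}(s) + \xi^{n-D}(t_0)$ is direct when $k \le D$, hence has dimension $\min(n-D+k + (n-D), n)$; a dimension count against $W \supset \xi^{n-D}(t_0)$ then forces $\dim(\xi^{n-D+k}(s) \cap W) = k$, and nestedness in $k$ is inherited from nestedness of the $\xi^j(s)$. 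So $\xi_{t_0,D}(s)$ really is a full flag in $W \cong \bbR^D$, and continuity on $\partial\Gamma - \{t_0\}$ is clear; continuity at $t_0$ will fall out of property (2) below.

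For the directness property (1): take $k_1, \dots, k_j$ with $\sum k_l = p \le D$ and distinct points $s_1, \dots, s_j \in \partial\Gamma$. If none of the $s_l$ equals $t_0$, then $\xi^{k_l}_{t_0,D}(s_l) = \xi^{n-D+k_l}(s_l) \cap W$, and I want $\bigoplus_l \big(\xi^{n-D+k_l}(s_l)\cap W\big)$ to be direct. The clean way is to include $t_0$ as an extra point: the collection of subspaces $\xi^{n-D+k_l}(s_l)$ $(l=1,\dots,j)$ together with $\xi^{n-D}(t_0)$ has dimension sum $\sum_l (n-D+k_l) + (n-D) = j(n-D) + p + (n-D)$, which may exceed $n$, so directness of the full sum can fail; instead I apply the general-position/intersection results of \cite{guichard2005dualite} in the form that says exactly that subspaces of the shape $\xi^{a_l}(s_l)$ meet $\xi^{n-D}(t_0)^{\text{transverse-complement}}$ — more precisely, I use Prop. 7 of \cite{guichard2005dualite}, which governs intersections of flag-subspaces with a fixed flag-subspace of $\xi(t_0)$, to conclude that the images $\xi^{n-D+k_l}(s_l) \cap W$ are in direct sum inside $W$. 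If one of the points, say $s_1 = t_0$, then $\xi^{k_1}_{t_0,D}(t_0) = \xi^{k_1}(t_0)$ and I argue similarly, now using that $\xi^{k_1}(t_0) \subset W$ is a flag-subspace of $\xi(t_0)$ and the remaining intersections avoid it appropriately. In all cases the needed statement is a special case of the "extended" general-position conclusions that \cite{guichard2005dualite} extracts from the Frenet hypotheses.

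For the limit-compatibility property (2): fix $s \in \partial\Gamma$, indices $k_1,\dots,k_j$ with $\sum k_l = p \le D$, and $(s_1^m,\dots,s_j^m) \to (s,\dots,s)$ with the $s_l^m$ distinct. I must show $\bigoplus_l \xi^{k_l}_{t_0,D}(s_l^m) \to \xi^p_{t_0,D}(s)$ in the Grassmannian of $W$. Split into the generic case $s \neq t_0$ and the case $s = t_0$. When $s \neq t_0$, for large $m$ all $s_l^m \neq t_0$, so $\xi^{k_l}_{t_0,D}(s_l^m) = \xi^{n-D+k_l}(s_l^m) \cap W$; by property (2) for $\xi$ the sum $\bigoplus_l \xi^{n-D+k_l}(s_l^m)$ limits to $\xi^{q}(s)$ where $q = \sum_l (n-D+k_l) = j(n-D)+p$ — this needs $q \le n$, which may fail for $j$ large, so I instead pass to limits one subspace at a time or use the intersection-form of limit compatibility (Lemme 6 of \cite{guichard2005dualite}): the key is that intersecting with the fixed subspace $W$ is continuous on the locus where the intersection dimension is constant (equal to $p$), which it is by the dimension count in the first paragraph, so $\xi^{n-D+k_l}(s_l^m)\cap W \to \xi^{n-D+k_l}(s)\cap W$ and the directness from (1) lets me add these limits to get $\xi^p(s)\cap W = \xi^p_{t_0,D}(s)$. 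When $s = t_0$: now $\xi^p_{t_0,D}(t_0) = \xi^p(t_0)$, and I must show $\bigoplus_l\big(\xi^{n-D+k_l}(s_l^m)\cap W\big) \to \xi^p(t_0)$. Here the intersection dimension is still constantly $p$ for $s_l^m \neq t_0$, and the relevant limit statement is precisely Prop. 7 (or the $\delbar$-type limit lemmas) of \cite{guichard2005dualite}, which controls how $\xi^a(s_l^m)\cap \xi^b(t_0)$ behaves as $s_l^m \to t_0$: it converges to $\xi^{a - (n-b)}(t_0) = \xi^{(n-D+k_l)-(n-D)}(t_0) = \xi^{k_l}(t_0)$, and summing over $l$ using directness gives $\xi^p(t_0)$. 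This simultaneously establishes continuity of $\xi_{t_0,D}$ at $t_0$ (take $j=1$, $k_1 = 1$).

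\textbf{Main obstacle.} The one genuinely delicate point is property (2) at $s = t_0$, i.e. understanding $\lim_{s' \to t_0}\big(\xi^{n-D+k}(s') \cap \xi^D(t_0)\big)$ and checking it equals $\xi^{k}(t_0)$ rather than some other $k$-plane; this is where the hyperconvex Frenet limit-compatibility has to be invoked in its sharpest ``intersection with a fixed flag entry'' form, and one must be careful that the intersection dimension does not jump in the limit. Everything else is dimension-counting and assembling directness from the already-cited consequences of the Frenet condition in \cite{guichard2005dualite}; I would organize the write-up so that this limit computation is isolated as the single substantive lemma and the rest is bookkeeping.
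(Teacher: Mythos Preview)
Your overall plan---verify directness and limit compatibility for $\xi_{t_0,D}$ by reducing to the corresponding statements for $\xi$ via \cite{guichard2005dualite}---matches the paper's. The difference is that you work with the \emph{sum} formulation of the Frenet condition throughout, whereas the paper uses the equivalent \emph{intersection} characterization (Lemme~6 of \cite{guichard2005dualite}) from the start. This is not merely a matter of taste: the intersection form makes (1) a one-line computation, and your sum-form argument for (2) contains a genuine gap.

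For (1), with $\sum k_i = D$ and distinct $x_i \neq t_0$, the paper simply computes
\[
\bigcap_{i} \xi_{t_0,D}^{D-k_i}(x_i) \;=\; W \cap \bigcap_i \xi^{n-k_i}(x_i) \;=\; \{0\},
\]
the last equality holding because the codimensions $(n-D)+\sum_i k_i$ total $n$. No appeal to Prop.~7 or to sums of ambient subspaces with total dimension exceeding $n$ is needed; the case with one $x_i=t_0$ is analogous.

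The gap is in your argument for (2). You write that each summand $\xi^{n-D+k_l}(s_l^m)\cap W$ converges to $\xi^{n-D+k_l}(s)\cap W$ and that ``directness from (1) lets me add these limits.'' But once all the points have collapsed to a single $s$, the limiting subspaces $\xi^{n-D+k_l}(s)\cap W = \xi^{k_l}_{t_0,D}(s)$ are \emph{nested} entries of one flag, so their sum is just the largest one, of dimension $\max_l k_l$, not $p=\sum_l k_l$. The same problem recurs at $s=t_0$: your termwise limits $\xi^{k_l}(t_0)$ are again nested, and summing them gives $\xi^{\max_l k_l}(t_0)$, not $\xi^p(t_0)$. (Separately, ``$\xi^p(s)\cap W = \xi^p_{t_0,D}(s)$'' is not the right formula; by definition $\xi^p_{t_0,D}(s)=\xi^{n-D+p}(s)\cap W$.) Taking limits termwise and then summing cannot recover a $p$-plane here.

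The fix is precisely the intersection route you mention in passing but do not carry out: from (1) one has the equality $\bigcap_i \xi^{D-k_i}_{t_0,D}(x_i^m) = W\cap\bigcap_i \xi^{n-k_i}(x_i^m)$; the inner intersection converges to $\xi^{n-k}(s)$ by limit compatibility of $\xi$ in its intersection form, and intersecting with the fixed $W$ is continuous by transversality of $W$ and $\xi^{n-k}(s)$ (for $s\neq t_0$) or by limit compatibility of $\xi$ again (for $s=t_0$). This is exactly the paper's argument, and it sidesteps the collapsing-sum issue entirely. So the obstacle you flagged (the $s=t_0$ limit) is real, but the step that actually fails in your write-up is already the generic case $s\neq t_0$.
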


\begin{proof}
    This follows from the characterization of hyperconvex Frenet curves in terms of intersections (\cite{guichard2005dualite} Lemme 6).
    Namely, if $k_1, ..., k_j$ are positive integers with $\sum_{i=1}^jk_i = D$ and $x_1, ..., x_j \in \partial \Gamma - \{t_0\}$ are distinct, since $\xi$ is a hyperconvex Frenet curve
    \begin{align} \bigcap_{i=1}^j \xi_{t_0, D}^{D-k_i} (x_i) \subset \xi^D(t_0) \cap \left( \bigcap_{i=1}^j \xi^{n-k_i}(x_i) \right) = \{0\}. \label{eq-computation-hyperconvexity} \end{align}
    Note that this implies that the inclusion in Equation \ref{eq-computation-hyperconvexity} is an equality.
    The case where one $x_j \in \partial \Gamma$ is equal to $t_0$ is similar, so that hyperconvexity is established.
    
    Limit compatibility is a consequence of the limit compatibility of $\xi$ as follows.
    Let $\sum_{i=1}^j k_i = k < D$ and let $(x_1^m, x_2^m, ..., x_j^m)$ be a sequence of $j$-ples of distinct entries of $\partial \Gamma$ converging to $(x,..., x)$ for some $x \in \partial\Gamma$.
    If $x \neq t_0$, by the general position property for $\xi_{t_0,D}$ proved above, transversality of $\xi^D(t_0)$ and $\xi^{n-k}(x)$, and limit compatibility of $\xi$, \begin{align*}
        \lim_{m \to\infty} \bigcap_{i=1}^j \xi^{D-k_i}_{t_0,D}(x_i^m) = \lim_{n \to \infty} \xi^D(t_0) \cap\left( \bigcap_{i=1}^j \xi^{n-k_i}(x_i^m) \right) = \xi^D(t_0) \cap \xi^{n-k}(x) = \xi^{D-k}_{t_0,D}(x).
    \end{align*}
    The case in which $t_0 = x$ is similar, with limit compatibility of $\xi$ playing the role of transversality of $\xi^D(t_0)$ and $\xi^{n-k}(x)$ in the above.
    \end{proof}

We remark that due to the Frenet Restriction Lemma, natural objects associated to Hitchin representations in projective spaces have inductive aspects.
Structures from lower rank reappear in intersections with subspaces, but without natural group actions.
See \S \ref{ss-foliations-sliding}.

\subsection{Regularity} The tori $T_\rho^k$ have complicated behavior near the image of $\xi^1$, but much better controlled behavior elsewhere.
The first relevant definition is:

\begin{definition}
    Let $\Xi^1_\rho \subset T_\rho^k$ denote $\xi^1(\partial \Gamma)$ and $\Xi^2_{k} = T_\rho^k - \Xi^1_\rho$. Let $\Xi^2_\rho \subset \mathcal{B}_\rho$ denote $\mathcal{B}_\rho - \Xi^1_\rho$.   
\end{definition}

In this subsection, we prove a $C^1$-regularity proposition for $\Xi^2_\rho$ and a local product structure lemma that is useful in \S \ref{s-bounds}.
The sources of our control over regularity are the hyperconvexity of $\xi$ and that the curves $\xi^k$ ($k = 1,..., n-1$) are $C^1$, considered as subsets of $\text{Gr}_k(\bbR^n)$.

\begin{proposition}[Regular off Frenet]\label{prop-regular-off-frenet}
    For $\rho \in {\rm{Hit}}_n(S)$, any $p \in \Xi^2_k$ is a $C^1$ point of $T_\rho^k$.
    
    Writing $p = \xi^k(x) \cap \xi^{n-k+1}(y)$ for $x \neq y$ in $\partial \Gamma$, the tangent space is given by
    \begin{align*}
        {\rm{T}}_p T_{\rho}^k = (\xi^k(x) \cap \xi^{n-k+2}(y)) + (\xi^{k+1}(x) \cap \xi^{n-k+1}(y)).    
    \end{align*}
\end{proposition}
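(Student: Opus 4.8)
The strategy is to reduce the computation to the Frenet Restriction Lemma plus the known $C^1$-regularity of limit curves in Grassmannians. Fix $p = \xi^k(x) \cap \xi^{n-k+1}(y) \in \Xi^2_k$, so $x \neq y$. I would first observe that $T_\rho^k$ is a local coordinate image of a $2$-dimensional parameter domain: away from the diagonal, $\Phi_\rho^k(s,t) = \xi^k(s) \cap \xi^{n+1-k}(t)$ is defined on pairs $(s,t)$ near $(x,y)$, which are distinct, and by Lemma \ref{lemma-boquet-param} this is a homeomorphism onto its image. So it suffices to compute the partial derivatives of $(s,t) \mapsto \xi^k(s) \cap \xi^{n+1-k}(t)$ at $(x,y)$ and check they are linearly independent, spanning the claimed subspace.

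\textbf{Key steps.} To compute $\partial_s$ at $s = x$ (with $t = y$ fixed), I would use the Frenet Restriction Lemma with $D = n+1-k$ and $t_0 = y$: the curve $s \mapsto \xi^k(s) \cap \xi^{n+1-k}(y)$ for $s \neq y$ is exactly $\xi^1_{y, n+1-k}(s)$, the first entry of a hyperconvex Frenet curve into $\mathcal{F}(\xi^{n+1-k}(y))$. Since a hyperconvex Frenet curve is in particular $C^1$ with tangent line at $\xi^1_{y,n+1-k}(x)$ equal to its second flag entry $\xi^2_{y,n+1-k}(x) = \xi^{k+1}(x) \cap \xi^{n+1-k}(y)$ (using the $(1,1,2)$-hyperconvexity consequence recalled in \S\ref{sss-hyperconvexity}, applied to the restricted curve; or directly from the Frenet structure), the $s$-derivative direction of $p$ in $\RP^{n-1}$ lies along the line $\xi^{k+1}(x) \cap \xi^{n+1-k}(y)$. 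Symmetrically, for $\partial_t$ at $t = y$ with $s = x$ fixed, apply the Frenet Restriction Lemma to the dual curve — or equivalently apply it with the roles reorganized so that $t \mapsto \xi^k(x) \cap \xi^{n+1-k}(t)$ is, up to reindexing, again the first entry of a restricted hyperconvex Frenet curve, now inside a quotient/dual picture — yielding tangent line $\xi^k(x) \cap \xi^{n+2-k}(y)$. Adding the two independent tangent lines (they are transverse: their sum has dimension $2$ as a subspace of $\bbR^n$, since $\xi^{k+1}(x) \cap \xi^{n+1-k}(y)$ and $\xi^k(x) \cap \xi^{n+2-k}(y)$ meet only in $\xi^k(x) \cap \xi^{n+1-k}(y)$ by hyperconvex general position) gives a $2$-plane, which when projectivized at $p$ is the claimed tangent space; this also confirms $p$ is a genuine $C^1$ point, i.e. the parametrization is an immersion there.

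\textbf{Main obstacle.} The delicate point is the $t$-direction. The Frenet Restriction Lemma as stated restricts a Frenet curve in $\mathcal{F}(\bbR^n)$ to a Frenet curve in $\mathcal{F}(V)$ for a \emph{fixed} subspace $V = \xi^D(t_0)$, cleanly handling the $s$-variation of $\xi^k(s) \cap \xi^{n+1-k}(y)$ with $y$ fixed. For the $t$-variation we are varying the ambient subspace $\xi^{n+1-k}(t)$ while keeping $\xi^k(x)$ fixed, which is not literally a Frenet restriction; the right move is to dualize — pass to $(\bbR^n)^*$, where $\xi^k(x)$ becomes a codimension-$k$ flag entry of the dual limit curve (which is again hyperconvex Frenet, by \cite{guichard2005dualite}), apply the Frenet Restriction Lemma in the dual, and translate back. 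Making this dualization bookkeeping precise, so that the resulting tangent line is exactly $\xi^k(x) \cap \xi^{n+2-k}(y)$ and not an off-by-one neighbor, is the step requiring the most care; everything else is the standard fact that Frenet curves are $C^1$ with the flag-predicted osculating flag, combined with elementary transversality from hyperconvexity.
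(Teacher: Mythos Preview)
Your route via the Frenet Restriction Lemma is a genuinely different approach from the paper's. The paper instead parametrizes $\Xi^2_k$ by the $C^1$ submanifold $\mathscr{D}_\rho^k \subset \xi^k(\partial\Gamma) \times \xi^{n-k+1}(\partial\Gamma)$ of the product of Grassmannian limit curves, composes with the smooth intersection map $\pitchfork$, and reads off the tangent plane from an explicit formula for $D\pitchfork$ (Lemma \ref{lemma-intersection-der}) together with a separate computation of the tangent lines to $\xi^j(\partial\Gamma)$ inside $\text{Gr}_j(\bbR^n)$. Your approach computes the tangent lines to the two leaf curves through $p$ directly in $\RP^{n-1}$, which is pleasantly concrete and ties the tangent plane to the restricted Frenet structure used elsewhere. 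One simplification: your ``main obstacle'' is not one. Applying the Frenet Restriction Lemma with $D = k$ and $t_0 = x$ gives $\xi^1_{x,k}(t) = \xi^{n-k+1}(t) \cap \xi^k(x)$ with second entry $\xi^2_{x,k}(t) = \xi^{n-k+2}(t) \cap \xi^k(x)$, so the $t$-direction is handled symmetrically with no dualization required (in the edge case $k=2$ the restricted curve lands in $\RP^1$ and the tangent line is trivially $\xi^2(x)$, matching the formula since $\xi^{n-k+2}(y) = \bbR^n$).

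There is, however, a real gap in your final step. You pass from ``both coordinate curves through $p$ are $C^1$ with transverse, continuously varying tangent lines'' to ``$T_\rho^k$ is $C^1$ at $p$ and the parametrization is an immersion,'' but this inference needs justification: knowing each leaf of two transverse foliations is $C^1$ does not by itself make the surface $C^1$. The paper's remark immediately preceding its proof is exactly a warning here---$\Phi_\rho^k : \partial\Gamma^{(2)} \to \RP^{n-1}$ is \emph{not} $C^1$ for the natural product $C^1$ structures on $\partial\Gamma^{(2)}$---so one cannot speak of ``partial derivatives of the parametrization'' without specifying the $C^1$ structure on the domain. The fix is to put the $\xi^k$-induced $C^1$ structure on the first $\partial\Gamma$ factor and the $\xi^{n-k+1}$-induced structure on the second (equivalently, parametrize by $\xi^k(\partial\Gamma) \times \xi^{n-k+1}(\partial\Gamma)$) and use that $\pitchfork$ is smooth; then your Frenet-restriction tangent computations become the differential of a genuinely $C^1$ map, and rank $2$ gives the immersion. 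But that is precisely the paper's parametrization, so your argument ultimately needs this ingredient to close.
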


We adopt the convention $\xi^n(x) = \bbR^n$ for the edge case $k = 2$.

\begin{remark} The restriction to $\Xi^2_k$ is essential. In the case $k = 2$ and $n = 4$, a corollary of \cite{nolte2024foliationsSl4} is that $\Xi^2_k$ is exactly the set of $C^1$ points of $T_\rho^2$.
\end{remark}

\begin{remark} Particular care is needed in the analysis and exact statements of regularity results in the following, as many closely related natural maps to those we consider fail to be $C^1$.

For instance, it is a consequence of Benoist's Limit Cone Theorem \cite{benoist1997proprietes} that with respect to the $C^1$ structure on $\Xi^2_k$ from its embedding in $\RP^{n-1}$ and any product $C^1$ structure on $\partial \Gamma^{(2)}$ the maps $\Phi_\rho^k : \partial \Gamma^{(2)} \to \Xi^2_k$ are never $C^1$ for non-Fuchsian Hitchin representations.
\end{remark}

\subsubsection{Differential of Intersection} In this paragraph, we recall the formula for the differential of the intersection of two subspaces in general position.

For a finite-dimensional real vector space $V$ and $k, l < \text{dim}(V)$ with $k + l > \text{dim}(V)$, let $$\mathscr{T}_{k,l}(V) = \{(U, W) \in \text{Gr}_k(V) \times \text{Gr}_l(V) \mid U + W = V \}$$ and let $\pitchfork : \mathscr{T}_{k,l} \to \text{Gr}_{l+k-n}(V)$ be the intersection map: $\pitchfork(U, W) = U \cap W$.
Recall the canonical model for $\text{T}_W \text{Gr}_k(V)$ as $\text{Hom}(W, V/W)$.
For $(U, W) \in \mathscr{T}_{k,l}(V)$ and $(\phi, \psi) \in \text{T}_{(U,W)}\mathscr{T}_{k,l}(V)$, we define $\{\phi + \psi \} \in \text{Hom}(U \cap W, V/(U \cap W)) = \text{T}_{U \cap W}\text{Gr}_{k+l-n}(V)$ as follows.

Since $k + l > n$, any $v \in V$ may be written as $v = a(v) + b(v)$ with $a(v) \in U$ and $b(v) \in W$.
Each of $a(v)$ and $b(v)$ are defined up to addition of elements of $U \cap W$.
Pick arbitrary lifts $\overline{\phi}: U \to V$ of $\phi$ and $\overline{\psi} : W \to V$ of $\psi$.
Note $b\left(\overline{\phi}(v)\right)$ and $a\left(\overline{\psi}(v)\right)$ ($v \in V$) are well-defined up to addition of elements of $U \cap W$.
So a map $\{\phi + \psi \} : U \cap W \to V/(U \cap W)$ is specified by $\{\phi + \psi \}(v) = \left[b\left(\overline{\phi}(v)\right) + a\left(\overline{\psi}(v)\right) \right]$ ($v \in U \cap W$).
Here, $[\cdot ]$ is equivalence modulo $U \cap W$.

\begin{lemma}[Intersection Derivative]\label{lemma-intersection-der}
    For all $(U, W) \in \mathscr{T}_{k,l}(V)$ and $(\phi, \psi) \in {\rm{T}}_{(U,W)}\mathscr{T}_{k,l}(V)$,
    \begin{align}
        D_{(U,W)}\pitchfork (\phi, \psi) = \{ \phi + \psi\}.
    \end{align}
\end{lemma}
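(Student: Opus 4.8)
The statement is a computation of the differential of the transverse-intersection map $\pitchfork : \mathscr{T}_{k,l}(V) \to \mathrm{Gr}_{k+l-n}(V)$, so the plan is to reduce it to a curve computation and then unwind the canonical identifications. First I would fix $(U,W) \in \mathscr{T}_{k,l}(V)$ and a tangent vector $(\phi,\psi)$, choose smooth curves $U(t), W(t)$ with $U(0)=U$, $W(0)=W$, $\dot U(0)=\phi$, $\dot W(0)=\psi$, and observe that by transversality (which is an open condition) we may assume $U(t)+W(t)=V$ for small $t$, so that $Z(t) := U(t)\cap W(t)$ is a smooth curve in $\mathrm{Gr}_{k+l-n}(V)$ with $Z(0)=U\cap W$. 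The goal is then to show $\dot Z(0)$, interpreted as an element of $\mathrm{Hom}(U\cap W, V/(U\cap W))$, equals $\{\phi+\psi\}$.

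\textbf{Key steps.} The cleanest implementation is via explicit local lifts. Choose a complement $C$ with $V = (U\cap W) \oplus C$, and realize $U(t)$ and $W(t)$ locally as graphs: pick a basis $e_1,\dots,e_{k+l-n}$ of $U\cap W$, extend by vectors $f_1,\dots,f_{k-(k+l-n)}$ to a basis of $U$ and $g_1,\dots,g_{l-(k+l-n)}$ to a basis of $W$ (so that together $e$'s, $f$'s, $g$'s form a basis of $V$ by transversality). Represent $U(t)$ by the moving basis $e_i + t\,\overline\phi(e_i) + O(t^2)$, $f_j + t\,\overline\phi(f_j) + O(t^2)$, where $\overline\phi : U \to V$ is a chosen lift of $\phi$, and similarly for $W(t)$ with $\overline\psi$. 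Then I would solve for vectors in $U(t)\cap W(t)$ to first order: a general element of $U(t)$ is $\sum a_i(e_i + t\overline\phi(e_i)) + \sum b_j(f_j + t\overline\phi(f_j))$ and one asks when it lies in $W(t)$; writing everything in the fixed basis and expanding to first order in $t$, the condition that the $f$-components vanish forces $b_j = O(t)$, and then matching the remaining components determines the deformation of the $Z$-part. Carrying this out, a vector of $Z(t)$ starting from $e_i \in U\cap W$ becomes $e_i + t\big(b(\overline\phi(e_i)) + a(\overline\psi(e_i))\big) + O(t^2)$ modulo $U\cap W$, where $a(\cdot), b(\cdot)$ are the (well-defined mod $U\cap W$) projections to $U$ and $W$ along the decomposition $V = U + W$ used in the statement. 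This is exactly $\{\phi+\psi\}(e_i)$. Finally I would check independence of all choices: of the lifts $\overline\phi,\overline\psi$ (changing $\overline\phi$ by a map into $U\cap W$ does not change $b(\overline\phi(v))$ mod $U\cap W$, since $U\cap W \subset U$ so $b$ of it is $0$ mod $U\cap W$ — similarly for $\overline\psi$), and of the complement $C$ and the auxiliary bases (the final answer is phrased intrinsically, so this is automatic once the curve computation is done in any one chart).

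\textbf{Main obstacle.} The only real subtlety — and where I would be most careful — is bookkeeping the ambiguities consistently: $a(v)$ and $b(v)$ are each defined only up to $U\cap W$, and $\overline\phi,\overline\psi$ are each defined only up to $\mathrm{Hom}(\cdot, U\cap W)$, and one must verify that the expression $[\,b(\overline\phi(v)) + a(\overline\psi(v))\,]$ is genuinely well-defined in $V/(U\cap W)$ and that no first-order cross terms from the simultaneous deformation of $U$ and $W$ have been dropped. The cross-term check is the substantive point: a priori $\dot Z(0)$ could involve a term bilinear in $\phi$ and $\psi$, and the computation must show it does not — this comes out because to first order the intersection condition decouples into ``project $\overline\phi(v)$ into $W$'' and ``project $\overline\psi(v)$ into $U$'' independently. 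An alternative, more invariant route avoiding coordinates altogether is to note that $\pitchfork$ is the restriction of the composition $\mathrm{Gr}_k(V) \times \mathrm{Gr}_l(V) \dashrightarrow \mathrm{Gr}_{k+l-n}(V)$ dual to the sum map under the identification $\mathrm{Gr}_m(V) \cong \mathrm{Gr}_{n-m}(V^*)$, and deduce the formula from the (easier, already-used) derivative-of-sum formula by dualizing; I would mention this as a remark but carry out the direct computation as the main proof since it is self-contained.
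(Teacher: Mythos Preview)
Your proposal is correct. The paper does not actually write out a proof; it declares the lemma an exercise and suggests one route: compute the derivative of the direct-sum map on Grassmannians, dualize via $\mathrm{Gr}_m(V)\cong\mathrm{Gr}_{n-m}(V^*)$, and apply the chain rule. That is exactly the alternative you mention at the end of your proposal. Your \emph{main} argument is instead the direct curve computation in an adapted basis, which is a genuinely different route. Your approach is self-contained and makes the well-definedness checks transparent, at the cost of some bookkeeping; the paper's suggested route is slicker once one already has the derivative of the sum map in hand, but hides the ambiguity-tracking inside the duality isomorphism. Either is fine here, and since you recover the paper's suggestion as a remark, there is nothing to add.
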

The proof is an exercise in differential geometry.
One solution takes the derivatives of projective duality and direct sum as maps on Grassmannians then applies the chain rule.

\subsubsection{Tangent Lines to Limit Curves} We now give an expression for the tangent line of the image of the limit map $\xi^k(\partial \Gamma) \subset \text{Gr}_k(\bbR^n)$ of a Hitchin representation $\rho$.
The proof is by a reduction to \cite{pozzettiSambarinoWienhard2021conformality}. We use the group invariance of the Frenet curve throughout the paragraph.

Let $\bbP \text{T} \text{Gr}_k(\bbR^n)$ be the projective tangent bundle of $\text{Gr}_k(\bbR^n)$ with projection $\pi_k$ to $\text{Gr}_k(\bbR^n)$ and define $$\mathscr{L}_\rho^k = \{ (x, \phi) \in \partial \Gamma \times \bbP \text{T} \text{Gr}_k(\bbR^n) \mid \pi_k(\phi) = \xi^k(x), \, \xi^{k-1}(x) = \text{ker} \, \phi, \, \text{Im}(\phi) \subset \xi^{k+1}(x)\}.$$
As the target $\xi^{k+1}(x)/\xi^k(x)$ of $\phi$ is one-dimensional, for each $x \in \partial \Gamma$ there is a unique point of the form $(x, \phi) \in \mathscr{L}_\rho^k$. Note $\mathscr{L}_\rho^k$ is closed in $\partial \Gamma \times \bbP \text{T}\text{Gr}_k(\bbR^n)$.

\begin{proposition}[Tangent Lines]
    For any $x\in \partial \Gamma$, the tangent line $\bbP{\rm{T}}_{\xi^k(x)}\xi^k(\partial \Gamma)$ is the unique $\phi_x \in \bbP {\rm{T}}_{\xi^k(x)}{\rm{Gr}}_k(\bbR^n)$ so $(x, \phi_x) \in \mathscr{L}_\rho^k$.
\end{proposition}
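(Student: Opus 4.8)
The plan is to reduce the statement to the known $C^1$-regularity of the first flag curve of a $(1,1,2)$-hyperconvex representation, applied to the exterior power $\Lambda^k\rho$. Recall that $\xi^1_{\Lambda^k\rho} = \mathrm{Pl}_{n,k}\circ\xi^k$, where $\mathrm{Pl}_{n,k}: \mathrm{Gr}_k(\bbR^n)\to \bbP(\Lambda^k\bbR^n)$ is the Plücker embedding, a smooth closed embedding. By the facts collected in \S\ref{sss-hyperconvexity}, since $\rho$ is Hitchin, $\Lambda^k\rho$ is $(1,1,2)$-hyperconvex, hence $\xi^1_{\Lambda^k\rho}(\partial\Gamma)\subset\bbP(\Lambda^k\bbR^n)$ is a $C^1$ submanifold with tangent line at $\xi^1_{\Lambda^k\rho}(x)$ equal to $\xi^2_{\Lambda^k\rho}(x)$. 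Since $\mathrm{Pl}_{n,k}$ is a smooth embedding, it follows that $\xi^k(\partial\Gamma)$ is a $C^1$ submanifold of $\mathrm{Gr}_k(\bbR^n)$ and that its tangent line at $\xi^k(x)$ is the preimage under $D\mathrm{Pl}_{n,k}$ of the tangent line of $\xi^1_{\Lambda^k\rho}(\partial\Gamma)$ at $\xi^1_{\Lambda^k\rho}(x)$.

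The substance of the argument is then to identify this tangent line as the element $\phi_x$ characterized by $(x,\phi_x)\in\mathscr{L}_\rho^k$, i.e. as the line of homomorphisms $\xi^k(x)\to\bbR^n/\xi^k(x)$ with kernel containing $\xi^{k-1}(x)$ and image contained in $\xi^{k+1}(x)/\xi^k(x)$. First I would record the well-known description of $\mathrm{T}_U\mathrm{Gr}_k(\bbR^n)=\mathrm{Hom}(U,\bbR^n/U)$ and of $D_U\mathrm{Pl}_{n,k}$: for $U = \langle v_1,\dots,v_k\rangle$ with corresponding decomposable vector $\omega = v_1\wedge\cdots\wedge v_k$, a homomorphism $\psi\in\mathrm{Hom}(U,\bbR^n/U)$ is sent to the class of $\sum_{i=1}^k v_1\wedge\cdots\wedge\widetilde{\psi(v_i)}\wedge\cdots\wedge v_k$ modulo $\omega$, where $\widetilde{\psi(v_i)}$ is any lift of $\psi(v_i)$ to $\bbR^n$. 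This map is injective, so it suffices to show that the image under $D\mathrm{Pl}_{n,k}$ of the candidate tangent line $\phi_x$ equals $\xi^2_{\Lambda^k\rho}(x)/\xi^1_{\Lambda^k\rho}(x)$ as a line in $\Lambda^k\bbR^n/\xi^1_{\Lambda^k\rho}(x)$.

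To do this I would choose an adapted basis of $\bbR^n$: vectors $v_1,\dots,v_n$ with $\xi^j(x)=\langle v_1,\dots,v_j\rangle$ for each $j$, so that $\xi^k(x)$ corresponds to $\omega = v_1\wedge\cdots\wedge v_k$ and, by the limit-compatibility half of the Frenet condition, $\xi^{k+1}_{\Lambda^k\rho}$-type data is controlled by wedge products differing in one slot. A homomorphism $\phi\in\phi_x$ has $\phi(v_i)=0$ for $i\le k-1$ and $\phi(v_k)\equiv c\,v_{k+1}\bmod \xi^k(x)$ for some scalar $c$; plugging into the Plücker differential formula, all terms with $i<k$ vanish and the $i=k$ term gives $c\,(v_1\wedge\cdots\wedge v_{k-1}\wedge v_{k+1})$ modulo $\omega$. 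On the other hand $\xi^2_{\Lambda^k\rho}(x)$ is, by the Frenet/limit-compatibility property for $\Lambda^k\rho$, spanned by $\xi^1_{\Lambda^k\rho}(x)$ together with $v_1\wedge\cdots\wedge v_{k-1}\wedge v_{k+1}$ — this is the standard computation that the osculating flag of $\xi^k$ in the Plücker picture is the "shift by one index" flag. Hence the two lines agree, proving the proposition. I would note that the edge cases $k=1$ (where the statement is exactly fact (2) of \S\ref{sss-hyperconvexity}) and $k=n-1$ require no separate treatment since the adapted-basis computation is uniform in $k$.

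The main obstacle I anticipate is purely bookkeeping: pinning down the description of $\xi^2_{\Lambda^k\rho}(x)$ in terms of the adapted basis of $\bbR^n$, i.e. verifying carefully that the second osculating subspace of the Plücker curve is spanned by $\omega$ and the single wedge $v_1\wedge\cdots\wedge v_{k-1}\wedge v_{k+1}$ rather than by some larger collection of shifted wedges. This is where the hyperconvex Frenet hypothesis (and its consequences in \cite{guichard2005dualite}, already invoked in the excerpt) does the real work, ensuring both the general-position of the relevant subspaces and the precise limit that pins down the one-dimensional direction. Everything else is linear algebra with the Plücker embedding and an application of the chain rule, as in the discussion following Lemma \ref{lemma-intersection-der}.
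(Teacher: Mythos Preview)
Your approach is essentially the paper's: reduce via the Pl\"ucker embedding to the $C^1$-regularity statement for $\xi^1_{\Lambda^k\rho}(\partial\Gamma)$ with tangent line $\xi^2_{\Lambda^k\rho}$, then compute $D\mathrm{Pl}_{n,k}$ on the candidate $\phi_x$ and match it to $\xi^2_{\Lambda^k\rho}(x)$. The one real difference is how the obstacle you flag---identifying $\xi^2_{\Lambda^k\rho}(x)$ as the span of $v_1\wedge\cdots\wedge v_k$ and $v_1\wedge\cdots\wedge v_{k-1}\wedge v_{k+1}$ in your adapted basis---is handled.

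The paper sidesteps this cleanly: since $\xi^k(\partial\Gamma)$ is already known to be $C^1$ and $\mathscr{L}_\rho^k$ is closed in $\partial\Gamma\times\bbP\mathrm{T}\,\mathrm{Gr}_k(\bbR^n)$, it suffices to verify the claim on the dense set of attracting fixed points $\gamma^+$. There the adapted basis can be taken to be the eigenbasis of $\rho(\gamma)$, and dynamics preservation of Anosov limit maps gives $\xi^2_{\Lambda^k}(\gamma^+)=\mathrm{span}\{v_1\wedge\cdots\wedge v_k,\ v_1\wedge\cdots\wedge v_{k-1}\wedge v_{k+1}\}$ for free. Your proposed justification via ``the Frenet/limit-compatibility property for $\Lambda^k\rho$'' is not quite available from what the paper records: $\Lambda^k\rho$ is only asserted to be $(1,1,2)$-hyperconvex, not hyperconvex Frenet, so you would need either a separate functoriality argument for the $\{P_1,P_2\}$-Anosov limit maps under exterior powers, or---more simply---the density-and-closedness reduction the paper uses.
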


\begin{proof}
    Since $\xi^k(\partial \Gamma)$ is an embedded $C^1$ submanifold of $\text{Gr}_k(\bbR^n)$ and $\mathscr{L}_\rho^k$ is closed, it suffices to prove the claim for the dense set of $\partial \Gamma$ consisting of attracting fixed points $\gamma^+$ of $\gamma \in \Gamma - \{e\}$.

    So let $\gamma \in \Gamma - \{e\}$, and let $v_1, ..., v_n$ denote a basis of eigenvectors of $\rho(\gamma)$, ordered by decreasing modulus of the corresponding eigenvalues $\lambda_1, \lambda_2, ..., \lamdba_n$.
    Then $\xi^k(\gamma^+)$ is the span of $v_1, ..., v_k$.
    Let $\Lambda^k \rho$ denote the $k$-th exterior power of $\rho$, which is $(1,1,2)$ hyperconvex.
    Denote the first two entries of the associated limit curve by $\xi^1_{\Lambda^k}, \xi^2_{\Lambda^k}$.
    By \cite{pozzettiSambarinoWienhard2021conformality} Prop. 7.4, $\xi^1_{\Lambda^k}(\partial \Gamma)$ is an embedded $C^1$ submanifold of $\bbP(\Lambda^k \bbR^{n})$ with $ \text{T}_{\xi^{1}_{\Lambda^k}(x)}\xi^1(\partial \Gamma) = \xi^2_{\Lambda^k}(x)$ for all $x \in \partial \Gamma$.

    By the dynamics preservation of limit maps of Anosov representations,
    \begin{align}
        \xi^1_{\Lambda^k}(\gamma^+) &= \text{span} \{ v_1 \wedge v_2 \wedge \cdots \wedge v_k\}, \\
        \xi^2_{\Lambda^k}(\gamma^+) &= \text{span} \{v_1 \wedge v_2 \wedge \cdots \wedge v_{k}, v_1 \wedge v_2 \wedge \cdots \wedge v_{k-1} \wedge v_{k+1}\}. \nonumber
    \end{align}

    Since the Pl\"ucker embedding $\text{Pl}_{k,n}$ is a smooth embedding and $\xi^1_{\Lambda^k}(x) = \text{Pl}_{n,k}(\xi^k(x))$ for $x \in \partial \Gamma$, to establish the claim it suffices to show $D_{\xi^k(\gamma^+)}\text{Pl}_{n,k}(\phi_{\gamma^+})$ is contained in $\xi^2_{\Lambda^k}(\gamma^+)$.

    A representative of the projective class of $\phi_{\gamma^+}$ is given on the basis $v_1, ..., v_n$ of $\bbR^n$ by $\phi_{\gamma^+}(v_i) =\delta_{k, k+1} v_j$ in Kronecker $\delta$-notation.
    A representative $\psi^k_{\gamma^+}$ of the equivalence class of $\xi^2_{\Lambda^k}(\gamma^+) \in \bbP \text{T}_{\xi^1_{\Lambda^k}(x)} \bbP(\Lambda^k \bbR^n)$ is given on the standard basis $\{w_{I}\}$ indexed by positively ordered $k$-element subsets $I \subset (1, ...,n)$ of $\Lambda^k \bbR^n$ induced by the basis $v_1, ..., v_{n}$ of $\bbR^n$ by $\psi^k_{\gamma^+} w_{I} = \delta_{I_1 I_{2}} w_J$ where $I_1 = (1, ..., k)$ and $I_2 = (1, ..., {k-1}, {k+1})$.

    Now, $A_t = \text{span}(v_1, ..., v_{k-1}, v_k + t v_{k+1})$ in $\text{Gr}_k(\bbR^n)$ is tangent to $\phi_{\gamma^+}$ at $t= 0$.
    For $t > 0$,
    \begin{align*}
        \text{Pl}_{n,k}(A_t) = \text{span}\{ (v_1 \wedge v_2 \wedge \cdots \wedge v_k) + t(v_1 \wedge v_2 \wedge \cdots \wedge v_{k-1} \wedge v_{k+1}) \}
    \end{align*} is tangent to $\xi^2_{\Lambda^k}(\gamma^+)$, as desired.
\end{proof}

\subsubsection{Ambient Regularity} We now address the regularity of $\Xi^2_\rho$.

\begin{proof}[Proof of Prop. \ref{prop-regular-off-frenet}]
    Let $\mathscr{D}_{\rho}^k \subset \mathscr{T}_{k,n-k+1}$ be the submanifold of $\xi^k(\partial \Gamma)\times \xi^{n-k+1}(\partial \Gamma) $ given by \begin{align*}
       \mathscr{D}_\rho^k =  \xi^k(\partial \Gamma)\times \xi^{n-k+1}(\partial \Gamma) - \{(\xi^k(x), \xi^{n-k+1}(x)) \mid x \in \partial \Gamma \},
    \end{align*} and let $\pitchfork_{\rho}^k$ be the restriction of $\pitchfork$ to $\mathscr{D}_\rho^k$. 
    Then $\Xi^2_k$ is the image of $\pitchfork_{\rho}^k$. 
    As $\pitchfork$ is smooth on $\mathscr{T}_{k,n-k+1}$ and $\mathscr{D}_{\rho}^k$ is $C^1$, to show that $p = \xi^k(x) \cap \xi^{n-k+1}(y)$ $((x,y) \in \partial \Gamma^{(2)})$ is a $C^1$ point of $T_\rho^k$, it suffices to show that the differential $D_{(\xi^k(x), \xi^{n-k+1}(y))} \pitchfork_{\rho}^k$ has rank $2$.

    Given $(x, y) \in \partial \Gamma^{(2)}$ with corresponding points $(x, \phi) \in \mathscr{L}_\rho^k$ and $(y, \psi) \in \mathscr{L}_{\rho}^{n-k+1}$, the image of $D_{(\xi^k(x), \xi^{n-k+1}(y))} \pitchfork_{\rho}^k$ is spanned by $\{\phi\}$ and $\{\psi\}$.
    Note $\{\phi\}$ and $\{\psi\}$ each have rank at most $1$ as $\phi$ and $\psi$ have rank $1$.
    By hyperconvexity of $\xi$, \begin{align*}
        \ker \phi \cap \xi^{n-k+1}(y) &= \xi^{k-1}(x) \cap \xi^{n-k+1}(y) = \{0\}, \\
        \ker \psi \cap \xi^{k}(x) &= \xi^{n-k}(y) \cap \xi^{k}(x) = \{ 0\}.
    \end{align*}

    We conclude that any lifts $\overline{\phi}, \overline{\psi}: p \to \bbR^n$ of $\phi|_p$ and $\psi|_p$ have rank $1$.
    As $\text{Im}(\phi) \not\subset \xi^k(x)$ and $\text{Im}(\psi) \not\subset \xi^{n-k+1}(y)$ the maps $\{\phi\}$ and $\{\psi\}$ have nontrivial image and hence rank $1$.

    By the definitions of $\mathscr{L}_\rho^k$, $\mathscr{L}_{\rho}^{n-k+1}$, and $\{\cdot\}$, the image $\{\phi\}(p)$ is $(\xi^{n-k+1}(y) \cap \xi^{k+1}(x))/p$ and the image $\{\psi\}(p)$ is $(\xi^{k}(x) \cap \xi^{n-k+2}(y))/p$.
    As $\bbR^n/p = (\xi^k(x)/p) \oplus (\xi^{n-k+1}(y)/p)$, the images of $\{\phi\}$ and $\{\psi\}$ have nontrivial projections to distinct factors of the direct sum decomposition of $\bbR^n/p$ and so do not coincide.
    So $\{\phi\}$ and $\{\psi\}$ are linearly independent and $D_p \pitchfork$ has rank $2$, as desired.
    The description of the tangent space to $T_\rho^k$ at $p$ follows from our characterization of the images of $\{\phi\}$ and $\{\psi\}$ in the above.
    \end{proof}

\subsubsection{Product Structure} The corollary of the regularity of $T_\rho^k$ that we use later is an approximate-product metric model for compact subsets of $\Xi^2_\rho$.
To set notation, let $d_{\Xi^2}$ be the metric on $\Xi^2_\rho$ induced from its embedding in $\RP^{n-1}$.
Let $d_{\text{Gr}}$ be the product metric induced on $\xi^k(\partial \Gamma) \times \xi^{n-k+1}(\partial \Gamma)$ by the factors' embeddings in $\text{Gr}_k(\bbR^n)$ and $\text{Gr}_{n-k+1}(\bbR^n)$.

\begin{corollary}[Intersection Bilipschitz]\label{cor-bilipschitz}
    For any compact subset $K$ of $\Xi^2_k$, there is a constant $C_K > 0$ so that the map $(\pitchfork_\rho^k)^{-1}: K \to \xi^k(\partial \Gamma) \times \xi^{n-k+1}(\partial \Gamma)$ is $C_K$-bilipshitz onto its image with respect to the metrics $d_{\rm{Gr}}$ and $d_{\Xi}$.
\end{corollary}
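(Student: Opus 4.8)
The plan is to deduce the corollary directly from Proposition~\ref{prop-regular-off-frenet}. The map $\pitchfork_\rho^k$ restricts to a bijection from $\mathscr{D}_\rho^k$ onto $\Xi^2_k$; since $\mathscr{D}_\rho^k$ is a $C^1$ submanifold of $\xi^k(\partial\Gamma)\times\xi^{n-k+1}(\partial\Gamma)$ and Proposition~\ref{prop-regular-off-frenet} shows $\pitchfork_\rho^k$ is a $C^1$ map whose differential has full rank $2$ everywhere on $\mathscr{D}_\rho^k$, the inverse function theorem says $\pitchfork_\rho^k$ is a local $C^1$ diffeomorphism onto $\Xi^2_k$, and the global bijectivity (from Lemma~\ref{lemma-boquet-param}, equivalently the injectivity of $\Phi_\rho$) upgrades this to a global $C^1$ diffeomorphism $\mathscr{D}_\rho^k \to \Xi^2_k$. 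In particular $(\pitchfork_\rho^k)^{-1}$ is $C^1$ on all of $\Xi^2_k$.

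\medskip

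First I would fix a compact subset $K \subset \Xi^2_k$. Its preimage $(\pitchfork_\rho^k)^{-1}(K)$ is a compact subset of $\mathscr{D}_\rho^k$, and on a compact set a $C^1$ map with everywhere-nonsingular differential is bilipschitz onto its image: the norm of the differential of $\pitchfork_\rho^k$ and the norm of the differential of its local inverse are continuous, hence bounded above on $(\pitchfork_\rho^k)^{-1}(K)$ and $K$ respectively, giving uniform Lipschitz bounds in both directions once one notes that the bijection is proper and the metrics $d_{\mathrm{Gr}}$, $d_{\Xi^2}$ restricted to these compact sets are geodesic up to a bounded multiplicative factor (or: pass to chord-length estimates and use compactness to convert infinitesimal bilipschitz bounds into global ones). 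Taking $C_K$ to be the larger of the two constants gives the claim. The only mild care needed is the standard one: an infinitesimally bilipschitz map between (subsets of) Riemannian manifolds is bilipschitz on compact sets because the distance functions are comparable to path-length, which follows from local charts and compactness; here both spaces embed in compact ambient manifolds ($\RP^{n-1}$, and a product of Grassmannians) so this is automatic.

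\medskip

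I expect the proof to be essentially immediate given Proposition~\ref{prop-regular-off-frenet}, so there is no real obstacle; the one point that deserves a sentence is why pointwise (infinitesimal) control of the differential yields a genuine bilipschitz estimate for the ambient metrics $d_{\mathrm{Gr}}$ and $d_{\Xi^2}$ rather than for intrinsic path-metrics — this is handled by compactness of $K$ and of its preimage, since on compact subsets of a manifold the ambient (chordal) distance and the intrinsic distance are comparable. Everything else is a routine application of the inverse function theorem and extreme-value arguments for continuous functions on compact sets.
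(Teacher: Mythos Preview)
Your proposal is correct and follows essentially the same approach as the paper: both deduce from the proof of Proposition~\ref{prop-regular-off-frenet} that $\pitchfork_\rho^k|_{\mathscr{D}_\rho^k}$ is an injective $C^1$ immersion, and then use compactness to conclude bilipschitzness on compact sets. The paper's proof is simply a terser version of yours, phrasing the key step as ``the restriction of $\pitchfork_\rho^k$ to any compact submanifold with boundary of $\mathscr{D}_\rho^k$ is a $C^1$ embedding'' rather than spelling out the inverse function theorem and the ambient-versus-intrinsic metric comparison.
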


\begin{proof}
    In the proof of Prop. \ref{prop-regular-off-frenet}, we saw $\pitchfork_\rho^k$ restricted to $\mathscr{D}_\rho^k$ is an injective $C^1$ immersion.
    So the restriction of $\pitchfork_\rho^k$ to any compact submanifold with boundary of $\mathscr{D}_\rho^k$ is a $C^1$ embedding.
    So $(\pitchfork_{\rho}^k)^{-1}$ is bilipschitz on any compact set in $\Xi^2_k$.
\end{proof}

\subsection{Foliations and Sliding Maps}\label{ss-foliations-sliding}

The tori $T_\rho^k$ have some further structure that we record here.
Namely, each torus $T_\rho^k$ has two canonical foliations by Frenet curves and there are natural bijections between the leaves of these foliations.
We also establish some regularity features of the natural bijections.
The rest of the paper is independent of this subsection.

\subsubsection{Definitions} The Frenet Restriction Lemma \ref{lemma-restriction} results in the two natural foliations of $\partial \Gamma^{2}$ mapping to foliations of $T_\rho^k$ by hyperconvex Frenet curves, which we record:
\begin{definition}
    Let $\mathcal{G}_{k}$ be the foliation of $T_\rho^k$ by hyperconvex Frenet curves in copies of $\RP^{k-1}$ with leaves given for $x \in \partial \Gamma$ by $g_x = \{ \Phi_\rho^k(x, y) \mid y \in \partial \Gamma \}$.
    Let $\mathcal{H}_{k}$ be the foliation of $T_\rho^k$ by in copies of $\RP^{n-k}$ with leaves given for $x \in \partial \Gamma$ by $h_x = \{ \Phi_\rho^k(y, x) \mid y \in \partial \Gamma \}$.   
\end{definition}
    
There are distinguished bijections between leaves of $\mathcal{G}_k$ (resp. $\mathcal{H}_k$), coming from the global product structure of $T_\rho^k$.
The case of $\mathcal{H}_2$ for $\text{PSL}_4(\bbR)$ played a role in \cite{nolte2023leaves}.

\begin{definition}[Sliding Maps]\label{def-sliding}
    Let $x_1, x_2 \in \partial \Gamma$. Define ${\rm{sl}}^{\mathcal{G}}_{x_1 \to x_2}: g_{x_1} \to g_{x_2}$ with $g_{x_1}, g_{x_2}$ leaves of $\mathcal{G}_k$ by:
    \begin{align*}
         {\rm{sl}}^{\mathcal{G}}_{x_1 \to x_2} : \xi^k(x_1) \cap \xi^{n+1-k}(y) &\mapsto \xi^k(x_2) \cap \xi^{n+1-k}(y) & y \neq x_1, x_2 \\
         \xi^k(x_1) \cap \xi^{n+1-k}(x_2) & \mapsto \xi^1(x_2) \\
         \xi^1(x_1) & \mapsto \xi^k(x_2) \cap \xi^{n+1-k}(x_1).
    \end{align*}
    Similarly, define ${\rm{sl}}^{\mathcal{H}}_{x_1 \to x_2}: h_{x_1} \to h_{x_2}$ with $h_{x_1}$, $h_{x_2}$ leaves of $\mathcal{H}_k$ by 
       \begin{align*}
         {\rm{sl}}^\mathcal{H}_{x_1 \to x_2} : \xi^k(y) \cap \xi^{n+1-k}(x_1) &\mapsto \xi^k(y) \cap \xi^{n+1-k}(x_2) & y \neq x_1, x_2 \\
         \xi^k(x_2) \cap \xi^{n+1-k}(x_1) & \mapsto \xi^1(x_2) \\
         \xi^1(x_1) & \mapsto \xi^k(x_1) \cap \xi^{n+1-k}(x_2).
    \end{align*}
\end{definition}

\subsubsection{Sliding Regularity} We next address the regularity of sliding maps.

\begin{lemma}
    For $x_1, x_2 \in \partial \Gamma$ and $k \in \{1, ..., k_n\}$ and $\mathcal{K} \in \{\mathcal{G}_k , \mathcal{H}_k\}$, the restriction of ${\rm{sl}}_{x_1 \to x_2}^{\mathcal{K}}$ with domain $k_{x_1}$ to $k_{x_1} - [(\Xi^1_\rho \cap k_{x_1}) \cup ({\rm{sl}}_{x_1 \to x_2}^{\mathcal{K}})^{-1}(\Xi^1_\rho)]$ is $C^1$ with nonzero derivative.
\end{lemma}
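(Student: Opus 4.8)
The plan is to realize each sliding map, away from its singular locus, as a composition $G\circ F^{-1}$ in which $F$ and $G$ are one-parameter slices of the transverse-intersection map $\pitchfork_\rho^k$ already analyzed in Proposition \ref{prop-regular-off-frenet}, and to transfer the $C^1$-immersion statement proved there to these slices. First I would clear away the degenerate cases. For $k = 1$ one has $T_\rho^1 = \Xi^1_\rho$ (with the natural convention $\Phi_\rho^1(x,y) = \xi^1(x)$), so the leaves lie in $\Xi^1_\rho$ and the set on which regularity is asserted is empty; and if $x_1 = x_2$ the sliding map is the identity. So assume $2 \leq k \leq k_n$ and $x_1 \neq x_2$, and treat $\mathcal{K} = \mathcal{G}_k$, the case $\mathcal{K} = \mathcal{H}_k$ being identical with the two Grassmannian factors interchanged.

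Next I would pin down the domain precisely. By injectivity of $\pitchfork_\rho^k$ on $\mathscr{D}_\rho^k$ (Lemma \ref{lemma-boquet-param}), every point of $g_{x_1}$ is $\Phi_\rho^k(x_1,y) = \xi^k(x_1)\cap\xi^{n+1-k}(y)$ for a unique $y \in \partial\Gamma$. Using the general position property of $\xi$ --- specifically that $\xi^1(z)\cap\xi^{n+1-k}(w) = \{0\}$ whenever $z \neq w$ and $k \geq 2$ --- one checks $\Xi^1_\rho \cap g_{x_1} = \{\xi^1(x_1)\}$ (the point with parameter $y = x_1$) and $(\mathrm{sl}^{\mathcal{G}}_{x_1\to x_2})^{-1}(\Xi^1_\rho) = \{\xi^k(x_1)\cap\xi^{n+1-k}(x_2)\}$ (the point with parameter $y = x_2$), and that these two points are distinct. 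Hence the domain in the statement equals $\{\Phi_\rho^k(x_1,y) : y \in \partial\Gamma\setminus\{x_1,x_2\}\}$, and on it Definition \ref{def-sliding} reads $\mathrm{sl}^{\mathcal{G}}_{x_1\to x_2}\colon \xi^k(x_1)\cap\xi^{n+1-k}(y)\mapsto \xi^k(x_2)\cap\xi^{n+1-k}(y)$.

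Now set $U = \xi^{n+1-k}(\partial\Gamma)\setminus\{\xi^{n+1-k}(x_1),\xi^{n+1-k}(x_2)\}$ and define $F,G\colon U\to\RP^{n-1}$ by $F(W) = \xi^k(x_1)\cap W$ and $G(W) = \xi^k(x_2)\cap W$, so that $\mathrm{sl}^{\mathcal{G}}_{x_1\to x_2} = G\circ F^{-1}$ on the domain above. Each of $F$ and $G$ is the restriction of $\pitchfork_\rho^k$ along a slice $\{\xi^k(x_i)\}\times U$ of $\mathscr{D}_\rho^k$; deleting the two flags from $\xi^{n+1-k}(\partial\Gamma)$ is exactly what keeps the slice inside $\mathscr{D}_\rho^k$, and the slice is a $C^1$ submanifold since $\xi^{n+1-k}(\partial\Gamma)$ is a $C^1$ submanifold of $\text{Gr}_{n+1-k}(\bbR^n)$ (the $(1,1,2)$-hyperconvexity facts applied to $\Lambda^{n+1-k}\rho$, via the Pl\"ucker embedding). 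By the computation of $D\pitchfork_\rho^k$ in the proof of Proposition \ref{prop-regular-off-frenet} (which rests on Lemma \ref{lemma-intersection-der}), the differential of $F$ at $W = \xi^{n+1-k}(y)$ is $\psi\mapsto\{0+\psi\} = \{\psi\}$, shown there to have rank exactly $1$; so $F$ is an injective $C^1$ immersion of the $1$-manifold $U$, and likewise for $G$.

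Finally I would upgrade these to local $C^1$ embeddings: around any point, restrict $F$ to a compact subarc of $U$; a continuous injection of a compact space into a Hausdorff space is a homeomorphism onto its image, so $F$ is a $C^1$ embedding there, its image a $C^1$ submanifold of $\RP^{n-1}$, and $F^{-1}$ is $C^1$ with nonvanishing derivative; the same holds for $G$. Composing, $\mathrm{sl}^{\mathcal{G}}_{x_1\to x_2} = G\circ F^{-1}$ is $C^1$ with everywhere-nonzero derivative. The $\mathcal{H}_k$ case runs verbatim with $F(V) = V\cap\xi^{n+1-k}(x_1)$ and $G(V) = V\cap\xi^{n+1-k}(x_2)$ for $V \in \xi^k(\partial\Gamma)\setminus\{\xi^k(x_1),\xi^k(x_2)\}$, the pertinent differential now being $\phi\mapsto\{\phi\}$, again of rank $1$. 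The one step that is not a direct citation of Proposition \ref{prop-regular-off-frenet} is this last passage from injective immersion to local embedding --- the mild obstacle being that an injective immersion of a non-compact $1$-manifold need not be globally embedded --- but since the assertion is local on the leaf the compact-subarc argument suffices.
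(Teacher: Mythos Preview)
Your proof is correct and follows essentially the same route as the paper: both write the sliding map as $G\circ F^{-1}$ with $F,G$ the one-parameter slices of $\pitchfork_\rho^k$ (the paper calls these $I_{x_1}^k$ and $I_{x_2}^k$) and invoke the rank computation from Proposition~\ref{prop-regular-off-frenet}. The only cosmetic difference is in justifying that $F^{-1}$ is $C^1$: the paper first observes via the Frenet Restriction Lemma that each leaf $g_{x_i}$ is itself the image of a hyperconvex Frenet curve and hence an embedded $C^1$ submanifold, whereas you obtain the same conclusion locally by restricting to compact subarcs.
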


\begin{proof}
    We take $\mathcal{K} = \mathcal{G}_k$; the other case is analogous.

    Since any hyperconvex Frenet curve $\eta : \partial \Gamma \to \mathcal{F}(\bbR^n)$ has $C^1$ image in $\RP^{n-1}$, the Frenet Restriction Lemma implies that both $g_{x_1}$ and $g_{x_2}$ are embedded $C^1$ submanifolds of $\RP^{n-1}$.

    For $x \in \partial \Gamma$, Let $I_{x}^k : (\xi^{n-k+1}(\partial \Gamma) - \xi^{n-k+1}(x)) \to g_x^k$ be $\xi^{n-k+1}(y) \mapsto \xi^{n-k+1}(y) \cap \xi^k(x)$.
    The analysis of the proof of Prop. \ref{prop-regular-off-frenet} shows $I_x^k$ is $C^1$ with differential of full rank. As $\text{sl}^{\mathcal{K}}_{x_1 \to x_2}= I_{x_2}^k \circ (I_{x_1}^k)^{-1}$, the claim follows from the chain rule.
\end{proof}

\section{Bouquet Coupling}\label{s-bounds}

We prove Theorems C and D in this section.
Most of the discussion is on the bounds in Theorem C, with the upper bound the main point.
We prove the lower bound in \S \ref{ss-bouquet-lower-bounds} and the upper bound in \S \ref{ss-bouquet-upper-bounds}.
We address completeness in \S \ref{ss-metrics-completeness}.
A first observation is:
\begin{lemma}
    $D_B$ satisfies the triangle inequality, with possible value $\infty$.
\end{lemma}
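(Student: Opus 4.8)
The plan is to deduce the triangle inequality for $D_B$ from the elementary behavior of supremal H\"older exponents under composition, exactly as in the Fuchsian prototype discussed in the introduction. The key point is that the bouquet coupling maps compose well: for any three Hitchin representations $\rho_a, \rho_b, \rho_c \in \text{Hit}_n(S)$ we have $\Phi_{ac} = \Phi_{\rho_a} \circ \Phi_{\rho_c}^{-1} = (\Phi_{\rho_a} \circ \Phi_{\rho_b}^{-1}) \circ (\Phi_{\rho_b} \circ \Phi_{\rho_c}^{-1}) = \Phi_{ab} \circ \Phi_{bc}$, using that $\Phi_{\rho_b}^{-1} \circ \Phi_{\rho_b}$ is the identity on $\mathcal{B}_{\rho_b}$ (this uses Lemma \ref{lemma-boquet-param}, that each $\Phi_\rho$ is a homeomorphism onto $\mathcal{B}_\rho$).

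First I would record the composition law for supremal H\"older exponents: if $f : X \to Y$ is $\alpha$-H\"older and $g : Y \to Z$ is $\beta$-H\"older between metric spaces, then $g \circ f$ is $\alpha\beta$-H\"older, since $d_Z(g(f(x)), g(f(x'))) \leq C_g\, d_Y(f(x),f(x'))^\beta \leq C_g C_f^\beta\, d_X(x,x')^{\alpha\beta}$. Hence $\alpha_{g \circ f} \geq \alpha_{g} \cdot \alpha_f$, where $\alpha_{(\cdot)}$ denotes the supremal H\"older exponent. Applying this with $f = \Phi_{bc}$, $g = \Phi_{ab}$, and using $\alpha' < \alpha_{ab}$, $\alpha'' < \alpha_{bc}$ arbitrary, gives $\alpha_{ac} \geq \alpha_{ab}\,\alpha_{bc}$.

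Next I would observe that the supremal H\"older exponents $\alpha_{ab}$ are valued in $(0,1]$. They are at most $1$ because $\mathcal{B}_{\rho_b}$ is compact (so a map is $\alpha$-H\"older for some $\alpha$ only if it is $\alpha'$-H\"older for all $\alpha' \leq \alpha$, and being $\alpha$-H\"older for $\alpha > 1$ on a connected set forces the map to be constant, which $\Phi_{ab}$ is not); and they are positive because $\Phi_{ab}$ is a homeomorphism between compact subsets of $\RP^{n-1}$ with, away from $\xi^1_{\rho_b}(\partial\Gamma)$, the $C^1$ structure from Proposition \ref{prop-regular-off-frenet}, so some positive H\"older exponent is achieved (one may, if desired, invoke the forthcoming bounds of Theorem C, but positivity alone follows already from local compactness plus the product-structure reduction of Corollary \ref{cor-bilipschitz} near generic points together with a cruder estimate near the singular curve; for the present lemma only finiteness of $D_B$, i.e.\ $\alpha_{ab} > 0$, or the allowed value $\infty$, is needed). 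Taking $-\log$ of $\alpha_{ac} \geq \alpha_{ab}\,\alpha_{bc}$ then yields $D_B(\rho_a, \rho_c) \leq D_B(\rho_a, \rho_b) + D_B(\rho_b, \rho_c)$, with the convention that if $\alpha_{ab} = 0$ for some pair then the corresponding term is $+\infty$ and the inequality holds trivially.

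The only subtlety — and the ``main obstacle'' such as it is — is purely bookkeeping: one must confirm that the metrics $d_{\Xi}$ used to measure H\"older exponents on $\mathcal{B}_{\rho_a}, \mathcal{B}_{\rho_b}, \mathcal{B}_{\rho_c}$ are the intrinsic ones from the ambient $\RP^{n-1}$ (they are, by Definition \ref{def-bouquet-map}), so that the composition estimate applies verbatim with fixed reference metrics. No properties of torus bouquets beyond Lemma \ref{lemma-boquet-param} are needed; the statement is formal once the coupling maps are known to be honest homeomorphisms that compose.
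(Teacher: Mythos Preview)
Your proposal is correct and follows the same approach as the paper, which gives the one-line proof ``Optimal H\"older exponents are in $[0,1]$ and supermultiplicative under composition.'' Your discussion of positivity of $\alpha_{ab}$ is unnecessary here since the lemma explicitly allows the value $\infty$, but the core argument---composition of coupling maps plus supermultiplicativity of H\"older exponents---is exactly the paper's.
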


\begin{proof}
    Optimal H\"older exponents are in $[0,1]$ and supermultiplicative under composition.
\end{proof}

\subsection{Lower Bounds}\label{ss-bouquet-lower-bounds}
We now prove the lower bounds for $D_B$ in Theorem C by examining explict sequences of points as in \S \ref{s-teich-space-case} (c.f. also \cite{benoist2004convexesI}).

The following is the general form of the upper bound on H\"older exponents that we prove and use as in \S \ref{s-teich-space-case}.
It is relevant to us because of the Frenet Restriction Lemma.

\begin{lemma}\label{lemma-reg-upper-bounds}
    Suppose $V$ is a real vector space of dimension $m$ and $g, h \in {\rm{PGL}}(V)$ are real-diagonalizable with $\lambda(g) = (\ell_1(g), ..., \ell_m(g))$ and $\lamdba(h) = (\ell_1(h), ..., \ell_m(h))$ each in $\mathring{\mathfrak{a}}^+$.
    Let $v_1(g), ..., v_m(g)$ and $v_1(h), ..., v_m(h)$ be eigenvectors corresponding to $\ell_1(g), ..., {\ell_m(g)}$ and $\ell_1(h), ... ,\ell_m(h)$, respectively.

    Suppose $M_1$ and $M_2$ are $C^1$ circles in $\bbP(V)$ so $[v_1(g)] \in M_1$ with ${\rm{T}}_{[v_1(g)]} M_1 = [v_1(g)] \oplus[v_2(g)]$ and $[v_1(h)] \in M_2$ with ${\rm{T}}_{[v_1(h)]} M_2 = [v_1(h)] \oplus [v_2(h)]$. Let $\phi: M_1 \to M_2$ be a homeomorphism so that $\phi([v_1(g)]) = [v_1(h)]$ and $\phi(gp) = h \phi(p)$ for all $p \in M_1$.

    Then $\phi$ is not $\alpha$-H\"older for any $\alpha > \ell^{\alpha_1}(h)/\ell^{\alpha_1}(g)$.
\end{lemma}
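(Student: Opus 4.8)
The plan is to mimic the lower-bound computation from the Teichm\"uller case in \S\ref{ss-computation-teich}, exhibiting an explicit sequence of pairs of points on which the H\"older quotient blows up. First I would reduce to a convenient normalization: since $g$ and $h$ are real-diagonalizable with Jordan projections in $\mathring{\mathfrak a}^+$, I work in the eigenbases $v_1(g),\dots,v_m(g)$ and $v_1(h),\dots,v_m(h)$, so that $[v_1(g)]$ is the attracting fixed point of $g$ acting on $\bbP(V)$ and similarly $[v_1(h)]$ for $h$. The hypothesis $\phi([v_1(g)])=[v_1(h)]$ together with equivariance $\phi\circ g=h\circ\phi$ means $\phi$ carries the $g$-dynamics near the attractor to the $h$-dynamics near its attractor. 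Because $M_1,M_2$ are $C^1$ circles through the respective attracting fixed points with tangent lines $[v_1]\oplus[v_2]$, near these points $M_1$ and $M_2$ are graphs over the $[v_1]$--$[v_2]$ plane tangent to the $v_2$-direction, and the action of $g$ (resp.\ $h$) on the affine coordinate along $M_i$ near the fixed point is, to first order, multiplication by $e^{-(\ell_1(g)-\ell_2(g))}=e^{-\ell^{\alpha_1}(g)}$ (resp.\ $e^{-\ell^{\alpha_1}(h)}$).

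The core estimate then runs exactly as in \S\ref{ss-computation-teich}. Fix any point $p_0\in M_1$ distinct from $[v_1(g)]$ and close enough to it that the $C^1$ graph description applies; set $q_0=[v_1(g)]$, and consider the sequences $p_N=g^N p_0$, $q_N=g^Nq_0=q_0$ in $M_1$. Using a background metric $\delta$ on $\bbP(V)$ and local $C^1$ coordinates on $M_1$ near $q_0$ (which are bilipschitz to $\delta$ restricted to $M_1$ on a neighborhood), one gets $\delta(p_N,q_N)\asymp e^{-N\ell^{\alpha_1}(g)}$ up to a multiplicative constant as $N\to\infty$, since the derivative of $g$ along $M_1$ at $q_0$ has modulus $e^{-\ell^{\alpha_1}(g)}$. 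On the target side, $\phi(p_N)=h^N\phi(p_0)$ and $\phi(q_N)=[v_1(h)]$, so similarly $\delta(\phi(p_N),\phi(q_N))\asymp e^{-N\ell^{\alpha_1}(h)}$. Hence for any $\alpha>0$,
\begin{align*}
\frac{\delta(\phi(p_N),\phi(q_N))}{\delta(p_N,q_N)^\alpha}\asymp \frac{e^{-N\ell^{\alpha_1}(h)}}{e^{-N\alpha\,\ell^{\alpha_1}(g)}}=e^{N(\alpha\,\ell^{\alpha_1}(g)-\ell^{\alpha_1}(h))},
\end{align*}
which tends to $\infty$ as $N\to\infty$ whenever $\alpha>\ell^{\alpha_1}(h)/\ell^{\alpha_1}(g)$. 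Therefore $\phi$ is not $\alpha$-H\"older for such $\alpha$, which is the claim.

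\textbf{Main obstacle.} The one genuinely technical point, where I would spend care, is the asymptotic $\delta(p_N,q_N)\asymp e^{-N\ell^{\alpha_1}(g)}$ (and its target analogue). This requires knowing that $g$ contracts $M_1$ toward $[v_1(g)]$ at \emph{exactly} the rate governed by the top gap $\ell_1(g)-\ell_2(g)$, not some smaller gap. This is where the hypothesis on the tangent line $\mathrm{T}_{[v_1(g)]}M_1=[v_1(g)]\oplus[v_2(g)]$ is essential: in the affine chart for $\bbP(V)$ centered at $[v_1(g)]$ adapted to the eigenbasis, $g$ acts diagonally with the coordinate along $[v_j]$ scaled by $e^{-(\ell_1(g)-\ell_j(g))}$, and the slowest of these directions \emph{present in} $\mathrm{T}_{[v_1(g)]}M_1$ is the $[v_2(g)]$-direction, with rate $e^{-\ell^{\alpha_1}(g)}$; since $M_1$ is $C^1$ and tangent to that direction, the distance along $M_1$ from $g^Np_0$ to the fixed point decays at precisely that rate (the $C^1$ hypothesis lets one compare $\delta$ along $M_1$ to the arclength coordinate, and a standard one-dimensional contraction argument—essentially that $\lim_N \tfrac1N\log\delta(g^Np_0,q_0)=-\ell^{\alpha_1}(g)$—finishes it). Everything else is bookkeeping parallel to the Teichm\"uller computation, and no quantitative proximality input is needed for this direction.
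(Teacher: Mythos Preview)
Your proposal is correct and follows essentially the same approach as the paper: pick a point near the attracting fixed point, iterate under $g$, use the tangent line hypothesis $\mathrm{T}_{[v_1(g)]}M_1=[v_1(g)]\oplus[v_2(g)]$ to pin down the contraction rate along $M_1$ as exactly $e^{-\ell^{\alpha_1}(g)}$, and compute the H\"older quotient. The paper makes this concrete by choosing explicit affine charts $\mathcal{A}_g,\mathcal{A}_h$ adapted to the eigenbases (with $[v_1]$ at the origin and $\mathrm{span}(v_2,\dots,v_m)$ at infinity) and tracking the $x_1$-coordinate, which decays exactly by $e^{-\ell^{\alpha_1}(g)}$ per iterate; your more abstract phrasing in terms of the derivative of $g|_{M_1}$ at the fixed point amounts to the same computation.
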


\begin{proof}
        Take affine charts $\mathcal{A}_{g}$ and $\mathcal{A}_h$ for $\bbP(V)$ so that in $\mathcal{A}_g$:
        \begin{enumerate}
            \item $[v_1(g)]$ is at the origin,
            \item $[\text{span}(v_2, ...,v_m)] $ is the hyperplane at infinity,
           \item For $j = 2, ...,m$, the $x_{j-1}$ axis is the intersection of $[\text{span}(v_1,v_j)]$ with $\mathcal{A}_g$.
        \end{enumerate}
        Arrange for the same conditions for $h$ to hold on $\mathcal{A}_h$.
        Outside of a neighborhood of the hyperplane at infinity, the metrics in $\mathcal{A}_g$ and $\mathcal{A}_h$ are multiplicatively comparable to a reference metric on $\bbP(V)$.
        Let $x_1^g$ and $x_1^h$ be the coordinate functions of $\mathcal{A}_g$ and $\mathcal{A}_h$.
        
        Consider the following sequence of points $(z_n, w_n) \in M_1$.
        Take $z_n = [v_1(g)]$ for all $n$.
        From the hypothesis on the tangent line to $M_1$ at $[v_1(g)]$ take $w_0$ so that there is an interval $I_g \subset M_1$ with endpoints $[v_1(g)]$ and $w_0$ so that $\frac{1}{2}|x_1^g(w)| \leq d_{\mathcal{A}_g}(0, w) \leq 2 |x_1^g(w)|$ for all $w \in I_g$, and the analogous claim holds for $h$ in $\mathcal{A}_h$.
        Take $w_n = g^n w_0$ for all $n > 0$.

        Now let $\alpha > \ell^{\alpha_1}(h)/\ell^{\alpha_1}(g)$.
        Then for an $n$-independent constant $C$,
        \begin{align*}
            \frac{d_{\mathcal{A}_h}(\phi(z_n), \phi(w_n))}{d_{\mathcal{A}_g}(z_n, w_n)^\alpha} &= \frac{d_{\mathcal{A}_h}(0,h^n \phi(w_0))}{d_{\mathcal{A}_g}(0,g^nw_0)^\alpha} \geq  \frac{|x_1^h(h^n \phi(w_0))|}{4|x_1^g (g^nw_0)|^\alpha} = \frac{e^{-n \ell^{\alpha_{1}}(h)}|x_1^h(\phi(w_0))|}{4e^{-\alpha n\ell^{\alpha_{1}}(g)}|x_1^g(w_0)|^\alpha} =  \frac{e^{n\alpha \ell^{\alpha_{1}}(g)}}{e^{\ell^{\alpha_1}(h)}} C,
        \end{align*} which grows arbitrarily large.
     So $\phi$ is not $\alpha$-H\"older.
\end{proof}

We now establish our lower bounds for $D_B$. Let us set notation.
For Hitchin representations $\rho_a, \rho_b: \Gamma \to \text{PSL}_n(\bbR)$ and $\gamma \in \Gamma - \{e\}$, define $r_{ab}^k(\gamma) = \ell^{\alpha_k}_{\rho_a}(\gamma)/\ell^{\alpha_k}_{\rho_b}(\gamma)$, and let $h_{ab}^k = \inf_{\gamma \in \Gamma - \{e\}} r_{ab}^k(\gamma)$. A useful remark is that $h_{ab}^k = h_{ab}^{n-1-k}$ because $\ell_{\rho}^{\alpha_k}(\gamma) = \ell_{\rho}^{\alpha_{n-1-k}}(\gamma^{-1})$.

\begin{proposition}
    For $n > 3$ and $\rho_a, \rho_b \in {\rm{Hit}}_n(S)$, ${\rm{max}}_{i=1,...,n-1}d_{\rm{Th}}^{\alpha_k}(\rho_a, \rho_b) \leq D_B(\rho_a, \rho_b)$.
\end{proposition}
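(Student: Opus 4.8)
The plan is to reduce the multi-torus statement to a one-parameter family of lower bounds coming from single group elements, exactly as in the Teichm\"uller case of \S \ref{s-teich-space-case}. Fix $k \in \{1, \dots, n-1\}$ and a $\gamma \in \Gamma - \{e\}$; I want to exhibit a sequence of pairs of points in $\mathcal{B}_{\rho_b}$ on which $\Phi_{ab}$ fails to be $\alpha$-H\"older for any $\alpha > r_{ab}^k(\gamma)$. By the symmetry $h_{ab}^k = h_{ab}^{n-1-k}$, it suffices to treat $k \le k_n$, and for $k \ge 2$ the relevant circle lives on the torus $T_{\rho_b}^k$, while $k = 1$ is handled using $T_{\rho_b}^2$ together with its Frenet foliation. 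Concretely, I would take the leaf $g^b_{\gamma^-}$ of $\mathcal{G}_k$ in $T_{\rho_b}^k$ through $\xi^1_{\rho_b}(\gamma^+)$: by the Frenet Restriction Lemma \ref{lemma-restriction}, this leaf is a hyperconvex Frenet curve in the projective space $\bbP(\xi^k_{\rho_b}(\gamma^-))$, hence $C^1$, with tangent line at $\xi^1_{\rho_b}(\gamma^+)$ equal to $\xi^2_{\rho_b}(\gamma^+)$ restricted into that subspace by the $(1,1,2)$-hyperconvexity statement. The coupling map $\Phi_{ab}$ carries this leaf to the corresponding leaf $g^a_{\gamma^-}$ of $\mathcal{G}_k$ in $T_{\rho_a}^k$, conjugating the action of $\rho_b(\gamma)$ to that of $\rho_a(\gamma)$, and both actions fix $\xi^1(\gamma^+)$.

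The main step is then a direct application of Lemma \ref{lemma-reg-upper-bounds}. I would set $V = \xi^k_{\rho_b}(\gamma^-) \cong \xi^k_{\rho_a}(\gamma^-)$ (a real vector space of dimension $k$), let $g$ be the restriction of $\rho_b(\gamma)$ and $h$ the restriction of $\rho_a(\gamma)$ to these invariant subspaces. Since $\rho_a, \rho_b$ are Hitchin, the eigenvalues of $\rho_c(\gamma)$ are real and distinct, so $g$ and $h$ are real-diagonalizable with Jordan projections in $\mathring{\mathfrak{a}}^+$ of $\bbP(V)$; the top two eigenlines of $g$ are $\xi^1_{\rho_b}(\gamma^+)$ and the line in $\xi^2_{\rho_b}(\gamma^+) \cap \xi^k_{\rho_b}(\gamma^-)$, and similarly for $h$. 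Taking $M_1 = g^b_{\gamma^-}$, $M_2 = g^a_{\gamma^-}$, $\phi = \Phi_{ab}|_{M_1}$, the hypotheses of Lemma \ref{lemma-reg-upper-bounds} are met, so $\Phi_{ab}$ is not $\alpha$-H\"older for any $\alpha > \ell^{\alpha_1}(h)/\ell^{\alpha_1}(g)$. It remains to identify $\ell^{\alpha_1}(g)$ and $\ell^{\alpha_1}(h)$: the top eigenvalue gap of $\rho_c(\gamma)$ restricted to $\xi^k_{\rho_c}(\gamma^-)$ is the difference of the logarithms of the two largest eigenvalues of $\rho_c(\gamma)$, which is precisely $\ell^{\alpha_1}_{\rho_c}(\gamma)$. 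Wait — this gives the $k=1$ flavor of bound for every $k$; to recover $d_{\mathrm{Th}}^{\alpha_k}$ I should instead choose the leaf through an \emph{intermediate} point of the torus whose stabilizer acts on a relevant line-pair with gap $\ell^{\alpha_k}$.

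So more carefully: for the $k$-th bound I take a point of $T_{\rho_b}^k$ of the form $p = \xi^k_{\rho_b}(\gamma^+) \cap \xi^{n+1-k}_{\rho_b}(\gamma^-)$, which is a $\rho_b(\gamma)$-fixed point lying on $\Xi^2_k$, hence a $C^1$ point of $T_{\rho_b}^k$ by Prop. \ref{prop-regular-off-frenet}, and I run a sequence in the leaf $g^b_{\gamma^+}$ of $\mathcal{G}_k$ through $p$. This leaf is a hyperconvex Frenet curve in $\bbP(\xi^k_{\rho_b}(\gamma^+))$ by Lemma \ref{lemma-restriction}; the point $p$ corresponds to its first flag entry there, and the eigenvalue gap of $\rho_b(\gamma)|_{\xi^k_{\rho_b}(\gamma^+)}$ between the two top eigenlines adjacent to $p$ is $\ell_k(\rho_b(\gamma)) - \ell_{k+1}(\rho_b(\gamma)) = \ell^{\alpha_k}_{\rho_b}(\gamma)$, using that $\xi^k(\gamma^+) = \mathrm{span}(v_1, \dots, v_k)$ and the Frenet-restricted curve's second flag entry at $p$ picks out the $(k+1)$-st eigenline. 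Applying Lemma \ref{lemma-reg-upper-bounds} with this data and the conjugate leaf in $T_{\rho_a}^k$ yields that $\Phi_{ab}$ is not $\alpha$-H\"older for $\alpha > \ell^{\alpha_k}_{\rho_a}(\gamma)/\ell^{\alpha_k}_{\rho_b}(\gamma) = r_{ab}^k(\gamma)$. The cases $k \in \{1, n-1\}$: here $\mathcal{B}_\rho$ need not directly contain a leaf realizing $\ell^{\alpha_1}$, so I use $T_\rho^2$ and the same Frenet-restriction argument in $\bbP(\xi^2(\gamma^+))$ (a $2$-dimensional space), where the only eigenvalue gap is exactly $\ell^{\alpha_1}$, and by the symmetry $h_{ab}^1 = h_{ab}^{n-2}$ this also covers $k = n-1$. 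Taking the infimum over $\gamma$ and then the maximum over $k = 1, \dots, n-1$ gives $\max_k d_{\mathrm{Th}}^{\alpha_k}(\rho_a,\rho_b) \le D_B(\rho_a,\rho_b)$, after invoking Theorem A to rewrite $-\log h_{ab}^k = d_{\mathrm{Th}}^{\alpha_k}(\rho_a,\rho_b)$.

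The bookkeeping obstacle — which I expect to be the only real subtlety — is twofold: first, confirming that the ambient metric $d_{\Xi^2}$ on $\mathcal{B}_{\rho_b}$ near $p$ is multiplicatively comparable to the intrinsic metric of the leaf $g^b_{\gamma^+}$ near $p$, so that the leaf-wise failure of H\"older continuity supplied by Lemma \ref{lemma-reg-upper-bounds} transfers to a failure for $\Phi_{ab} : \mathcal{B}_{\rho_b} \to \mathcal{B}_{\rho_a}$ in the ambient metrics; this follows because $p \in \Xi^2_k$ is a $C^1$ point (Prop. \ref{prop-regular-off-frenet}) and the leaf is an embedded $C^1$ submanifold (Lemma \ref{lemma-restriction}), so their metrics are bilipschitz-comparable on a compact neighborhood of $p$. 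Second, carefully matching the eigenline labels in the Frenet-restricted curve with the simple root $\alpha_k$ — i.e. checking that restriction to $\xi^k(\gamma^+)$ really does keep eigenlines $1$ through $k$ and that the tangent-line hypothesis of Lemma \ref{lemma-reg-upper-bounds} holds with $[v_1], [v_2]$ being the top two of these — which is where the hyperconvexity and Frenet conditions are used, but is otherwise routine.
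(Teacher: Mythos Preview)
Your overall strategy---reduce to Lemma \ref{lemma-reg-upper-bounds} via the Frenet Restriction Lemma on a $\gamma$-invariant leaf through an eigenline of $\rho_c(\gamma)$---is exactly the paper's approach. However, there is a concrete error in your choice of leaf for $k \geq 2$, and it invalidates the eigenvalue identification.

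You take the leaf $g_{\gamma^+}$ of the foliation $\mathcal{G}_k$, which lies in $\bbP(\xi^k_{\rho_b}(\gamma^+)) = \bbP(\mathrm{span}(v_1,\dots,v_k))$. The point $p = \xi^k(\gamma^+)\cap\xi^{n+1-k}(\gamma^-) = [v_k]$ is then the \emph{smallest}-eigenvalue eigenline of $\rho_b(\gamma)$ on this subspace, not the top one that Lemma \ref{lemma-reg-upper-bounds} requires. More importantly, your claim that ``the Frenet-restricted curve's second flag entry at $p$ picks out the $(k+1)$-st eigenline'' cannot be correct, since $v_{k+1}\notin\xi^k(\gamma^+)$. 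The Frenet restriction with $t_0=\gamma^+$, $D=k$ gives $\xi^2_{\gamma^+,k}(\gamma^-)=\xi^{n-k+2}(\gamma^-)\cap\xi^k(\gamma^+)=\mathrm{span}(v_{k-1},v_k)$, so the tangent at $p$ is $[v_{k-1}]\oplus[v_k]$. Applying Lemma \ref{lemma-reg-upper-bounds} here (with $\gamma^{-1}$, so that $[v_k]$ becomes the attractor) yields the bound for $\alpha_{k-1}$, not $\alpha_k$; after running $k$ over $\mathcal{K}_n$ and invoking the symmetry $h_{ab}^j=h_{ab}^{n-j}$, you still miss the middle root(s).

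The repair is exactly what the paper does: for $k\in\mathcal{K}_n$ use the leaf $h_{\gamma^-}=\{\Phi_\rho^k(x,\gamma^-):x\in\partial\Gamma\}$ of the \emph{other} foliation $\mathcal{H}_k$, which lies in $\bbP(\xi^{n+1-k}(\gamma^-))=\bbP(\mathrm{span}(v_k,\dots,v_n))$. Now $p=[v_k]$ is the \emph{top} eigenline of the restricted action of $\rho_b(\gamma)$, the tangent at $p$ is $\mathrm{span}(v_k,v_{k+1})$ by the Frenet Restriction Lemma with $t_0=\gamma^-$, $D=n-k+1$, and Lemma \ref{lemma-reg-upper-bounds} delivers the bound $\alpha_{ab}\leq r_{ab}^k(\gamma)$ directly. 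The $k=1$ case is handled, as in the paper, by applying Lemma \ref{lemma-reg-upper-bounds} to $\xi^1(\partial\Gamma)=\Xi^1_\rho$ itself at $\xi^1(\gamma^+)$. Your discussion of metric comparability via Proposition \ref{prop-regular-off-frenet} is fine and is left implicit in the paper.
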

    
\begin{proof}
    Denote by $\alpha_{ab}$ be the supremum of the $\alpha > 0$ so that $\Phi_{ab}$ is $\alpha$-H\"older.

    Let $\gamma \in \Gamma - \{e\}$.
    Lemma \ref{lemma-reg-upper-bounds} applied to $\xi^1_{\rho_b}(\partial \Gamma)$ and $\xi^1_{\rho_a}(\partial \Gamma)$ at the points $\xi^1_{\rho_b}(\gamma^+)$ and $\xi^1_{\rho_a}(\gamma^+)$ with the map $\Phi_{ab}$ shows that $\alpha_{ab} \leq r_{ab}^1(\gamma)$, so that $\alpha_{ab} \leq h_{ab}^1(\gamma)$.

    Next fix $k \in 2, ..., k_n$.
    Consider the curves $\eta_{\gamma^-, n-k+1}$, and $\nu_{\gamma^-, n-k+1}$ induced from the Frenet Restriction Lemma by $\xi_{\rho_b}$ and $\xi_{\rho_a}$, respectively.
    Let $M_b = \eta_{\gamma^-, n-k+1}(\partial \Gamma)$ and $M_a = \nu_{\gamma^-, n-k+1}(\partial \Gamma)$.
    Then $\rho_{c}(\gamma)|_{\xi^{n-k+1}(\gamma^-)}$ has eigenvalues $v_k(\rho_c(\gamma)), v_{k+1}(\rho_c(\gamma)), ..., v_n(\rho_c(\gamma))$ for $c \in \{a,b\}$.
    Additionally, $\eta^1_{\gamma^-, n-k+1}(x) = \Phi_{\rho_b}^k(x, \gamma^-)$ and $\nu^1_{\gamma^-, n-k+1}(x) = \Phi_{\rho_a}^k(x, \gamma^-)$ for $x \in \partial \Gamma$.
    We focus on the points $\eta^1_{\gamma^-, n-k+1}(\gamma^+) = [v_k(\rho_b(\gamma))]$ and $\nu^1_{\gamma^-, n-k+1}(\gamma^+) = [v_k(\rho_a(\gamma))]$.    
    
    Since any hyperconvex Frenet curve $\xi = (\xi^1, \xi^2, ..., \xi^l)$ has $C^1$ image of $\xi^1$ in projective space with tangent line $\xi^2(x)$ at $\xi^1(x)$ for all $x \in \partial\Gamma$, the tangent lines to $M_c$ at $[v_k(\rho_c(\gamma))]$ are $[v_{k}(\rho_c(\gamma))] \oplus [ v_{k+1}(\rho_c(\gamma))]$ for $c \in \{a,b\}$.
    Lemma \ref{lemma-reg-upper-bounds} applies to the restriction of $\Phi_{ab}$ to these curves, and produces an upper bound $\alpha_{ab} \leq r_{ab}^k$, so that $\alpha_{ab} \leq h_{ab}^k$.

    So $\max_{k=1,...,k_n} d_{\text{Th}}^{\alpha_k}(\rho_a, \rho_b) \leq D_B(\rho_a, \rho_b)$, and the equality $h_{ab}^k = h_{ab}^{n-1-k}$ gives the claim.
\end{proof}

\subsection{Upper Bounds}\label{ss-bouquet-upper-bounds}
What remains to be shown in Theorem C is the upper bound on $D_B$.
The strategy roughly parallels the computation of lower bounds for H\"older constants in \S \ref{s-teich-space-case}, with adaptations to the technical challenges of our setting.
In particular, we split our analysis for points close to and far away from the image of $\xi^1$.
See \S \ref{ss-outline}.

\subsubsection{Estimates Away from the Frenet Curve}
Away from $\Xi^1_\rho$, the regularity of $\Xi^2_\rho$ allows for a reduction to the regularity of the curves $\xi^k(\partial \Gamma)$ in Grassmannians.
Put \begin{align}
        \alpha_{ab}^L = \min_{k=1, ..., n-1} \left( \inf_{\gamma \in \Gamma - \{e\}} \frac{\ell_{\rho_a}^{\alpha_k}(\gamma)}{\ell_{\rho_b}^{\alpha_k}(\gamma)}\right) 
    \end{align}

\begin{lemma}\label{lemma-holder-regularity-off-frenet}
    Let $\rho_a, \rho_b \in {\rm{Hit}}_n(S)$ and $K$ be a compact subset of $\Xi^2_{\rho_b}$.
    Then the restriction of $\Phi_{ab}$ to $K$ is $\alpha_{ab}^L$-H\"older.
\end{lemma}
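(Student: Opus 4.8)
\textbf{Proof plan for Lemma \ref{lemma-holder-regularity-off-frenet}.}

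The plan is to transfer the regularity question from the torus bouquet to the limit curves in Grassmannians using the bilipschitz product structure established in Corollary \ref{cor-bilipschitz}, and then invoke the sharp H\"older estimates for the $k$-coupling maps $\phi_{ab}^k$ coming from Tsouvalas' theorem (\S \ref{s-simple-roots-computation}). First I would cover $K$ by finitely many pieces, each contained in a single torus $T_{\rho_b}^k$ (since $\mathcal{B}_{\rho_b}$ is a finite union of the $T_{\rho_b}^k$ glued along $\Xi^1_{\rho_b}$, and $K \subset \Xi^2_{\rho_b}$ avoids the gluing locus, each component of $K$ lies in exactly one $T_{\rho_b}^k$, $k \in \mathcal{K}_n$); it suffices to bound the H\"older exponent on each piece. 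Because $\Phi_{ab}$ takes $T_{\rho_b}^k$ to $T_{\rho_a}^k$ and conjugates the $\Gamma$-actions, on such a piece $\Phi_{ab}$ is identified, via $\pitchfork_{\rho_b}^k$ and $\pitchfork_{\rho_a}^k$, with the restriction of the product map $\phi_{ab}^k \times \phi_{ab}^{n-k+1}$ on $\xi^k_b(\partial\Gamma) \times \xi^{n-k+1}_b(\partial\Gamma)$, where $\phi_{ab}^k = \xi^k_a \circ (\xi^k_b)^{-1}$ as in Definition \ref{def-k-couple-map}.

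The key steps, in order: (1) Using Corollary \ref{cor-bilipschitz}, fix for each piece a constant $C$ so that $(\pitchfork_{\rho_b}^k)^{-1}$ is $C$-bilipschitz from the piece onto a compact subset of $\xi^k_b(\partial\Gamma) \times \xi^{n-k+1}_b(\partial\Gamma)$ (with metrics $d_\Xi$ and $d_{\text{Gr}}$), and similarly an estimate for $\pitchfork_{\rho_a}^k$ on the image piece; here I must also check the image $\Phi_{ab}(K)$ is a compact subset of $\Xi^2_{\rho_a}$, which follows because $\Phi_{ab}$ is a homeomorphism carrying $\Xi^1_{\rho_b}$ to $\Xi^1_{\rho_a}$. (2) Reduce to showing $\phi_{ab}^k$ is $\alpha_{ab}^L$-H\"older on compact subsets of $\xi^k_b(\partial\Gamma)$ for every $k$: by \S \ref{s-simple-roots-computation} (Tsouvalas' theorem applied to the $(1,1,2)$-hyperconvex representations $\Lambda^k\rho_a, \Lambda^k\rho_b$, transported through the Pl\"ucker embedding), $\phi_{ab}^k$ is globally $\alpha_{ab}^k$-H\"older with $\alpha_{ab}^k = \inf_{\gamma}\ell^{\alpha_k}_{\rho_a}(\gamma)/\ell^{\alpha_k}_{\rho_b}(\gamma) \geq \alpha_{ab}^L$. (3) Since H\"older exponents on bounded sets only improve when the exponent is lowered (all diameters are bounded), each of $\phi_{ab}^k$ and $\phi_{ab}^{n-k+1}$ is $\alpha_{ab}^L$-H\"older on the relevant compact set, hence so is the product map $\phi_{ab}^k\times\phi_{ab}^{n-k+1}$ in the $d_{\text{Gr}}$ metric. (4) Composing with the bilipschitz identifications from step (1) and using that a bilipschitz map is in particular $1$-H\"older (so composition with an $\alpha$-H\"older map stays $\alpha$-H\"older, up to adjusting the multiplicative constant), conclude $\Phi_{ab}|_K$ is $\alpha_{ab}^L$-H\"older. (5) Take the worst of the finitely many multiplicative constants over the cover of $K$.

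The main obstacle I anticipate is \emph{bookkeeping the two Grassmannian factors correctly}: the tangent space formula in Proposition \ref{prop-regular-off-frenet} shows $T_{\rho_b}^k$ near a point $\xi^k(x)\cap\xi^{n-k+1}(y)$ genuinely varies in both the $\xi^k(\partial\Gamma)$-direction and the $\xi^{n-k+1}(\partial\Gamma)$-direction, so one must verify that under $\pitchfork_{\rho}^k$ the map $\Phi_{ab}$ really corresponds to $\phi_{ab}^k$ in the first factor and $\phi_{ab}^{n-k+1}$ in the second — and then note $\alpha_{ab}^{n-k+1} = \alpha_{ab}^{n-k}$ (reindex via $\ell^{\alpha_j}_\rho(\gamma) = \ell^{\alpha_{n-j}}_\rho(\gamma^{-1})$, taking care with the off-by-one in the superscript conventions) so that both factors are controlled by the quantities appearing in the definition of $\alpha_{ab}^L$. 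A secondary, purely technical point is that $d_{\text{Gr}}$ is defined as a \emph{product} metric, so the H\"older estimate for the pair is immediate from the estimates for each coordinate; no further work beyond the triangle inequality is needed there. Everything else is routine.
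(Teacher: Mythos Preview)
Your proposal is correct and follows essentially the same route as the paper: decompose $K$ into compact pieces lying in each $T_{\rho_b}^k$, factor $\Phi_{ab}$ on each piece as $\pitchfork_{\rho_a}^k \circ (\phi_{ab}^k \times \phi_{ab}^{n-k+1}) \circ (\pitchfork_{\rho_b}^k)^{-1}$, and combine the bilipschitz bound of Corollary~\ref{cor-bilipschitz} with Tsouvalas' H\"older estimate for the factor maps. Your anticipated reindexing obstacle is a non-issue (and the stated identity $\alpha_{ab}^{n-k+1}=\alpha_{ab}^{n-k}$ is off; the symmetry gives $\alpha_{ab}^{j}=\alpha_{ab}^{n-j}$): since $\alpha_{ab}^L$ is by definition the minimum over \emph{all} $k\in\{1,\dots,n-1\}$ and $n-k+1$ lies in that range for $k\in\mathcal{K}_n$, the inequality $\alpha_{ab}^{n-k+1}\geq \alpha_{ab}^L$ holds directly.
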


\begin{proof}
From Lemma \ref{lemma-boquet-param} it follows that $K$ is a disjoint union $K_1 \sqcup ... \sqcup K_{k_n}$ of compact subsets of $\Xi_k^2$.
It suffices to prove the result for the individual compact sets $K_k$.

So let $k \in \{2, ..., k_n\}$ be given.
Denote by $\phi_{ab}^k \times \phi_{ab}^{n-k+1}$ the product map $\xi^{k}_{\rho_b}(\partial \Gamma) \times \xi^{n-k+1}_{\rho_b}(\partial \Gamma) \to\xi^{k}_{\rho_a}(\partial \Gamma) \times \xi^{n-k+1}_{\rho_a}(\partial \Gamma) $ induced by $\phi_{ab}^{k}$ and $\phi_{ab}^{n-k+1}$. 
Note $\phi_{ab}^k \times \phi_{ab}^{n-k+1}$ is $\alpha_{ab}^L$-H\"older because the optimal H\"older exponent of a product map is the minimum of the optimal H\"older exponents of its factors.
Also, by definition, $\Phi_{ab} = \pitchfork_{\rho_a}^k \circ (\phi_{ab}^k \times \phi_{ab}^{n-k+1}) \circ \pitchfork_{\rho_b}^{-1}$ on $K_k$.
Since H\"older regularity is invariant under bilipschitz mappings, Cor. \ref{cor-bilipschitz} here and Theorem 1.9 of \cite{tsouvalas2023holder} show that $\Phi_{ab}$ restricted to $K_k$ is $\alpha_{ab}^L$-H\"older.
\end{proof}

\begin{remark}
    Giving a direct proof of Lemma \ref{lemma-holder-regularity-off-frenet} with the standard approach in the literature for similar estimates (e.g. \cite{guichard2005regularity}, \cite{tsouvalas2023holder}, the below) seems quite challenging because of the presence of fixed-points of $\rho_a(\gamma)$ and $\rho_b(\gamma)$ $(\gamma \in \Gamma)$ of intermediate-magnitude eigenvalues in $\Xi^2_\rho$.
    In particular, the regularity of $\Xi^2_\rho$ appears essential to the tractability of the above estimate.
\end{remark}

\subsubsection{Loxodromy}\label{ss-quant-loxodromy}
In this paragraph, we recall the relevant improvement to proximality to our setting and some useful lemmas from work of Guichard \cite{guichard2005regularity}.

Loxodromy is an improvement to proximality that includes all eigenvalue gaps.
We say that $g \in \text{GL}_n(\bbR)$ is \textit{loxodromic} if $g$ is diagonalizable over $\mathbb{R}$ with eigenvalues $\lambda_1, ..., \lambda_n$ of $g$ listed with multiplicity, and $|\lambda_i|, |\lambda_j|$ are distinct for all $i \neq j$.
Hitchin representations $\rho \in \text{Hit}_n(S)$ have the property that $\rho(\gamma)$ is loxodromic for all $\gamma \in \Gamma -\{e\}$ \cite{labourie2004anosov}.

We adopt the convention that $\lambda_1, ...,\lambda_n$ are ordered by decreasing modulus, and the corresponding vectors are labelled $v_1, ..., v_n$ and are taken to have norm $1$ with respect to a background inner product on $\bbR^n$.
We also write $\ell_i = \log |\lambda_i|$ for $i=1,...,n$.
If $g$ is loxodromic, then for all $k = 1,..., n-1$ the $k$-th exterior power $\Lambda^k g$ acts proximally on $\mathbb{P}(\Lambda^k V)$.
For $g$ loxodromic, the map $\lambda: g \mapsto (\ell_1(g), ..., \ell_n(g))$ coincides with the Jordan projection $\lambda$.

Let $\delta_0$ be a metric induced on $\bbP(V)$ by an inner product on $V$, from realizing $\bbP(V)$ as the quotient of the unit sphere in $V$ by negation.
Then the inner product inducing $\delta_0$ induces an inner product on $\Lambda^k V$ and hence metrics $\delta_0^k$ on $\bbP(\Lambda^kV)$ for $k=1,...,n-1$.
If $g$ is loxodromic, with eigenspaces $[v_1], [v_2], ..., [v_n]$, taking norm-$1$ representatives $v_i$ with respect to the background inner product, then specifying a new inner product in which $\{v_i\}_{1,..., n}$ is orthonormal produces new metrics $\delta_g$ on $\bbP(V)$ and $\delta_g^k$ on $\bbP(\Lambda^kV)$ for all $k = 1, ..., n-1$.

We say $g \in \text{GL}(V)$ is \textit{$(r,\varepsilon)$-loxodromic} if $\Lambda^k g$ is $(r,\varepsilon)$-proximal with respect to $\delta_0^k$ for all $k = 1,..., n-1$.
If, furthermore, $g^{-1}$ is $(r,\varepsilon)$-loxodromic, we say that $g$ is \textit{$(r,\varepsilon)$-biloxodromic}.
The following is a standard consequence of Abels-Margulis-Soifer's work \cite{abelsMargulisSoifer1995semigroups}:

\begin{fact}[Corollary of \cite{abelsMargulisSoifer1995semigroups}]\label{fact-loxodromy}
    Let $\rho_a, \rho_b$ be Hitchin, let $r > 0$ be sufficiently small, and let $r > \varepsilon > 0$. Then there is a finite set $F \subset \Gamma$ so that for all $\gamma \in \Gamma$ there is some $\eta \in F$ so that $\rho_a(\eta \gamma)$ and $\rho_b(\eta \gamma)$ are $(r, \varepsilon)$-loxodromic.
\end{fact}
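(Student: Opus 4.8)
The plan is to run the mechanism behind Fact~\ref{lemma-teich-proximality} on all of the exterior power representations simultaneously. First I would unwind the definitions from \S\ref{ss-quant-loxodromy}: $g \in \mathrm{GL}_n(\bbR)$ is $(r,\varepsilon)$-loxodromic exactly when each $\Lambda^k g$ $(k = 1, \dots, n-1)$ is $(r,\varepsilon)$-proximal on $\bbP(\Lambda^k\bbR^n)$ for the metric $\delta_0^k$. Since $\rho_a,\rho_b$ are Hitchin, $\rho_a(\gamma)$ and $\rho_b(\gamma)$ are loxodromic for every $\gamma \in \Gamma-\{e\}$ \cite{labourie2004anosov}, so all $2(n-1)$ of the linear maps $\Lambda^k\rho_a(\gamma)$, $\Lambda^k\rho_b(\gamma)$ are proximal, with attracting lines $\mathrm{Pl}_{n,k}(\xi^k_{\rho_c}(\gamma^+))$. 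It therefore suffices to produce one finite set $F\subset\Gamma$ so that for each $\gamma\in\Gamma$ some $\eta\in F$ makes all $2(n-1)$ elements $\Lambda^k\rho_a(\eta\gamma),\Lambda^k\rho_b(\eta\gamma)$ $(r,\varepsilon)$-proximal at once; this is the simultaneous (semigroup) form of the Abels--Margulis--Soifer theorem applied to this finite family, exactly as in \cite{benoist1997proprietes}~\S3.1 and used this way in \cite{guichard2005regularity}. Reading the first sentence backwards then gives that $\rho_a(\eta\gamma)$ and $\rho_b(\eta\gamma)$ are $(r,\varepsilon)$-loxodromic, which is the claim (a harmless finite enlargement of $F$ disposes of the finitely many $\gamma$ for which the chosen $\eta$ has $\eta\gamma=e$).

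The main obstacle I anticipate is not conceptual but is the one place where this goes beyond copying Fact~\ref{lemma-teich-proximality}: for $k\ge 2$ the representation $\Lambda^k\rho_c$ need not be strongly irreducible on $\rho_c(\Gamma)$ --- already $\Lambda^k$ of an irreducible Fuchsian $\mathrm{PSL}_2(\bbR)$-representation decomposes, and in general $\rho_c(\Gamma)$ is only Zariski dense in a proper reductive subgroup of $\mathrm{PSL}_n(\bbR)$ --- so one cannot invoke a version of Abels--Margulis--Soifer that presumes irreducibility of the ambient space. The way I would handle this, following \cite{guichard2005regularity}, is to apply the theorem inside the ``Cartan'' subrepresentation $W^k_c\subseteq\Lambda^k\bbR^n$ spanned by the lines $\mathrm{Pl}_{n,k}(\xi^k_{\rho_c}(\partial\Gamma))$: by the hyperconvex Frenet property $W^k_c$ is a strongly irreducible, proximal $\rho_c(\Gamma)$-submodule containing the attracting line of every $\Lambda^k\rho_c(\gamma)$ with $\gamma\ne e$, and with a complementary $\rho_c(\Gamma)$-invariant subspace. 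One then upgrades $(r,\varepsilon')$-proximality on $W^k_c$ to $(r,\varepsilon)$-proximality of $\Lambda^k\rho_c(\eta\gamma)$ on all of $\bbP(\Lambda^k\bbR^n)$ using the gap between the top eigenvalue of $\Lambda^k\rho_c(\eta\gamma)$ and the rest --- this gap is $\ell_{\rho_c}^{\alpha_k}(\eta\gamma)$, which is uniformly positive on $\Gamma-\{e\}$ since the limit cone $\mathcal{L}_{\rho_c}$ lies in $\mathring{\mathfrak{a}}^+$ (\cite{potrie2017eigenvalues-entropy}~Cor.~4.9, \cite{sambarino2014hyperconvex}). These reductions, and the standard estimates turning spectral gaps into quantitative proximality data, are routine and run parallel to the computations in \cite{guichard2005regularity}.
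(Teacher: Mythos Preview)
The paper does not prove this statement: it is recorded as a standard consequence of Abels--Margulis--Soifer and cited without argument, exactly as Fact~\ref{lemma-teich-proximality} was earlier. Your sketch supplies the derivation one would give, running the simultaneous form of AMS (as in \cite{benoist1997proprietes}~\S3.1 and \cite{guichard2005regularity}) across the $2(n-1)$ exterior-power representations, and your identification of the irreducibility obstacle together with its resolution via the subrepresentation generated by the limit curve is the standard route.

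One imprecision to flag: $\ell^{\alpha_k}_{\rho_c}$ is not \emph{uniformly} positive on $\Gamma-\{e\}$; it can be arbitrarily small. What the limit-cone condition from \cite{potrie2017eigenvalues-entropy} actually gives is a uniform lower bound on the ratio $\ell^{\alpha_k}_{\rho_c}/\ell^{\mathrm{H}}_{\rho_c}$. This does not break your upgrade step, however. Once $\Lambda^k\rho_c(\eta\gamma)$ is $(r,\varepsilon')$-proximal on $W^k_c$, the complementary invariant subspace $U^k_c$ is fixed (independent of $\gamma$) and uniformly transverse to the attracting lines $\mathrm{Pl}_{n,k}(\xi^k_{\rho_c}(\partial\Gamma))$ by compactness; and the top eigenvalue of $\Lambda^k\rho_c(\eta\gamma)$ on $U^k_c$ is dominated by the second eigenvalue on $W^k_c$ (the second weight $\omega_k-\alpha_k$ of $\Lambda^k\bbR^n$ lies in the irreducible piece containing the highest weight, for every possible Zariski closure of a Hitchin image). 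So the quantitative proximality on the full $\bbP(\Lambda^k\bbR^n)$ follows from that on $W^k_c$ with controlled loss, without any separate appeal to a uniform lower bound on $\ell^{\alpha_k}$.
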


\subsubsection{Loxodromy Refinement}\label{ss-loxodromy-refinement} 
Our first task in analyzing $\Phi_{ab}$ near $\Xi^1$ is to prove the following general analogue of the Proximality Refinement Claim \ref{claim-teich-better-biproximality}.
We adopt the notation that $B_n^{(2)} = \{ (p,q) \in B_n^2 \mid p \neq q \}$.

\begin{proposition}[Loxodromy Refinement]\label{prop-loxodromy-refinement}
    Let $\rho: \Gamma \to {\rm{PSL}}_n(\bbR)$ ($n\geq 2$) be Hitchin, $r > 0$ be sufficiently small, and $r > \varepsilon > 0$. Let $\Gamma^{\rho}_{(r,\varepsilon)}$ be the collection of $\gamma \in \Gamma$ so that $\rho(\gamma)$ is $(r,\varepsilon)$-loxodromic, and
    let $K \subset B_n^{(2)}$ be compact.
    
    Then for any $\varepsilon' > 0$ there is a $\tau(\varepsilon') > 0$ so that for all $\gamma \in \Gamma^\rho_{(r,\varepsilon)}$ if there exists $(p, q) \in B_n^{(2)}$ with both $\delta_0(\Phi_{\rho}(p), \Phi_\rho(q)) < \tau(\varepsilon')$ and $(\gamma p, \gamma q) \in K$ then $\rho(\gamma)$ is $(r, \varepsilon')$-biloxodromic.
\end{proposition}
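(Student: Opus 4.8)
The plan is to reduce the statement to the ``bilipschitz factor'' mechanism already used in Claim \ref{claim-teich-better-biproximality}, now applied to each exterior power $\Lambda^k\rho(\gamma)$ for $k = 1, \dots, n-1$. Fix $r > 0$ small. I would first establish the two elementary loxodromy facts that were only asserted in the Teichm\"uller case: (i) for every $\varepsilon' > 0$ there is a $T = T(\varepsilon', r) < \infty$ so that any loxodromic $g \in \mathrm{PGL}(V)$ with $\delta_0(x_g^+, X_g^<) \geq r$ (in each exterior power) and $\min_i \alpha_i(\lambda(g)) > T$ is $(r,\varepsilon')$-biloxodromic; and (ii) for every $T > 0$ there is a $B = B(T,r)$ so that a loxodromic $g$ with all such gap-separations $\geq r$ but $\min_i \alpha_i(\lambda(g)) \le T$ acts $B$-bilipschitzly on each $\mathbb{P}(\Lambda^k V)$. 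These are standard computations in the eigenbasis of $g$, uniform over the compact set of configurations of eigenflags with pairwise distance $\geq r$; the hyperconvex Frenet property guarantees the relevant distances for $\rho(\gamma)$ are bounded below once $\gamma \in \Gamma^\rho_{(r,\varepsilon)}$.

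The core of the argument is then the contrapositive of (i): if $\rho(\gamma)$ is \emph{not} $(r,\varepsilon')$-biloxodromic while $\gamma \in \Gamma^\rho_{(r,\varepsilon)}$, then some gap $\alpha_i(\lambda(\rho(\gamma)))$ is at most $T(\varepsilon', r)$, so by (ii) $\rho(\gamma)$ acts $B$-bilipschitzly on $\RP^{n-1}$ and on each relevant Grassmannian, hence on $\RP^{n-1} \supset \mathcal{B}_\rho$. I would then use compactness of $K$: since $\Phi_\rho$ is a homeomorphism (Lemma \ref{lemma-boquet-param}) and $K \subset B_n^{(2)}$ is compact, there is $\nu = \nu(K) > 0$ so that $\delta_0(\Phi_\rho(\gamma p), \Phi_\rho(\gamma q)) \geq \nu$ for all $(p,q) \in K$. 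Equivariance gives $\Phi_\rho(\gamma p) = \rho(\gamma)\Phi_\rho(p)$, so
\begin{align*}
\nu \leq \delta_0(\rho(\gamma)\Phi_\rho(p), \rho(\gamma)\Phi_\rho(q)) \leq B \cdot \delta_0(\Phi_\rho(p), \Phi_\rho(q)) < B\,\tau(\varepsilon').
\end{align*}
Choosing $\tau(\varepsilon') < \nu/B$ (with $B = B(T(\varepsilon',r), r)$, which depends only on $\varepsilon'$, $r$, and $K$) makes this a contradiction, so $\rho(\gamma)$ must be $(r,\varepsilon')$-biloxodromic. One subtlety to handle carefully: $\mathcal{B}_\rho$ is only $2$-dimensional in $\RP^{n-1}$, so the bilipschitz bound on all of $\RP^{n-1}$ applies directly to $\Phi_\rho(p), \Phi_\rho(q) \in \mathcal{B}_\rho$ without needing any intrinsic metric on $\mathcal{B}_\rho$; I would work throughout with the ambient metric $\delta_0$, which is exactly how $\tau(\varepsilon')$ is phrased in the statement.

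\textbf{Main obstacle.} I expect the main work to be establishing fact (ii) in a form uniform over all loxodromic $g$ with gap-separations $\geq r$ and bounded gaps --- i.e., getting the bilipschitz constant $B$ to depend only on the \emph{lower} bound $r$ on the geometry and the \emph{upper} bound $T$ on the smallest eigenvalue gap, and not on the individual element $g$. The point is that near-coincidence of two eigenvalues $|\lambda_i| \approx |\lambda_{i+1}|$ is precisely what fails $(r,\varepsilon')$-loxodromy, and one must check that the only way for $g$ to badly distort $\mathbb{P}(\Lambda^k V)$ is through such a near-coincidence of \emph{some} pair of consecutive eigenvalue moduli --- as long as the eigenflags stay $r$-separated, a uniformly bounded spectral gap yields a uniformly bilipschitz action. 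This is where I would give the detailed argument promised in the footnote of Claim \ref{claim-teich-better-biproximality}: diagonalize $g$, compute the derivative of the projective action in affine charts adapted to the eigenbasis, and bound the operator norm and its inverse in terms of ratios $|\lambda_i|/|\lambda_j|$, all of which lie in $[e^{-CT}, e^{CT}]$ for a dimensional constant $C$ once the gaps are bounded above by $T$ and the flags are $r$-separated. The remaining steps (compactness of $K$, equivariance, the final inequality) are then routine.
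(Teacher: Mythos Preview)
Your overall architecture matches the paper's: reduce to a length lower bound via a bilipschitz estimate in the complementary case, then compare with the $\nu$ coming from compactness of $K$. But your fact (ii) is false as stated, and this is a genuine gap.

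The problem is that bounding only the \emph{smallest} simple-root length does nothing to control the others or the Hilbert length. Concretely, in ${\rm{PSL}}_3(\bbR)$ take $g$ diagonal in the standard basis with $\alpha_1(\lambda(g)) = M$ and $\alpha_2(\lambda(g)) = 1$: then $\min_i \alpha_i = 1 \leq T$, the eigenflags are as $r$-separated as you like, yet $g$ expands by a factor $\asymp e^{M+1}$ near its repelling line, so no uniform bilipschitz constant exists as $M \to \infty$. Your final sentence (``all of which lie in $[e^{-CT}, e^{CT}]$ \dots once the gaps are bounded above by $T$'') silently passes from ``\emph{some} gap $\leq T$'' to ``\emph{all} gaps $\leq T$'', which is exactly what fails.

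The missing ingredient is that for Hitchin $\rho$ the limit cone $\mathcal{L}_\rho$ lies in the open Weyl chamber $\mathring{\mathfrak{a}}^+$ (Potrie--Sambarino). On any closed cone $L \subset \mathring{\mathfrak{a}}^+$ the simple roots are uniformly comparable to $\text{H}$, so bounding one bounds them all. The paper therefore runs the dichotomy on $\ell^{\rm H}_\rho(\gamma)$ rather than on $\min_i \alpha_i$: Lemma~\ref{lemma-better-proximality-from-eigenvalues} uses the cone hypothesis to convert $\ell^{\rm H} > T$ into ``every $\alpha_k$ large'' and hence $(r,\varepsilon')$-biloxodromy, while Claim~\ref{claim-quantitative-bilipschitz} obtains the bilipschitz bound from genuine compactness of $\{\lambda \in \mathfrak{a}^+ : \text{H}(\lambda) \leq T\}$. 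If you insert the limit-cone input into your (ii), your argument becomes equivalent to the paper's; without it, (ii) cannot be proved and the contrapositive step collapses.
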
 

We begin with preliminaries. We shall repeatedly use the following:
\begin{proposition}[Guichard \cite{guichard2005regularity} Prop. 18]\label{prop-guichard-compact}
    Let $r > 0$ be sufficiently small. There is a compact subset $K_r \subset {\rm{GL}}_n(\bbR)$ so that for any $\varepsilon \in (0,r)$ and $(r,\varepsilon)$-loxodromic $g \in {\rm{GL}}_n(\bbR)$ with norm-$1$ representatives $v_1, ..., v_n$ of the eigenslines of $g$, and with $e_1, ..., e_n$ a fixed orthonormal basis of the background inner product, there is a $T(g) \in K_r$ so that $(v_1, ..., v_n)^T = T(g) (e_1, ..., e_n)^T$.
\end{proposition}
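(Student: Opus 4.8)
The plan is to reduce the statement to one elementary estimate: that $|\det T(g)|$ is bounded below by a positive constant depending only on $r$. Indeed, $T(g)$ automatically has rows of norm $1$ (its $i$-th row is the coordinate vector of the unit eigenvector $v_i$ in the orthonormal basis $e_1,\dots,e_n$), so once $|\det T(g)| \ge c(r) > 0$ is known, $T(g)$ lies in $K_r := \{T \in M_n(\bbR) : \text{every row of } T \text{ has norm} \le 1,\ |\det T| \ge c(r)\}$, which is closed and bounded in $M_n(\bbR)$ and disjoint from $\{\det = 0\}$, hence compact in $\mathrm{GL}_n(\bbR)$. I note in advance that this estimate will use only the transversality (``distance $\ge r$'') clause of the definition of $(r,\varepsilon)$-proximality, so $K_r$ is uniform over $\varepsilon \in (0,r)$, as required.

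First I would translate the hypotheses into numerical inequalities. Since $g$ is loxodromic with $|\lambda_1| > \cdots > |\lambda_n|$, for each $j = 1,\dots,n-1$ the map $\Lambda^j g$ is proximal with attracting point $x^+_{\Lambda^j g} = [v_1\wedge\cdots\wedge v_j]$ and transverse hyperplane $X^<_{\Lambda^j g} = \bbP(\mathrm{span}\{v_I : |I| = j,\ I \ne (1,\dots,j)\})$. This hyperplane is precisely the kernel of the linear functional $\omega \mapsto \omega \wedge (v_{j+1}\wedge\cdots\wedge v_n)$ on $\Lambda^j\bbR^n$, so its $\delta_0^j$-unit normal $\psi_j$ satisfies $\|\psi_j\| = \|v_{j+1}\wedge\cdots\wedge v_n\|$ (Hodge duality being an isometry) and $|\langle v_1\wedge\cdots\wedge v_j, \psi_j\rangle| = \|v_1\wedge\cdots\wedge v_n\|$. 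Since $\delta_0^j$ is the quotient metric from the unit sphere in $\Lambda^j\bbR^n$, the distance from $[\omega]$ to a projective hyperplane with unit normal $\psi$ is $\arcsin(|\langle \omega,\psi\rangle|/\|\omega\|)$, and so the hypothesis $\delta_0^j(x^+_{\Lambda^j g}, X^<_{\Lambda^j g}) \ge r$ becomes
\begin{align*}
\frac{V}{a_j\, b_j} \ \ge\ \sin r \qquad (j = 1,\dots,n-1),
\end{align*}
where $V = \|v_1\wedge\cdots\wedge v_n\|$, $a_j = \|v_1\wedge\cdots\wedge v_j\|$ and $b_j = \|v_{j+1}\wedge\cdots\wedge v_n\|$. (If one prefers not to fix the normalization of $\delta_0^j$, it suffices that $\delta_0^j(x^+_{\Lambda^j g},X^<_{\Lambda^j g}) \ge r$ forces $V/(a_j b_j)$ to be at least some $c_1(r) > 0$.)

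Next I would extract the determinant bound. Gram--Schmidt on $v_1,\dots,v_n$ gives $V = \prod_{i=1}^{n-1} d_i$ and $a_j = \prod_{i=1}^{j-1} d_i$, where $d_i := \mathrm{dist}(v_{i+1}, \mathrm{span}(v_1,\dots,v_i)) \in (0,1]$; applying the same factorization to the tail $v_{j+1},\dots,v_n$ and using that the distance from a unit vector to a subspace only decreases as the subspace grows, $b_j \ge \prod_{i=j+1}^{n-1} d_i$. Substituting into the displayed bound, $V \ge (\sin r)\, a_j b_j \ge (\sin r)\prod_{i \ne j} d_i = (\sin r)\, V/d_j$, so $d_j \ge \sin r$ for every $j$ (here $V > 0$, since $g$ loxodromic forces $v_1,\dots,v_n$ to be independent). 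Hence $|\det T(g)| = V = \prod_{j=1}^{n-1} d_j \ge (\sin r)^{n-1}$, which is the desired bound with $c(r) = (\sin r)^{n-1}$.

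I do not expect a serious obstacle here. The one step needing care is the identification of $X^<_{\Lambda^j g}$ as the Hodge-dual hyperplane of $v_{j+1}\wedge\cdots\wedge v_n$, together with the resulting clean expression of $\delta_0^j(x^+_{\Lambda^j g}, X^<_{\Lambda^j g})$ as a ratio of Gram volumes; after that everything is a short manipulation of Gram--Schmidt distances, and in particular the $\varepsilon$-contraction and $\varepsilon$-Lipschitz clauses of $(r,\varepsilon)$-loxodromy are not needed.
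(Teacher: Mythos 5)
Your argument is correct, and in fact the paper does not prove this statement at all: it is imported verbatim as Proposition 18 of Guichard's paper \cite{guichard2005regularity}, so there is no internal proof to compare against. Your route is a clean, self-contained substitute: identifying $X^<_{\Lambda^j g}$ as the kernel of $\omega \mapsto \omega \wedge (v_{j+1}\wedge\cdots\wedge v_n)$ is right (every basis $j$-vector $v_I$ with $I \neq (1,\dots,j)$ is killed, and dimensions match), the translation of the $r$-separation into $\|v_1\wedge\cdots\wedge v_n\| \geq (\sin r)\,\|v_1\wedge\cdots\wedge v_j\|\,\|v_{j+1}\wedge\cdots\wedge v_n\|$ is correct for the spherical quotient metric, and the Gram--Schmidt bookkeeping ($a_j = \prod_{i<j} d_i$ exactly, $b_j \geq \prod_{i>j} d_i$ by monotonicity of distance under enlarging the subspace) legitimately yields $d_j \geq \sin r$ for every $j$, hence $|\det T(g)| \geq (\sin r)^{n-1}$, which together with the unit rows gives a fixed compact subset of ${\rm GL}_n(\bbR)$. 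Two further points in your favor: your observation that only the transversality clause of $(r,\varepsilon)$-proximality enters is exactly what makes $K_r$ uniform over $\varepsilon \in (0,r)$, as the statement demands, and your proof produces an explicit constant where the cited source only asserts existence. One cosmetic slip: you call $\psi_j$ the ``$\delta_0^j$-unit normal'' and then say $\|\psi_j\| = \|v_{j+1}\wedge\cdots\wedge v_n\|$; what you mean is that the Hodge-dual normal has norm $b_j$, so the unit normal is that vector divided by $b_j$ --- your displayed inequality $V/(a_j b_j) \geq \sin r$ already accounts for this, so nothing downstream is affected, but the wording should be fixed.
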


The below is a corollary of the preceding (\cite{guichard2005regularity} Cor. 19) and the observation that inducing metrics on exterior products is natural with respect to change of basis.

\begin{corollary}[Guichard \cite{guichard2005regularity}]\label{cor-comparability}
    For any $r> 0$ there is a $C_r > 0$ so for all $\varepsilon \in (0,r)$, $k \in \{1,...,n-1\}$, and any $(r,\varepsilon)$-proximal $g$, the metrics $\delta_g^k$ and $\delta_0^k$ are $C_{r}$-multiplicatively comparable.
    That is, $C_{r}^{-1}\delta_g^k(x,y) \leq \delta_0^k(x,y) \leq C_{r} \delta_{g}^k(x,y)$ for all $x,y \in \bbP(\Lambda^k\bbR^n)$.
\end{corollary}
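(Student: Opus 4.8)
The plan is to extract the statement from Proposition \ref{prop-guichard-compact} by passing everything through a single uniformly bounded change of basis, and then invoking the elementary fact that a linear automorphism lying in a fixed compact subset of $\mathrm{GL}_n(\bbR)$ induces a projective map with uniformly bounded bilipschitz distortion. First I would apply Proposition \ref{prop-guichard-compact} to $g$ (for $\delta_g$ to be defined $g$ must be loxodromic, which is the relevant case), obtaining $T(g)$ in the $r$-dependent compact set $K_r \subset \mathrm{GL}_n(\bbR)$ with $T(g)e_i = v_i$, where $(e_i)$ is the fixed background orthonormal basis and $(v_i)$ are the chosen norm-$1$ eigenvectors of $g$. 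By definition, $\delta_g$ is the metric on $\bbP(V)$ induced by the inner product $\langle u,w\rangle_g = \langle T(g)^{-1}u, T(g)^{-1}w\rangle_0$, so the map $[v]\mapsto[T(g)v]$ is an isometry $(\bbP(V),\delta_0)\to(\bbP(V),\delta_g)$; that is, $\delta_g = (T(g))_*\delta_0$.

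Next I would transport this to exterior powers, which is where the naturality remark enters. The inner product on $\Lambda^k V$ induced by $\langle\cdot,\cdot\rangle_g$ has the family $\{v_{i_1}\wedge\dots\wedge v_{i_k}\}_{i_1<\dots<i_k}$ as an orthonormal basis, and this family is exactly $\Lambda^k T(g)$ applied to the standard orthonormal basis $\{e_{i_1}\wedge\dots\wedge e_{i_k}\}$ of $\Lambda^k V$ for the inner product induced by $\langle\cdot,\cdot\rangle_0$. Hence the two induced inner products on $\Lambda^k V$ correspond under $\Lambda^k T(g)$, and therefore $[w]\mapsto[\Lambda^k T(g)\,w]$ is an isometry $(\bbP(\Lambda^k V),\delta_0^k)\to(\bbP(\Lambda^k V),\delta_g^k)$. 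It thus suffices to bound, uniformly in $k$, $\varepsilon$, and $g$, the bilipschitz distortion on $(\bbP(\Lambda^k V),\delta_0^k)$ of the projective map induced by $\Lambda^k T(g)$.

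Finally, since $K_r$ is compact in $\mathrm{GL}_n(\bbR)$ there is $M_r\geq 1$ with $\|T\|_{\mathrm{op}},\|T^{-1}\|_{\mathrm{op}}\leq M_r$ for all $T\in K_r$, whence $\|\Lambda^k T(g)\|_{\mathrm{op}}$ and $\|(\Lambda^k T(g))^{-1}\|_{\mathrm{op}}$ are at most $M_r^{n-1}$. A standard estimate — reduce to the diagonal case using the singular value decomposition, whose orthogonal factors act isometrically on projective space, and then compare the quotient (equivalently sine) metrics directly — shows a linear automorphism $L$ of a Euclidean space induces a projective map whose bilipschitz constant is controlled by $\|L\|_{\mathrm{op}}\|L^{-1}\|_{\mathrm{op}}$. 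Applying this with $L=\Lambda^k T(g)$ produces a constant $C_r$ depending only on $M_r$ and $n$, hence independent of $k$, $\varepsilon$, and $g$, for which $[w]\mapsto[\Lambda^k T(g)\,w]$ is $C_r$-bilipschitz on $(\bbP(\Lambda^k V),\delta_0^k)$; together with $\delta_g^k = (\Lambda^k T(g))_*\delta_0^k$ this gives $C_r^{-1}\delta_g^k(x,y)\leq \delta_0^k(x,y)\leq C_r\,\delta_g^k(x,y)$ for all $x,y$, as claimed. The only mildly technical ingredient is this last quantitative comparison between the condition number of a linear map and the distortion of its induced projective map, which is elementary and well known; there is no serious obstacle, the entire content being a uniformity statement read off from Proposition \ref{prop-guichard-compact}.
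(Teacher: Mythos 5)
Your proposal is correct and follows essentially the same route the paper indicates: it derives the statement from Proposition \ref{prop-guichard-compact} (the compact family $K_r$ of change-of-basis matrices) together with the naturality of induced inner products on $\Lambda^k V$ under $\Lambda^k T(g)$, plus the elementary fact that a projective map induced by a linear map of bounded condition number has uniformly bounded bilipschitz distortion. The only cosmetic points are that the sharp elementary bound is by the square of $\|L\|_{\mathrm{op}}\|L^{-1}\|_{\mathrm{op}}$ (immaterial for uniformity) and your explicit remark that $g$ must be loxodromic for $\delta_g^k$ to be defined, which matches the intended reading of the statement.
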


Prop. \ref{prop-loxodromy-refinement} uses the following link between eigenvalue gaps and quantitative proximality.

\begin{lemma}\label{lemma-better-proximality-from-eigenvalues}
    Fix a closed cone $L$ contained in $\mathring{\mathfrak{a}}^+$ and $r > \varepsilon > 0$ sufficiently small.
    For any $\varepsilon' > 0$ there is a $T > 0$ so that if $g \in {\rm{PSL}}_n(\bbR)$ is $(r,\varepsilon)$-loxodromic and satisfies:
    \begin{enumerate}
        \item {\rm{(Jordan Projection Control)}} $\lambda(g) \in L$,
        \item {\rm{(Hilbert Length Control)}} ${\rm{H}}(g) > T$,
    \end{enumerate} then $g$ is $(r,\varepsilon')$-biloxodromic.
\end{lemma}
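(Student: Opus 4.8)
The statement asserts that a uniform eigenvalue-gap lower bound (Hilbert length large, Jordan projection bounded away from the walls) upgrades $(r,\varepsilon)$-loxodromy of a single element $g$ to $(r,\varepsilon')$-biloxodromy. The plan is to work exterior power by exterior power, reducing to a statement about a single proximal action, and then to exploit the explicit form of a proximal diagonal-ish matrix in a normalized basis. Fix $k \in \{1,\dots,n-1\}$; the goal is to show $\Lambda^k g$ is $(r,\varepsilon')$-proximal on $\bbP(\Lambda^k \bbR^n)$, with the same argument applied to $g^{-1}$ (equivalently to $\Lambda^{n-k}g$) yielding the ``bi'' conclusion. First I would use Proposition \ref{prop-guichard-compact}: since $g$ is $(r,\varepsilon)$-loxodromic with $r$ small, there is $T(g) \in K_r$ carrying the fixed orthonormal basis to a unit-norm eigenbasis $v_1,\dots,v_n$ of $g$, and $K_r$ is compact independent of $g$. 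In the metric $\delta_g^k$ adapted to this eigenbasis, $\Lambda^k g$ acts diagonally on the Plücker basis $\{w_I\}$ with eigenvalues $\prod_{i\in I}\lambda_i$; its attracting line is $w_{(1,\dots,k)}$ and its repelling hyperplane is the span of the other $w_I$. The separation of attractor from repelling hyperplane in $\delta_g^k$ is bounded below by an absolute constant (it is a coordinate line versus a coordinate hyperplane), hence by Corollary \ref{cor-comparability} is $\geq r'$ for an $r$-dependent but $g$-independent $r' > 0$; so (shrinking $r$ at the outset if needed) the first $(r,\varepsilon')$-proximality condition, $\delta_0^k(x^+, X^<) \geq r$, holds automatically.

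The substantive point is the remaining two conditions: that $\Lambda^k g$ maps the complement of an $\varepsilon'$-neighborhood of $X^<_{\Lambda^k g}$ into an $\varepsilon'$-neighborhood of $x^+_{\Lambda^k g}$, and is $\varepsilon'$-Lipschitz there. Here I would compute in $\delta_g^k$ and then transfer to $\delta_0^k$ via Corollary \ref{cor-comparability} at a uniform cost $C_r$. In the adapted affine chart centered at $w_{(1,\dots,k)}$ with the other Plücker directions as coordinates, the map $\Lambda^k g$ is the linear contraction $(t_I)_{I\neq I_0} \mapsto \big(\tfrac{\prod_{i\in I}\lambda_i}{\prod_{i\in I_0}\lambda_i}\, t_I\big)_{I\neq I_0}$, whose operator norm is $\max_{I\neq I_0}\exp\!\big(\sum_{i\in I}\ell_i - \sum_{i\in I_0}\ell_i\big) \leq \exp(-\alpha_k(\lambda(g)))$ — the largest such eigenvalue ratio corresponds to swapping the index $k$ for $k+1$, giving $\lambda_{k+1}/\lambda_k$. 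By the Jordan Projection Control hypothesis, $\lambda(g) \in L \subset \mathring{\mathfrak a}^+$, and the Hilbert Length Control hypothesis $\text{H}(g) > T$ forces, via the geometry of the cone $L$ (all simple roots are bounded below by a fixed multiple of $\text{H}$ on $L$), that $\alpha_k(\lambda(g)) \geq c_L \cdot T$ for a constant $c_L > 0$ depending only on $L$. Thus the contraction factor is at most $e^{-c_L T}$, which can be made as small as we like by choosing $T$ large; this simultaneously gives the Lipschitz bound and, combined with the bounded size of the affine chart (from the uniform separation of attractor and repelling hyperplane), the image-into-small-neighborhood statement. Transferring back through the uniform comparison constants $C_r$ and absorbing the $g$-independent geometry of the chart, we conclude $\Lambda^k g$ is $(r,\varepsilon')$-proximal in $\delta_0^k$ once $T$ is large enough depending only on $L$, $r$, and $\varepsilon'$. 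Taking the maximum of the finitely many thresholds over $k=1,\dots,n-1$ (and over $g, g^{-1}$) gives a single $T$, proving biloxodromy.

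\textbf{Main obstacle.} The one place requiring genuine care is making the geometric estimates \emph{uniform in $g$}: the affine charts, the separation constant $r'$, and the Lipschitz estimate all a priori depend on the eigenbasis of $g$, and the role of Proposition \ref{prop-guichard-compact} and Corollary \ref{cor-comparability} is precisely to pin these down independently of $g$ within the $(r,\varepsilon)$-loxodromic elements. I would need to be careful that the passage ``small operator norm in the affine chart'' $\Rightarrow$ ``$(r,\varepsilon')$-proximality in the sense of the three-part definition'' is done with the metric $\delta_g^k$ where the linear algebra is clean, and only then pushed to $\delta_0^k$; doing it directly in $\delta_0^k$ would entangle the two issues. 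The inequality $\alpha_k(\lambda(g)) \geq c_L \text{H}(g)$ on $L$ is elementary (both sides are linear functionals nonnegative on $\mathfrak a^+$, and $\text{H}$ is a sum of the $\alpha_i$, so on a closed subcone of $\mathring{\mathfrak a}^+$ each $\alpha_i$ is comparable to $\text{H}$), but it is the conceptual heart of why the Hilbert Length Control hypothesis is the right one. No higher-rank dynamics is needed beyond the cited facts; the proof is a normalization argument plus a root-system comparison.
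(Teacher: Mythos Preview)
Your proposal is correct and follows essentially the same approach as the paper: pass to the adapted metric $\delta_g^k$ via Corollary \ref{cor-comparability}, compute the action of $\Lambda^k g$ in an affine chart centered at the attractor where it is diagonal with contraction rate $\exp(-\alpha_k(\lambda(g)))$, and invoke the cone inequality $\alpha_k \geq c_L \mathrm{H}$ on $L$. One small cleanup: your discussion of the separation condition $\delta_0^k(x^+,X^<)\geq r$ is unnecessary and slightly confused---this condition is already part of the hypothesis that $g$ is $(r,\varepsilon)$-loxodromic, so there is nothing to verify and no need (or freedom) to ``shrink $r$''; the paper simply does not mention it. For the ``bi'' part, the paper enlarges $L$ to $L\cup\iota(L)$ via the opposition involution and reduces to loxodromy of a single element, whereas you apply the same computation to $g^{-1}$; these amount to the same thing since the eigenbasis of $g^{-1}$ coincides with that of $g$ (so the compactness from Proposition \ref{prop-guichard-compact} still applies) and $\alpha_k(\lambda(g^{-1}))=\alpha_{n-k}(\lambda(g))\geq c_L\,\mathrm{H}(g)$.
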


\begin{proof}
    We may replace $L$ with $L \cup \iota(L)$, with $\iota$ the opposition involution.
    As $\lambda(g^{-1}) = \iota (\lambda(g))$ and $H(\lambda(g^{-1})) = H(\lambda(g))$, it suffices to prove $(r,\varepsilon')$-loxodromy instead of biloxodromy. 
    
    Let us begin by noting that since $L$ is a closed cone in $\mathring{\mathfrak{a}}^+$ there is a constant $c_L > 0$ so $\alpha_k(x) > c_L \text{H}(x)$ for all $x \in L$ and $k \in \{ 1, ..., n-1\}$. So it suffices to take $\ell^{\alpha_k}(g) > T$ for all $k \in \{1 ,..., n-1\}$ in our analysis. As the desired property is invariant under replacing the metrics $\delta_0^k$ on $\bbP(\Lambda^k \bbR^n)$ with uniformly multiplicatively comparable metrics, we may replace $\delta_0^k$ on $\bbP(\Lambda^k\bbR^n)$ by $\delta_g^k$ for all $k \in \{1, ...,n-1\}$.
    
    We now compute. Let $k \in \{1, ..., n-1\}$ be fixed.
    Let $[v_1^k], ..., [v_N^k]$ ($N = \binom{n}{k}$) be the eigenlines of $\Lambda^k g$, ordered with nondecreasing modulus of eigenvalues, and with corresponding eigenvalues $\lambda_1^k, ..., \lambda_{N}^k$.
    The largest-modulus eigenvalue eigenspace $[v_1^k]$ of $\Lambda^kg$ is $[v_1 \wedge ... \wedge v_k]$ with corresponding eigenvalue $\lambda^k_1 = \lambda_1 ... \lambda_k$.
    Note $|\lambda^k_1/\lambda^k_j| \geq |\lambda_k/\lambda_{k+1}| > 1$ for all $j > 1$.
    
    Let $\mathcal{A}$ be an affine chart with $X_{\Lambda^kg}^<$ the plane at infinity, $x_{\Lambda^k g}^+$ the origin, and the $x_j$-axis ($j=1,..., N -1$) given by the intersection of the projective line $x_{g}^+ \oplus \bbR {v_{j+1}^k}$ for $1 \leq j \leq N-1$, and with a point that is uniformly separated from all hyperplanes given by the span of $N-1$ eigenlines of $\Lambda^kg$ placed at $(1, ..., 1)$.
    
    Fix $\varepsilon' > 0$.
    Then there are $C_{\varepsilon'}, R_{\varepsilon'}, r_{\varepsilon'} > 0$ so the Euclidean metric $d_{\mathcal{A}}$ on $\mathcal{A}$ satisfies:
    \begin{enumerate}
        \item The restriction of $d_{\mathcal{A}}$ to $\bbP(\Lambda^k\bbR^n) - N_{\varepsilon'}(X_{\Lambda^kg}^<)$ has $C_{\varepsilon'}^{-1} \delta_g^k \leq d_{\mathcal{A}} \leq C_{\varepsilon'} \delta_g^k$,
        \item $\bbP(\Lambda^k \bbR^n) - N_{\varepsilon'}(X_{\Lambda^kg}^<)$ is contained in $N_{R_\varepsilon'}(0)$ in $\mathcal{A}$ with respect to $d_\mathcal{A}$,
        \item $N_{\varepsilon'}(x_{\Lambda^kg}^+)$ contains $N_{r_{\varepsilon'}}(0)$ in $\mathcal{A}$ with respect to $d_\mathcal{A}$.
    \end{enumerate} 
    Now, $\Lambda^k g$ acts on $\mathcal{A}$ by $$g[x_1:x_2:...:x_{n-1}:1] = \left[(\lambda_2^k/\lambda_1^k) x_1 : (\lambda_3^k/\lambda_1^k)x_2 : ... : \left(\lambda_{N}^k/\lamdba_1^k\right)x_{{N} -1}:1\right].$$
    This map is Lipschitz with Lipschitz constant $\lambda_2^k / \lambda_1^k = \exp(-\alpha_k(g))$, and maps $N_R(0)$ to $N_{\lambda_2^k R/ \lambda_1^k}(0)$ for all $R > 0$. The claim follows.
\end{proof}

We now prove Prop. \ref{prop-loxodromy-refinement}.
The point is that in the model metric $\delta_g$ the differential of $g$ can be controlled in terms of the Hilbert length.

\begin{proof}[Proof of Prop. \ref{prop-loxodromy-refinement}]
    Let $n, K, \rho, r, \varepsilon,$ and $\varepsilon'$ be given as stated.
    Since limit cones of Hitchin representations are contained $\mathring{\mathfrak{a}}^+$ (\cite{potrie2017eigenvalues-entropy} Prop. 4.4), we may apply Lemma \ref{lemma-better-proximality-from-eigenvalues} with $\mathcal{L}_{\rho(\gamma)}$.
    In particular, Lemma \ref{lemma-better-proximality-from-eigenvalues} shows there is a $T < \infty$ so that it suffices to produce a $\tau$ so that if $\delta_0(\Phi_\rho ( p), \Phi_\rho(q)) < \tau$ and $(\gamma p , \gamma q) \in K$ with $\gamma \in \Gamma_{(r,\varepsilon)}^\rho$ then $\ell^{\text{H}}_\rho(\gamma) > T$.
    By Corollary \ref{cor-comparability}, the identity maps between the metrics $\delta_{\rho(\gamma)}$ and $\delta_0$ for all $\gamma$ under consideration are uniformly bilipschitz. So we are free to work with the metrics $\delta_{\rho(\gamma)}$ on $\RP^{n-1}$ throughout the following.
    
    \begin{claim}\label{claim-quantitative-bilipschitz}
        Let $r > 0$ be sufficiently small. For every $T > 0$ there is a $B = B(T,r, \rho) < \infty$ so if $\gamma \in \Gamma^{\rho}_{r,\varepsilon}$ for any $\varepsilon < r$ and $\ell^{\rm{H}}_{\rho}(\gamma) \leq T$ then the action of $\rho(\gamma)$ on $(\RP^{n-1}, \delta_{\rho(\gamma)})$ is $B$-bilipschitz.
    \end{claim}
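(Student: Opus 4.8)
The plan is to pass to the adapted metric $\delta_{\rho(\gamma)}$, in which $\rho(\gamma)$ acts as a \emph{diagonal} projective transformation, and to observe that the bilipschitz constant of such a transformation is controlled purely by the ratio of the largest to the smallest modulus among its eigenvalues --- a ratio which is exactly $e^{\ell^{\text{H}}_\rho(\gamma)}$. So I would simply take $B = e^T$; the hypothesis that $\rho(\gamma)$ is $(r,\varepsilon)$-loxodromic enters only to guarantee that $\rho(\gamma)$ is real-diagonalizable, so that $\delta_{\rho(\gamma)}$ is defined (for $\gamma = e$ the statement is trivial).

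First I would record the model. For $\gamma \neq e$, loxodromy gives $n$ eigenlines $[v_1], \dots, [v_n]$ of $\rho(\gamma)$ with norm-$1$ representatives $v_i$, and by definition $\delta_{\rho(\gamma)}$ is the metric obtained by realizing $\bbP(\bbR^n)$ as the quotient by negation of the unit sphere of the inner product $\langle\cdot,\cdot\rangle_\gamma$ making $(v_1,\dots,v_n)$ orthonormal; in this basis a lift of $\rho(\gamma)$ to $\text{SL}_n(\bbR)$ is $D = \op{diag}(\mu_1,\dots,\mu_n)$ with $\log|\mu_i| = \ell_i(\rho(\gamma))$. Lifting the action of $\rho(\gamma)$ on $(\RP^{n-1},\delta_{\rho(\gamma)})$ to the map $x \mapsto Dx/\|Dx\|_\gamma$ of the $\langle\cdot,\cdot\rangle_\gamma$-unit sphere, its differential at a unit vector $x$ applied to a tangent vector $v \perp_\gamma x$ is $\|Dx\|_\gamma^{-1}\,\pi_{(Dx)^{\perp}}(Dv)$, where $\pi_{(Dx)^{\perp}}$ is $\langle\cdot,\cdot\rangle_\gamma$-orthogonal projection off $Dx$; since orthogonal projection does not increase norms, this has $\langle\cdot,\cdot\rangle_\gamma$-norm at most $\|Dv\|_\gamma/\|Dx\|_\gamma \le (\max_i|\mu_i|/\min_i|\mu_i|)\,\|v\|_\gamma$. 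As the round projective space is a length space (indeed a Riemannian manifold), this shows $\rho(\gamma)$ is $(\max_i|\mu_i|/\min_i|\mu_i|)$-Lipschitz on $(\RP^{n-1},\delta_{\rho(\gamma)})$, and the identical estimate applied to $D^{-1}$ --- which has the same eigenvalue-modulus ratio --- handles $\rho(\gamma)^{-1}$. Finally, since $\det D = \pm 1$ one has $\sum_i\log|\mu_i| = 0$, hence $\ell_1(\rho(\gamma)) \ge 0 \ge \ell_n(\rho(\gamma))$ and $\max_i|\mu_i|/\min_i|\mu_i| = e^{\ell_1(\rho(\gamma)) - \ell_n(\rho(\gamma))} = e^{\text{H}(\lambda(\rho(\gamma)))} = e^{\ell^{\text{H}}_\rho(\gamma)} \le e^T$, so $B = e^T$ works.

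I do not expect a genuine obstacle here; the one point worth underlining is that this estimate is \emph{uniform} in $\gamma$, and that uniformity is precisely what the passage to $\delta_{\rho(\gamma)}$ buys. In the fixed reference metric $\delta_0$ there is no uniform bilipschitz bound of this kind --- a short loxodromic whose eigenbasis is far from $\delta_0$-orthogonal is arbitrarily distorted --- which is why the surrounding proof of Proposition~\ref{prop-loxodromy-refinement} operates in the adapted metrics and invokes Corollary~\ref{cor-comparability} (via $(r,\varepsilon)$-loxodromy) to transfer back to $\delta_0$. Inside this claim the relevant metric is already $\delta_{\rho(\gamma)}$, so the diagonal computation above is all that is needed.
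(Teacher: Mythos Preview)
Your proof is correct and takes a more direct route than the paper's. The paper argues by soft compactness: in the adapted metric one may treat $\rho(\gamma)$ as diagonal with respect to a fixed orthonormal basis, and the family $\mathscr{C}_{\rho,T}$ of diagonal elements of $\text{PGL}_n(\bbR)$ with Jordan projection in $\mathcal{L}_\rho$ and Hilbert length at most $T$ is compact, whence uniform upper and lower bounds on $\|D_p h(v)\|/\|v\|$ follow by continuity. You instead compute the differential of a diagonal projective transformation explicitly and bound it by the eigenvalue-modulus ratio, obtaining the explicit constant $B = e^T$, independent of $r$ and $\rho$. What your approach buys is a sharper statement and the observation that neither the limit cone nor the $(r,\varepsilon)$-loxodromy constants play any quantitative role here --- only real diagonalizability is used, so that $\delta_{\rho(\gamma)}$ is defined. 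What the paper's approach buys is brevity: it avoids the sphere-differential computation at the cost of a less explicit constant.
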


    \begin{proof}
    Since we work with $\delta_{\rho(\gamma)}$, we may treat $\rho(\gamma)$ as diagonal with respect to a fixed basis of $\bbR^n$.
    The collection $\mathscr{C}_{\rho,T}$ of elements $h$ of $\text{PGL}_n(\bbR)$ that are diagonal with respect to this basis, have $\lambda(h) \in \mathcal{L}_\rho$, and have $\ell^{\rm{H}}(h) \leq T$ is compact.
    Since every $h$ in the compact family $\mathscr{C}_{\rho,T}$ acts by diffeomorphisms on $\RP^{n-1}$, there are uniform lower and upper bounds on $D_ph(v)/||v||$ for $p \in \RP^{n-1}$, nonzero $v \in \text{T}_p\RP^{n-1}$ and $h \in \mathscr{C}_{\rho,T}$.
    \end{proof}

    Now let $T > 0$.
    Since $K$ is compact, there is a $\nu > 0$ so the lower bound $\delta(\Phi_\rho(p), \Phi_\rho(q)) > \nu$ holds for all $(p,q) \in K$.
    We claim with $B(\rho,T)$ as supplied by Claim \ref{claim-quantitative-bilipschitz}, $\tau = \nu/B(\rho,T)$ satisfies the condition we reduced to above.
    So suppose there is a pair $(p,q) \in B_n^{(2)}$ so that $\delta_g(\Phi_\rho(p), \Phi_\rho(q)) < \nu/B(\rho,T)$ and $(\gamma p , \gamma q) \in K$. Then $\delta_g(\rho(\gamma) \Phi_\rho(p), \rho(\gamma) \Phi_\rho(q)) > \nu$, so the action of $\rho(\gamma)$ on $\RP^{n-1}$ is not $B(\rho,T)$-Lipschitz and Claim \ref{claim-quantitative-bilipschitz} shows $\ell^{\text{H}}_\rho(\gamma) > T$.
\end{proof}

\subsubsection{Estimates Near the Frenet Curve}

We now prove our lower bound for the H\"older exponent of $\Phi_{ab}$.
Let \begin{align}
    \alpha_{ab}^U = \min\left(\alpha_{ab}^L, \inf_{\gamma \in \Gamma - \{e\}} \frac{\ell_{\rho_a}^{\alpha_1}(\gamma) }{ \ell_{\rho_b}^{\rm{H}}(\gamma)} \right)\label{eq-holder-lb}
\end{align}
Note that $\alpha_{ab}^U > 0$ as the limit cones $\mathcal{L}_{\rho_a}$ and $\mathcal{L}_{\rho_b}$ are contained in $\mathring{\mathfrak{a}}^+$ and $d_{\text{Th}}^{\alpha_1}(\rho_b, \rho_a)$ is finite.

\begin{proposition}
    Let $\rho_a, \rho_b: \Gamma \to {\rm{PSL}}_n(\bbR)$ ($n > 3$) be Hitchin.
    Then $\Phi_{ab}$ is $\alpha_{ab}^U$-H\"older.
\end{proposition}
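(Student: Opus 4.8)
The plan is to verify the H\"older bound $\delta_0(\Phi_{ab}(P),\Phi_{ab}(Q)) \le C\,\delta_0(P,Q)^{\alpha_{ab}^U}$ for all $P,Q \in \mathcal{B}_{\rho_b}$. By compactness of $\mathcal{B}_{\rho_b}$ this need only be checked when $\delta_0(P,Q)$ is below a fixed scale $\sigma_0$ of our choosing, and by the homeomorphism $\Phi_{\rho_b}$ of Lemma \ref{lemma-boquet-param} the preimages $p = \Phi_{\rho_b}^{-1}(P)$, $q = \Phi_{\rho_b}^{-1}(Q)$ are then within some $\kappa_0 = \kappa_0(\sigma_0) \to 0$ in $B_n$. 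I split according to the $B_n$-distance of $p$ to the union $\Delta$ of diagonals. If $p$ lies at distance $\ge 2\kappa_0$ from $\Delta$, then $p,q$ lie in a fixed compact subset of $B_n - \Delta$, so $P,Q$ lie in a fixed compact $K \subset \Xi^2_{\rho_b}$, and Lemma \ref{lemma-holder-regularity-off-frenet} gives that $\Phi_{ab}|_K$ is $\alpha_{ab}^L$-H\"older, hence $\alpha_{ab}^U$-H\"older. The remaining, main case is that $p$ (hence $q$) lies within $3\kappa_0$ of $\Delta$; then, writing out coordinates, all the (at least two) points of $\partial\Gamma$ occurring as entries of $p$ or $q$ lie in a single arc $A \subset \partial\Gamma$ of diameter controlled by $\kappa_0$.

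For this case I adapt the argument of \S\ref{s-teich-space-case}. Fix $r>0$ small, with associated finite set $F \subset \Gamma$ from Fact \ref{fact-loxodromy} and Proposition \ref{prop-loxodromy-refinement}. Choosing $u,v$ to be two entries of $p,q$ realizing the diameter of the entry set, $z \in \partial\Gamma$ at distance $\ge \operatorname{diam}(\partial\Gamma)/3$ from $A$, and ordering $u,v$ so that $(u,v,z)$ is positively oriented, I use the uniform convergence group property together with Fact \ref{fact-loxodromy} to produce $\gamma = \eta\gamma_0$ ($\eta \in F$) so that $\rho_a(\gamma),\rho_b(\gamma)$ are $(r,\varepsilon)$-loxodromic and $\gamma(u,v,z)$ lies in a fixed compact $L \subset \partial\Gamma^{(3)+}$ on which all pairwise distances and consecutive-arc lengths exceed some $\nu>0$. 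The key uses of $\gamma$ are then: (i) applying Proposition \ref{prop-loxodromy-refinement} to the \emph{auxiliary} pair $((u,u),(v,v)) \in B_n^{(2)}$ --- whose coordinate points are $\kappa_0$-close, so $\delta_0(\xi^1_{\rho_c}(u),\xi^1_{\rho_c}(v)) < \tau_c(\varepsilon')$ once $\sigma_0$ is small, and whose $\gamma$-image $((\gamma u,\gamma u),(\gamma v,\gamma v))$ lies in the fixed compact subset of $B_n^{(2)}$ determined by $\nu$ --- to upgrade $\gamma$ to be $(r,\varepsilon')$-biloxodromic for both $\rho_a$ and $\rho_b$ with $\varepsilon'$ as small as we wish; and (ii) observing that since the $\kappa_0$-thin pair $\{u,v\}$ is carried to a $\nu$-separated pair, $(r,\varepsilon)$-loxodromy of $\rho_c(\gamma)$ forces $A$ into a small neighborhood of the repelling fixed point $\gamma^-$, so $z$ is separated from $\gamma^-$, $\gamma z$ lies near $\gamma^+$, and all $\gamma$-images of entries of $p,q$ lie in the compact arc $J = \gamma(A)$, which is bounded away from $\gamma^+$. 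Since $J$ avoids $\gamma^+$, the general-position clauses of hyperconvexity (\cite{guichard2005dualite}, cf. Lemma \ref{lemma-restriction}) give a uniform $\mu_0 > 0$ with $\Phi_{\rho_c}(\gamma p),\Phi_{\rho_c}(\gamma q) \notin N_{\mu_0}(\xi^{n-1}_{\rho_c}(\gamma^+))$; noting $\xi^{n-1}_{\rho_c}(\gamma^+) = X^<_{\rho_c(\gamma)^{-1}}$, I fix $\varepsilon' < \mu_0$.

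The estimate is then completed by transporting back along $\gamma^{-1}$ and working in the model metrics. By Corollary \ref{cor-comparability} one may replace $\delta_0$ by $\delta_{\rho_c(\gamma)}$, in which $\rho_c(\gamma)^{-1}$ acts linearly in the affine chart with $X^<_{\rho_c(\gamma)^{-1}}$ at infinity and attracting line at the origin; since $\Phi_{\rho_c}(\gamma p),\Phi_{\rho_c}(\gamma q)$ lie outside $N_{\varepsilon'}(X^<)$ while $\Phi_{\rho_c}(p) = \rho_c(\gamma)^{-1}\Phi_{\rho_c}(\gamma p)$ and $\Phi_{\rho_c}(q) = \rho_c(\gamma)^{-1}\Phi_{\rho_c}(\gamma q)$ land near the attracting line $\xi^1_{\rho_c}(\gamma^-)$, the eigenvalue structure of $\rho_c(\gamma)^{-1}$ yields, with a uniform $C$,
\begin{gather*}
\delta_0(\Phi_{\rho_a}(p),\Phi_{\rho_a}(q)) \le C\,e^{-\ell^{\alpha_1}_{\rho_a}(\gamma^{-1})}\,\delta_0(\Phi_{\rho_a}(\gamma p),\Phi_{\rho_a}(\gamma q)), \\
\delta_0(\Phi_{\rho_b}(p),\Phi_{\rho_b}(q)) \ge C^{-1}e^{-\ell^{\mathrm{H}}_{\rho_b}(\gamma^{-1})}\,\delta_0(\Phi_{\rho_b}(\gamma p),\Phi_{\rho_b}(\gamma q)),
\end{gather*}
where $e^{-\ell^{\alpha_1}_{\rho_a}(\gamma^{-1})}$ is the largest and $e^{-\ell^{\mathrm{H}}_{\rho_b}(\gamma^{-1})}$ the smallest eigenvalue ratio of the respective inverses. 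If $\delta_0(\Phi_{\rho_b}(\gamma p),\Phi_{\rho_b}(\gamma q))$ exceeds a small fixed threshold $\theta_0$, then the remaining factors are bounded and, since $\alpha_{ab}^U \le \ell^{\alpha_1}_{\rho_a}(\gamma^{-1})/\ell^{\mathrm{H}}_{\rho_b}(\gamma^{-1})$ by \eqref{eq-holder-lb} and the inversion symmetries $\ell^{\mathrm{H}}_\rho(\gamma^{-1}) = \ell^{\mathrm{H}}_\rho(\gamma)$ and $\ell^{\alpha_1}_\rho(\gamma^{-1}) = \ell^{\alpha_{n-1}}_\rho(\gamma)$, the ratio $\delta_0(\Phi_{ab}(P),\Phi_{ab}(Q))/\delta_0(P,Q)^{\alpha_{ab}^U}$ is bounded. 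If instead $\delta_0(\Phi_{\rho_b}(\gamma p),\Phi_{\rho_b}(\gamma q)) < \theta_0$, a short check using that $\gamma u,\gamma v$ are $\nu$-separated forces $\gamma p,\gamma q$ to lie in a fixed compact $K \subset \Xi^2_{\rho_b}$, so Lemma \ref{lemma-holder-regularity-off-frenet} gives $\delta_0(\Phi_{\rho_a}(\gamma p),\Phi_{\rho_a}(\gamma q)) \le C\,\delta_0(\Phi_{\rho_b}(\gamma p),\Phi_{\rho_b}(\gamma q))^{\alpha_{ab}^L}$; substituting into the displayed bounds and using $\alpha_{ab}^L \ge \alpha_{ab}^U$ together with $\delta_0(\Phi_{\rho_b}(\gamma p),\Phi_{\rho_b}(\gamma q)) < \theta_0 < 1$, the ratio is again bounded. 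Choosing $\sigma_0$ small enough for all the smallness requirements above to hold completes the proof.

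The hard part is the second and third steps, that is, the ``adapted'' use of the convergence group property referred to in \S\ref{ss-outline}. Because $\Gamma$ acts cocompactly on $\partial\Gamma^{(3)+}$ but not on $B_n^{(2)}$ --- the coordinate data of a pair of points in $\mathcal{B}_\rho$ carries a genuine modulus that no single group element can normalize --- one cannot simply translate $(p,q)$ into a compact set; the resolution is to normalize instead a thin auxiliary diagonal pair built from the extreme coordinates, together with a separated third point. Extracting from hyperconvexity that this already confines $\Phi_{\rho_c}(\gamma p),\Phi_{\rho_c}(\gamma q)$ uniformly off the repelling hyperplanes, and isolating the exact dichotomy on $(\gamma p,\gamma q)$ that lets the transported estimate close with the available --- rather than sharp --- exponent, is where the delicacy lies.
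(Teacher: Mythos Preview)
Your proposal is correct and follows essentially the same route as the paper: reduce to pairs near $\Xi^1$ via Lemma \ref{lemma-holder-regularity-off-frenet}, normalize a triple in $\partial\Gamma^{(3)+}$ via the uniform convergence group property and Fact \ref{fact-loxodromy}, upgrade to biloxodromy via Proposition \ref{prop-loxodromy-refinement}, locate the fixed points relative to the cluster, and close with the eigenvalue Lipschitz/co-Lipschitz bounds for $\rho_c(\gamma)^{-1}$ in an adapted affine chart, invoking the dichotomy of Claim \ref{claim-cluster-characterization} on the transported pair. Your two execution shortcuts---applying Proposition \ref{prop-loxodromy-refinement} directly to the diagonal auxiliary pair $((u,u),(v,v))$ rather than the paper's case split in Claim \ref{claim-needed-proximality-improvement}, and working in the model metric $\delta_{\rho_c(\gamma)}$ of Corollary \ref{cor-comparability} rather than normalizing an $(n{+}1)$-st chart point via hyperconvexity as in Claim \ref{claim-charts-good}---are legitimate simplifications, but the underlying argument is the same.
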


\begin{proof}
    The idea is that points that are both close together in $\mathcal{B}_{\rho_c}$ and close to $\Xi^1_{\rho_c}$ ($c \in \{a,b\}$) are the image under $\Phi_{\rho_c}$ of pairs of points in $B_n$ specified by a quadruple of clustered-together points in $\partial \Gamma$.
    The argument of the Fuchsian case is effective for such points with the two modifications of using loxodromy in place of proximality and the relevant application of the uniform convergence group property of $\Gamma$ using the points on the outside of the cluster.
    The need to control more quantities in this setting makes the proof rather technically involved; the reader may wish to refer back to \S \ref{s-teich-space-case} for a model.

    Our proof breaks into the following stages, which we have labelled below:
    \begin{enumerate}
        \item \label{step-1-notation-clustering} Notation and basic features of pairs of points under consideration,
        \item \label{step-2-proximality-construction} Initial Abels-Margulis-Soifer \cite{abelsMargulisSoifer1995semigroups} application and the main construction,
        \item \label{step-3-loxodromy-refinement} Application of loxodromy refinement to control placements of involved points, 
        \item \label{step-4-affine-charts} Uniformly controlled choice of affine chart,
        \item \label{step-5-actual-computation} The final computation.
    \end{enumerate}

    \medskip
    \noindent (\ref{step-1-notation-clustering}). Let us set notation.
    Fix a background metric $d_{\partial \Gamma}$ on $\partial \Gamma$.
    Writing $B_n= (\partial \Gamma_1^2 \sqcup ... \sqcup \partial \Gamma_{k_n}^2)/\sim$, we distinguish between membership in each term by denoting a pair $(x,y) \in \partial \Gamma_j^2$ ($j \in \mathcal{K}_n$) by $(x,y)_j$. For any $\kappa, \eta > 0$ and $c \in \{a,b\}$, we define $\Xi^2_{c}(\kappa) \subset \Xi^2_{\rho_c}$ to be the complement of $N_\kappa(\Xi^1_{\rho_c})$, and $\text{Far}_c(\eta) \subset \mathcal{B}_{\rho_c}^{(2)}$ to be the points $(l_1, l_2)$ so $\delta_0(l_1, l_2) \geq \eta$.
    Let $U_{\kappa,c}$ be $N_{\kappa}(\Xi^1_{\rho_c})$ and $V_{\eta, c} = \mathcal{B}_{\rho_c}^{(2)} - \text{Far}_{c}(\eta)$.
    Let $\mathscr{O}_{\kappa} \subset B_{n}$ be $\Phi_a^{-1}(U_{\kappa, a}) \cap \Phi_b^{-1}(U_{\kappa, b})$.
    Let $\mathscr{Q}_{\kappa, \eta} \subset B_n^{(2)}$ be the collection of pairs $(p,q)$ with $p, q \in \mathscr{O}_{\kappa}$ and $\Phi_c(p, q) \in V_{\eta,c}$ for all $c \in \{a,b\}$.
    
    Using Lemma \ref{lemma-holder-regularity-off-frenet}, for any $\kappa$ and $\eta$, there are constants $C_{\Xi}(\kappa)$ and $C_{\text{Far}}(\eta)$ so that
    \begin{align}
        \frac{\delta_0(\Phi_{\rho_a}(p), \Phi_{\rho_a}(q))}{\delta_0(\Phi_{\rho_b}(p),\Phi_{\rho_b}( q))^{\alpha_{ab}^U}} &\leq C_{\Xi}(\kappa)  &(p, q \notin \mathscr{O}_{\kappa}),\label{eq-holder-lb-pre-controlled} \\
        \frac{\delta_0(\Phi_{\rho_a}(p), \Phi_{\rho_a}(q))}{\delta_0(\Phi_{\rho_b}(p),\Phi_{\rho_b}( q))^{\alpha_{ab}^U}} &\leq C_{\text{Far}}(\eta) &((\Phi_{\rho_b}(p),\Phi_{\rho_b}(q)) \in \text{Far}_b(\eta)). \nonumber 
    \end{align}
    We say points $x_1, x_2, x_3, x_4 \in \partial \Gamma$ are \textit{$\nu$-clustered} if $\text{max}_{1 \leq i \neq j \leq 4}d_{\partial \Gamma}(x_i, x_j) < \nu.$    
    We do not require the points be distinct.
    If $x_1,x_2, x_3, x_4$ are $\nu$-clustered, we call $\{x_1, x_2, x_3, x_3\}$ a $\nu$\textit{-cluster}.
    For any $\nu$ sufficiently small, the complement of any $\nu$-clustered $x_1, x_2, x_3, x_4 \in \partial \Gamma$ contains an interval $I$ with diameter exceeding $2\text{diam}(\partial \Gamma) / 3$.
    Then there are $x_l, x_r \in \{x_1, x_2,x_3,x_4\}$ so that $\partial I = \{x_l,x_r\}$ and for any $z \in I$ the triple $(x_l, x_r, z)$ is positively oriented.
    Call $x_l, x_r$ the \textit{edge points} of $x_1, x_2,x_3,x_4$.
    
    We shall repeatedly take the cluster associated to a pair of points $(p, q) \in \mathcal{Q}_{\eta, \kappa}$ by writing $p = (x_1, y_1)_i, q = (x_2, y_2)_j$ and considering $\{x_1, y_1, x_2,y_2\}$.
    Let us denote this cluster by $\text{cl}(p,q)$.
    We note that some data is forgotten by taking $\text{cl}(p,q)$: which copy of $\partial \Gamma^{2}_k$ points are in, and the order of points in $\partial \Gamma^{2}_k$.
    This data is salient away from $\Xi^1_{\rho_c}$ ($c \in \{a,b\}$), and does not end up needing to be explicitly used in the following.
    
    The following is a direct consequence of $\Phi_{\rho}$ being a homeomorphism:    
    \begin{claim}[Clustering Characterization]\label{claim-cluster-characterization}
    For any $\nu > 0$ there are $\eta(\nu), \kappa(\nu) > 0$ so that ${\rm{cl}}(p,q)$ is a $\nu$-cluster for all $(p, q) \in \mathscr{Q}_{\kappa, \eta}$.
    
    On the other hand, given any $\nu > 0$, there are strictly positive $\kappa(\nu)$ and $\eta(\nu)$ so that if $(p, q) \in B_n^{(2)}$ has ${\rm{cl}}(p, q)$ not a $\nu$-cluster, then for each $c \in \{a,b\}$ one of the following holds:
    
    \begin{enumerate}
        \item \label{frenet-separated} Both $\Phi_{\rho_c}(p), \Phi_{\rho_c}(q)$ are in $\Xi^2_c(\kappa(\nu))$,
        \item $(\Phi_{\rho_c}(p), \Phi_{\rho_c}(q)) \in {\rm{Far}}_c(\eta(\nu))$.
    \end{enumerate} 
    \end{claim}

    \begin{proof}
        The first claim is clear.
        The second claim is clear, with the weakened conclusion that one of $\Phi_{\rho_c}(p), \Phi_{\rho_c}(q) \in \Xi^2_c(\kappa'(\nu))$ or $(\Phi_{\rho_c}(p), \Phi_{\rho_c}(q)) \in \text{Far}_c(\eta'(\nu))$.
        We can then take $\eta'' > 0$ so that if $\Phi_{\rho_c}(p) \in \Xi^2_c(\kappa'(\nu))$ and $\Phi_{\rho_c}(q) \in U_{\kappa'(\nu)/2,c}$ then $(\Phi_{\rho_c}(p),\Phi_{\rho_c}( q)) \in \text{Far}_c(\eta'')$.
        Adjusting to $\kappa = \kappa'(\nu)/2$ and $\eta = \text{min}(\eta', \eta'')$ then gives the claim.
    \end{proof}

    Due to Lemma \ref{lemma-holder-regularity-off-frenet}, equations (\ref{eq-holder-lb-pre-controlled}), and Claim \ref{claim-cluster-characterization}, for the conclusion we may take arbitrarily small but fixed $\nu, \kappa, \eta > 0$ to be determined later and only consider points $(p, q) \in \mathscr{Q}_{\kappa, \eta}$ so that $\text{cl}(p, q)$ is a $\nu$-cluster.
    We call points $(p, q)$ satisfying the above \textit{$(\kappa, \eta, \nu)$-adapted}, and write the collection of $(\kappa,\eta, \nu)$-adapted pairs as $\mathscr{R}_{\kappa, \eta, \nu}$.

\medskip

\noindent (\ref{step-2-proximality-construction}). The next piece of setup parallels \S \ref{s-teich-space-case} directly. 
    Let $r > \varepsilon > 0$ be sufficiently small and fixed.
    By Fact \ref{fact-loxodromy}, there is a finite set $F \subset \Gamma$ so that for all $\gamma \in \Gamma$ there is an $\eta \in F$ so that $\rho_a(\eta \gamma)$ and $\rho_b(\eta \gamma)$ are $(r,\varepsilon)$-loxodromic.
    As $\Gamma$ is a uniform convergence group, there is a compact subset $K \subset \partial \Gamma^{(3)+}$ so that for any $t \in \partial \Gamma^{(3)+}$ there is a $\gamma \in \Gamma$ so that $\gamma t \in K$.
    Replacing $K$ with the compact set $L = FK$ and setting $$\Gamma_{(r,\varepsilon)}^{a,b} = \{ \gamma \in \Gamma \mid \rho_a(\gamma) \text{ and } \rho_b(\gamma) \text{ are } (r,\varepsilon) \text{ loxodromic}\},$$ for every $t \in \partial \Gamma^{(3)+}$ there is a $\gamma \in \Gamma_{(r,\varepsilon)}^{a,b}$ so $\gamma t \in L$.

    We analyze the following construction for the rest of the proof.    
    Let $(p, q) \in \mathscr{R}_{\kappa_0, \eta_0, \nu_0}$ for initial $ \kappa_0, \eta_0, \nu_0 > 0$ with $\nu_0$ less than $\text{diam}(\partial \Gamma)/3$. 
    Then $\text{cl}(p, q)$ is a $\nu_0$-cluster with at least two distinct points.
    Let $x_l \neq x_r$ be the edge points.
    Pick any third point $z$ so $d_{\partial \Gamma}(z, x) > \text{diam}(\partial \Gamma)/3$ for all $x \in \text{cl}(p, q)$.
    Then there is some $\gamma_{pq} \in \Gamma_{r, \varepsilon}^{a,b}$ so that $\gamma_{pq}(x_l, x_r, z) \in L$.

\medskip
\noindent (\ref{step-3-loxodromy-refinement}). As in \S \ref{s-teich-space-case}, our freedom to choose $\eta, \kappa,\nu$ small allows us to only consider $\gamma_{pq}$ so that both $\gamma_{pq}$ and $\gamma_{pq}^{-1}$ are elements of $\Gamma^{a,b}_{(r,\varepsilon')}$ for a fixed $\varepsilon' > 0$ of our choosing:

    \begin{claim}\label{claim-needed-proximality-improvement}
        For any $\varepsilon' > 0$ there are $\nu, \kappa, \eta > 0$ so that if $(p, q) \in \mathscr{R}_{\kappa, \eta, \nu}$ then $\gamma_{pq} \in \Gamma^{a,b}_{(r, \varepsilon')}$ and $\gamma_{pq}^{-1} \in \Gamma^{a,b}_{(r,\varepsilon')}$.
        We call such points {\rm{$(\kappa, \eta,\nu, \varepsilon')$-adapted}}, and write the collection of $(\kappa, \eta, \nu, \varepsilon')$-adapted pairs as $\mathscr{S}_{\kappa, \eta,\nu, \varepsilon'}$.
    \end{claim}

    \begin{proof}
        We retain the above notation.
        As $L$ is compact, there is a $r_1 > 0$ so that $d_{\partial \Gamma}(x_i, x_j) \geq r_1$ for all $(x_1, x_2, x_3) \in L$ and $1 \leq i \neq j \leq 3$. 
        As $\gamma_{pq}(x_l, x_r, z) \in L$ and $\text{cl}(p, q)$ is a set with at most four elements there is a $r_2 > 0$ so that for each $c \in \{a,b\}$ either:
    \begin{enumerate}
        \item $(\Phi_{\rho_c}(\gamma_{p q} p), \Phi_{\rho_c}(\gamma_{pq}q))\in \text{Far}_c(r_2)$,
        \item At least one of $\Phi_{\rho_c}(\gamma_{pq} p), \Phi_{\rho_c}(\gamma_{pq}q)$ is not in $U_{r_2, c}$.
    \end{enumerate}
    For $c \in \{a,b\}$ let $K_{r_2,c} \subset B_{n}^{2}$ be the compact set of pairs $(p, q)$ so that $\delta_0(\Phi_c(p),\Phi_c(q)) \geq r_2$.
    
    In the first case $(p, q) \in \mathscr{R}_{\kappa,\eta,\nu}$ so $\delta_0(\Phi_{\rho_c}(p), \Phi_{\rho_c}(q)) \leq \eta$ and $(\gamma_{pq} p, \gamma_{pq}q) \in K_{r_2,c}$.
    In the second case, there is a point $w \in \partial \Gamma$ and $p' \in \{p,q\}$ so $\delta_0( \Phi_{\rho_c}(p'), \Phi_{\rho_c}(w,w)) \leq \kappa$ and $(\gamma_{p q} p', \gamma_{pq}(w,w)) \in K_{r_2,c}$.
    We conclude from the Loxodromy Refinement Proposition that for any $\varepsilon' > 0$ there is a $\tau(\varepsilon') > 0$ so that for any $\kappa, \eta, \nu$ with $\kappa, \eta < \tau(\varepsilon')$ that $\rho_c(\gamma_{pq})$ is $(r,\varepsilon')$-biloxodromic for all $c \in \{a,b\}$, as desired.    \end{proof}

    We next use the control over quantitative loxodromy given by Claim \ref{claim-needed-proximality-improvement} to arrange for $p, q, \gamma_{pq}p,\gamma_{pq}q, \gamma_{pq}^+$, and $\gamma_{pq}^-$ to be sufficiently well-placed to give uniform control over the choice of affine charts in the main estimate.
    The following consequence of the hyperconvexity of $\xi_{\rho_a},\xi_{\rho_b}$ and that $\Phi_{\rho_c}$ is a homeomorphism of compact metric spaces is useful in this.

    \begin{claim}\label{claim-easy-homeo-compact}
        For any $\varepsilon_1 > 0$ there is a $\tau > 0$ so that for any $c \in \{a,b\}$, $k \in \mathcal{K}_n$, and $z \in \partial \Gamma$:
        \begin{enumerate}
            \item If $p = (x,y)_k$ and $\Phi_{\rho_c}(p) \in N_\tau(\xi^{n-1}_{\rho_c}(z))$ then $d_{\partial \Gamma}(x, z) < \varepsilon_1$ or $d_{\partial \Gamma}(y,z) < \varepsilon_1$,
            \item If $p = (x,y)_k$ and $\Phi_{\rho_c}(p) \in N_\tau(\xi^1_{\rho_c}(z))$ then $d_{\partial \Gamma}(x,z) < \varepsilon_1$ and $d_{\partial \Gamma}(y,z) < \varepsilon_1$.
        \end{enumerate}
    \end{claim}

    \begin{proof}
        We prove the first claim. The second is similar.
        The space ${B}_n \times \partial \Gamma$ is compact, and the function $\mu_n : B_n \times \partial \Gamma$ given by $(p, z) \mapsto \delta_0(\Phi_{\rho_c}(p), \xi^{n-1}_{\rho_c}(z))$ is continuous and zero exactly on the set $Z = \{((x,y)_k,z) \mid x = z \text{ or } y = z \}$, by the hyperconvexity of $\xi_{\rho_c}$.
        
        The set $K_{\varepsilon} \subset B_n \times \partial \Gamma$ given by $\{((x,y)_k, z) \mid d_{\partial \Gamma}(x,z) \geq \varepsilon_1 \text{ and } d_{\partial \Gamma}(y,z) \geq \varepsilon_1 \}$ is compact and disjoint from $Z$.
        So the restriction of $\mu_n$ to $K_{\varepsilon_1}$ has a positive infimum $\tau > 0$.
        So any $p \in B_n$ with $\Phi_{\rho_c}(p) \in N_\tau(\xi^{n-1}_{\rho_c}(z))$ has $(p,z) \in (B_n \times \partial \Gamma) - K_{\varepsilon_1}$. This proves the claim.
    \end{proof}

    We next constrain the placement of the edges of $\text{cl}(p,q)$ relative to $\gamma_{pq}^-$ and the placement of $\gamma_{pq} z$ relative to $\gamma_{pq}^+$. Recall $z = z(p, q)$ is the third point in $\partial \Gamma$ chosen in determining $\gamma_{pq}$.

    \begin{claim}\label{claim-basic-fp-separation}
        For all $\sigma^+,\sigma^- > 0$ there exist $\eta, \kappa, \nu, \varepsilon' > 0$ so that for any $(p, q) \in \mathscr{S}_{\eta, \kappa, \nu, \varepsilon'}$, the edges $x_l, x_r$ of ${\rm{cl}}(p, q)$ satisfy:
        \begin{enumerate}
            \item $d_{\partial \Gamma}(x_l,\gamma_{pq}^-), d_{\partial \Gamma}(x_r, \gamma_{pq}^-) < \sigma^-$,
            \item $d_{\partial \Gamma}(\gamma_{pq} z, \gamma_{pq}^+) < \sigma^+$.
        \end{enumerate}
    \end{claim}

    \begin{proof}
        We begin with the first claim.
        We use the same dichotomy as in the proof of Claim \ref{claim-needed-proximality-improvement}.
        First, let $(\Phi_{\rho_c}(p), \Phi_{\rho_c}(q)) \in \text{Far}_c(r_2)$, so one of $\Phi_{\rho_c}(\gamma_{p q} p)$ and $\Phi_{\rho_c}(\gamma_{p q}q)$ is not in $N_{r_2/2}(\xi^{1}_{\rho_c}(\gamma_{pq}^+))$.
        If $\gamma_{pq}$ is $(r,\varepsilon')$-biloxodromic with $\varepsilon' < r_2/2$, this implies that at least one of $\Phi_{\rho_c}(p)$ and $\Phi_{\rho_c}(q)$ is in $N_{\varepsilon'}(\xi_{\rho_c}^{n-1}(\gamma_{pq}^-))$.
        Without loss of generality, take this to hold for $p$.
        Claim \ref{claim-easy-homeo-compact} then shows that for $\varepsilon'$ sufficiently small, writing $p = (x,y)_k$, some $w \in \{x, y\}$ and $\gamma_{pq}^-$ have $d_{\partial \Gamma}(w, \gamma_{pq}^-) < \sigma^-/2 $.
        With $\nu$ chosen so $\nu < \sigma^-/2$, the claim holds.

        The other case is that for at least one $p' \in \{p,q\}$ there is a $c \in \{a,b\}$ so $\Phi_{\rho_c}(\gamma_{pq} p')$ is not in $U_{r_2,c}$.
        Then $\Phi_{\rho_c}(\gamma_{p q}p')$ is not in $N_{r_2}(\xi^1_{\rho_c}(\gamma_{pq}^+))$, and the claim follows similarly.

        We now arrange for the second conclusion to hold.
        By Claim \ref{claim-easy-homeo-compact}, there is a $\tau > 0$ so that if $\delta_0(\xi^1_{\rho_c}(\gamma_{pq}z), \xi^1_{\rho_c}(\gamma_{p q}^+)) < \tau$ for some $c \in \{a,b\}$, then $d_{\partial \Gamma}(\gamma_{p q} z, \gamma_{p q}^+) < \sigma^+$.
        Since we have arranged for $d_{\partial \Gamma}(z, x_l)$ and $d_{\partial \Gamma}(z,x_r)$ to be at least $\text{diam}(\partial \Gamma)/3$, there is a $r_4 > 0$ independent of $p, q$ so $\delta_0(\xi^1_{\rho_c}(z), \Phi_{\rho_c}(p')) \geq r_4$ for all $p' \in \{p, q\}$ and $c\in \{a,b\}$.
        Applying the first conclusion with possibly decreased $\sigma^-$ gives constants $(\kappa_0, \eta_0, \nu_0,\varepsilon'_0)$ so that for $(p, q) \in \mathscr{S}_{\kappa_0, \eta_0, \nu_0, \varepsilon'_0}$ the point $z$ is forced to be uniformly separated from $\gamma_{pq}^-$.
        Applying Claim \ref{claim-easy-homeo-compact} produces a $r_5 > 0$ so the separation $\delta_0(\Phi_{\rho_c}(z), \xi^{n-1}_{\rho_c}(\gamma_{pq}^-)) \geq r_5$ holds for all $(p,q) \in \mathscr{S}_{\kappa_0, \eta_0, \nu_0, \varepsilon'_0}$ and $c \in \{a,b\}$.
        Now, take $\varepsilon'$ so that $\varepsilon' < \min \{\tau, \varepsilon'_0, r_5 \}$.
        Then $\rho_c(\gamma_{p q})\xi^1_{\rho_c}(z) \in N_\tau(\xi^1_{\rho_c}(\gamma_{p q}^+))$ for all $(p,q) \in \mathscr{S}_{\kappa_0, \eta_0, \nu_0, \varepsilon'}$ and $c \in \{a,b\}$.
        So $d_{\partial \Gamma}(\gamma_{p q}z, \gamma_{p q}^+) < \sigma^+$ for all such $(p,q)$.
    \end{proof}

    The previous claim also allows us to separate both $p'$ and $\gamma_{pq}p'$ from $\gamma_{pq}^+$ for $p' \in \{p,q\}$.

    \begin{claim}\label{claim-upgraded-separation}
        There is a $r_6 > 0$ so that for all sufficiently small $(\kappa, \eta, \nu, \varepsilon') > 0$ and $(p,q) \in \mathscr{S}_{\kappa, \eta, \nu,\varepsilon'}$, for all $c \in \{a,b\}$ and $p' \in \{p,q\}$:
        \begin{enumerate}
            \item $\delta_0(\Phi_{\rho_c}(p'), \xi^{n-1}_{\rho_c}(\gamma_{p q}^+)) \geq r_6$,
            \item $\delta_0(\Phi_{\rho_c}(\gamma_{p q}p'), \xi^{n-1}_{\rho_c}(\gamma_{p q}^+)) \geq r_6$.
        \end{enumerate}
    \end{claim}
    \begin{proof}
        Corollary of Claims \ref{claim-easy-homeo-compact} and \ref{claim-basic-fp-separation} together with the compactness of $L$.
    \end{proof}

\medskip
    
\noindent (\ref{step-4-affine-charts}). We now define the affine charts in which we carry out our estimate.
    Let $(p, q) \in \mathscr{S}_{\kappa, \eta, \nu, \varepsilon'}$ be given for small parameters to be specified later, and let $\gamma_{p q}$ be associated.
    Let $x_f \in \{x_l,x_r\}$ be the edge point of $\text{cl}(p,q)$ for which $\gamma_{pq}x_f$ is as furthest from $\gamma_{pq}^-$.
    For $c \in \{a,b\}$, take an affine chart $\mathcal{A}_c$ for $\RP^{n-1}$ so that in $\mathcal{A}_c$:
        \begin{enumerate}
           \item $\xi^{1}_{\rho_c}(\gamma_{pq}^-)$ is the origin,
           \item $\xi^{n-1}_{\rho_c}(\gamma_{pq}^+)$ is the hyperplane at infinity,
           \item For a basis $v_1, v_2, ..., v_{n-1}$ of $\xi^{n-1}_{\rho_c}(\gamma_{pq}^+)$ of eigenvectors of the restriction of $\rho_c(\gamma_{pq})$ to $\xi^{n-1}_{\rho_c}(\gamma_{pq}^+)$ listed in order of decreasing modulus and with $v_n$ a nonzero vector in $\xi^1_{\rho_c}(\gamma_{pq}^-)$, the $x_{j}$ axis ($j = 1, ..., n-1$) is the intersection of $\text{span}\{v_n,v_j\}$ with $\mathcal{A}_c$,
           \item $\xi^{1}_{\rho_c}(\gamma_{p q} x_f) = (1, ..., 1)$.
        \end{enumerate}
    This specifies $\mathcal{A}_c$ completely.
    Note the final condition is possible because of hyperconvexity of $\xi_{\rho_c}$, in particular \cite{guichard2005dualite} Th\'eor\`eme 2 and Prop. 7.
    Let $\delta_{\mathcal{A}_c}$ be the Euclidean metric on $\mathcal{A}_c$.
    
    The following allows us to work in these well-adapted affine charts.
    
    \begin{claim}\label{claim-charts-good}
        There are $C, R < \infty$ and $\kappa,\eta, \nu, \varepsilon' > 0$ so that for all $(p, q) \in \mathscr{S}_{\kappa, \eta,\nu, \varepsilon'}$ for each $c \in \{a,b \}$ and $p' \in \{p,q\}$:
        \begin{enumerate}
            \item $\Phi_{\rho_c}(p') $ and $\Phi_{\rho_c}(\gamma_{pq}p')$ are in $N_{R}(0)$ in $\mathcal{A}_c$ (with respect to $\delta_{\mathcal{A}_c}$),
            \item On $N_R(0) \subset \mathcal{A}_c$ the metrics $\delta_{\mathcal{A}_c}$ and $\delta_0$ are $C$-multiplicatively comparable.
        \end{enumerate}
    \end{claim}

    \begin{proof}
        The key fact is that $\text{PGL}_n(\bbR)$ acts simply transitively on $(n+1)$-ples of points in $\RP^{n-1}$ in general position.
        Denote by $l_0, l_1, ..., l_n$ the points in $\RP^{n-1}$ so that in the model inducing the metric $\delta_0$ with $e_1, ..., e_n$ an orthonormal basis for $\bbR^n$ we have $l_i = [e_i]$ ($i = 1, ..., n$), and $l_0$ a fixed chosen point so $l_0, l_1, ..., l_n$ are in general position.
        Let $\mathcal{A}_0$ be the affine chart on $\RP^{n-1}$ specified by placing $l_n$ at the origin, $\text{span}\{l_1, ..., l_{n-1}\}$ as the hyperplane at infinity, the $x_j$-axis as $\text{span}\{v_n, v_j\}$ ($j = 1, ..., n-1$) and $l_0$ at $(1,..., 1)$.
        
        The affine chart $\mathcal{A}_c$ differs from $\mathcal{A}_0$ by the element $g_{pq}$ of $\text{PGL}_n(\bbR)$ that has $g[l_i] = v_i$ for $i = 1, ..., n$ and $g[l_0] = \xi^1_{\rho_c}(\gamma_{pq}x_f)$.
        To prove multiplicative comparability of $\delta_{\mathcal{A}_c}$ and $\delta_0$ on the complement of a neighborhood of fixed radius about $\xi^{n-1}_{\rho_c}(\gamma_{pq}^+)$, it thus suffices to place $\xi^1_{\rho_c}(\gamma_{pq}x_f), [v_1], ..., [v_n]$ inside of a fixed compact subset of the collection of $(n+1)$-ples of points in $\RP^{n-1}$ in general position.
    
        Let us first note that there is a $r_7 > 0$ independent of $p, q$ so that $d_{\partial \Gamma}(\gamma_{p q} x_f , \gamma_{pq}^-) > r_7$, $d_{\partial \Gamma}(\gamma_{pq}x_f, \gamma_{p q}^+) > r_7$, and $d_{\partial \Gamma}(\gamma_{pq}^-, \gamma_{pq}^+) > r_7$.
        Note the set $\mathscr{N}_{r_7}$ of $r_7$-separated triples of points in $\partial \Gamma$ is compact.
        With the convention $\xi^0(x) = \{0\}$ for all $x \in \partial \Gamma$ in the edge cases, $(\xi^{k-1}(x) \oplus \xi^{n-k}(z)) \cap \xi^1(y) = \{0 \}$ for any $(x,y,z) \in \partial \Gamma^{(3)+}$, $c \in \{a,b \}$, and $k \in \{1, ..., n\}$ by \cite{guichard2005dualite} Th\'eor\`eme 2 and Prop. 7.
        So there is a $r_8 > 0$ so that for all $(p, q) \in \mathscr{S}_{\kappa,\eta, \nu, \varepsilon'}$, with $\kappa,\eta,\nu\,\varepsilon'$ sufficiently small to apply Claim \ref{claim-upgraded-separation}, $c \in \{a,b\}$, and $(n-1)$-ples of eigenvectors $v_1, ...,v_{k-1}, v_{k+1}, ..., v_n$ of $\rho_c(\gamma_{pq})$, \begin{align*} \delta_0(\xi^1_{\rho_c}(\gamma_{pq} x_f), \text{span}(v_1, ..., v_{k-1}, v_{k+1}, ..., v_n))\geq r_8.
        \end{align*}

        This, together with Prop. \ref{prop-guichard-compact} places the point $(\xi^1_{\rho_c}(\gamma_{pq} x_f), [v_1], [v_2], ..., [v_n])$ inside of a fixed compact subset $\mathscr{K}$ of the space of $(n+1)$-ples of points in general position in $\RP^{n-1}$.
        So for every $r_{9} > 0$ there is a $C(r_9) < \infty$ so that $\delta_{\mathcal{A}_c}$ and $\delta_0$ are $C(r_9)$-comparable on the complement of a $r_{9}$-neighborhood of $\xi^{n-1}(\gamma_{pq}^+)$.

        Applying Claims \ref{claim-easy-homeo-compact} and \ref{claim-basic-fp-separation} produces for any sufficiently small $r_{9}> 0$ quality constants $\kappa,\eta, \nu, \varepsilon' > 0$ so that if $(p, q) \in \mathscr{S}_{\kappa,\eta, \nu,\varepsilon'}$ then $p, q , \Phi_{\rho_c}(p), \Phi_{\rho_c}(q)$ are not in $N_{r_{9}}(\xi_{\rho_c}^{n-1}(\gamma_{p q}^+))$ for all $c \in \{a,b\}$.
        The claim follows.
    \end{proof}

\medskip
\noindent (\ref{step-5-actual-computation}). The preliminaries to the main computation are now finished.
    In the affine chart $\mathcal{A}_c$,
    \begin{align*}
    \rho_c(\gamma_{pq}^{-1}) (x_1,x_2, ..., x_{n-1} ) &= \left( \frac{\lambda_n}{\lambda_{1}}x_1, \frac{\lambda_n}{\lambda_2}x_2, ..., \frac{\lambda_n}{\lambda_{n-1}}x_{n-1} \right) = \begin{pmatrix}
    \exp(-\ell^{\text{H}}_{\rho_c}(\gamma_{pq}^{-1}))x_1 \\
    \exp(-\ell^{\alpha_1+\alpha_2 + ... + \alpha_{n-2}}_{\rho_c}(\gamma_{pq}^{-1})) x_2 \\ 
    \vdots \\ 
    \exp(-\ell^{\alpha_1}_{\rho_c}(\gamma_{pq}^{-1}))x_{n-1} \end{pmatrix}^T \end{align*}
    so that for all $v, w \in \mathcal{A}_c$, \begin{align}
       e^{-\ell^{\text{H}}_{\rho_c}(\gamma_{pq}^{-1})}d_{\mathcal{A}_c}(v,w) \leq d_{\mathcal{A}_c}(\rho_c(\gamma_{pq}^{-1})v, \rho_c(\gamma_{pq}^{-1})w) \leq e^{-\ell^{\alpha_1}_{\rho_c}(\gamma_{pq}^{-1})} d_{\mathcal{A}_c}(v,w). \label{eq-slack-step}
    \end{align}
    
    Now let $(p, q)$ be $(\nu, \eta, \kappa, \varepsilon')$-adapted as in Claim \ref{claim-charts-good}.
    By Claim \ref{claim-cluster-characterization} applied to $L$ there is a $r_{10} > 0$ so that for all such $(p,q)$ and $c \in \{a,b\}$ the pair $(\Phi_{\rho_c}(\gamma_{pq}p) , \Phi_{\rho_c}(\gamma_{pq} q))$ is in $\Xi^2_c(r_{10})^2$ or $\text{Far}_c(r_{10})$.
    We then have
    \begin{align*}
            \frac{\delta_{\mathcal{A}_a}(\Phi_{\rho_a}(p), \Phi_{\rho_a}(q))}{\delta_{\mathcal{A}_b}(\Phi_{\rho_b}(p), \Phi_{\rho_b} (q))^{\alpha_{ab}^U}} &= \frac{\delta_{\mathcal{A}_a}(\rho_a(\gamma_{pq}^{-1}) \Phi_{\rho_a}(\gamma_{pq}p), \rho_a(\gamma_{pq}^{-1}) \Phi_{\rho_a}(\gamma_{pq}q))}
        {\delta_{\mathcal{A}_b}(\rho_b(\gamma_{pq}^{-1}) \Phi_{\rho_b} (\gamma_{pq}p), \rho_b(\gamma_{pq}^{-1})\Phi_{\rho_b}(\gamma_{pq} q))^{\alpha_{ab}^U}} \\
        &\leq \frac{e^{-\ell^{\alpha_1}_{\rho_a}(\gamma_{pq}^{-1})}}{e^{-\alpha_{ab}^U \ell^{\text{H}}_{\rho_b}(\gamma_{pq}^{-1})} } \frac{\delta_{\mathcal{A}_a}(\Phi_{\rho_a}(\gamma_{pq}p), \Phi_{\rho_a}(\gamma_{pq}q))}
        {\delta_{\mathcal{A}_b}(\Phi_{\rho_b} (\gamma_{pq}p), \Phi_{\rho_b}(\gamma_{pq} q))^{\alpha_{ab}^U}} \\
        &\leq \exp(\alpha_{ab}^U\ell^{\text{H}}_{\rho_b}(\gamma_{\rho_{pq}}^{-1}) -\ell^{\alpha_1}_{\rho_a}(\gamma_{pq}^{-1})) \max( C_{\Xi}(r_{10}), C_{\text{Far}}(r_{10}) ) \\
        &\leq \max( C_{\Xi}(r_{10}), C_{\text{Far}}(r_{10}) ).
        \end{align*}
    So $\Phi_{ab}$ is $\alpha_{ab}^U$-H\"older on a neighborhood $U$ of $\Xi^1_{\rho_b}$.
    Lemma \ref{lemma-holder-regularity-off-frenet} shows that $\Phi_{ab}$ is $\alpha_{ab}^U$-H\"older on the complement of a much smaller neighborhood $V$ of $\Xi^1_{\rho_b}$ with closure contained in $U$. The Lebesgue number lemma implies $\Phi_{ab}$ is $\alpha_{ab}^U$-H\"older.
\end{proof}

\subsection{Completeness}\label{ss-metrics-completeness}
The completeness properties of the bouquet coupling metric $D_B$ follow from a consequence of the Collar Lemma for Hitchin representations:

\begin{theorem}[Lee-Zhang \cite{leeZhang2017collar}, Corollary 1.4]\label{thm-collar-lemma-consequence} Let $\mathcal{C} = \{\gamma_1, ... \gamma_k\}$ be a collection of curves on $S$ containing a pants decomposition and so that the complement of $\mathcal{C}$ in $S$ is a union of disks. 
Then the map ${\rm{Hit}}_n(S) \to \bbR^k$ given by $\rho \mapsto (\ell_\rho^{\rm{H}}(\gamma_1), ..., \ell_\rho^{\rm{H}}(\gamma_k))$ is proper.
\end{theorem}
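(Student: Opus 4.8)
This is Corollary 1.4 of \cite{leeZhang2017collar}; the route I would take to prove it is as follows. It suffices to verify properness sequentially: if $\rho_m \to \infty$ in ${\rm{Hit}}_n(S)$ (that is, $(\rho_m)$ eventually leaves every compact set), then $\max_i \ell^{\rm H}_{\rho_m}(\gamma_i) \to \infty$. So I would assume instead that $\ell^{\rm H}_{\rho_m}(\gamma_i) \le M$ for all $i$ and $m$, with $M < \infty$ fixed, and split into two cases according to whether the Hilbert lengths of the $\gamma_i$ degenerate.

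\emph{Pinching case.} Suppose, after passing to a subsequence, $\ell^{\rm H}_{\rho_m}(\gamma_{i_0}) \to 0$ for some $i_0$. Since $\mathcal{C}$ has complement a union of disks, it is filling, so there is $\gamma_{j_0} \in \mathcal{C}$ with $i(\gamma_{i_0}, \gamma_{j_0}) \ge 1$. I would then apply the Collar Lemma for Hitchin representations of Lee-Zhang: for a Hitchin $\rho$ and curves $\alpha, \beta$ with $i(\alpha, \beta) \ge 1$ there is a quantitative inequality forcing $\beta$ to be long once $\alpha$ is short. Since $\ell^{\alpha_1}_{\rho_m}(\gamma_{i_0}) \le \ell^{\rm H}_{\rho_m}(\gamma_{i_0}) \to 0$, this yields $\ell^{\rm H}_{\rho_m}(\gamma_{j_0}) \ge \ell^{\alpha_1}_{\rho_m}(\gamma_{j_0}) \to \infty$, contradicting $\ell^{\rm H}_{\rho_m}(\gamma_{j_0}) \le M$.

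\emph{Thick case.} Otherwise $\ell^{\rm H}_{\rho_m}(\gamma_i) \in [\varepsilon, M]$ for some $\varepsilon > 0$, and I must contradict $\rho_m \to \infty$. Here I would pass to Fenchel-Nielsen-type (Bonahon-Dreyer) coordinates on ${\rm{Hit}}_n(S)$ adapted to the pants decomposition $P \subset \mathcal{C}$ supplied by hypothesis: the length parameters are bounded, since each simple-root length of a curve of $P$ is at most $\ell^{\rm H}_{\rho_m}(\gamma_i) \le M$ and is bounded below by the collar lemma applied to an intersecting curve of $\mathcal{C}$; and the twist/shear parameters are bounded, because the Hilbert length of a curve crossing a pants curve grows without bound as one twists, so the bounded Hilbert lengths of the remaining curves of $\mathcal{C}$ pin the twists down. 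With all coordinates bounded, $(\rho_m)$ stays in a compact set, the desired contradiction; this completes the properness argument and hence the theorem. I expect the thick case to be the only genuine difficulty: the collar inequality dispatches pinching at once, whereas controlling the twist parameters requires the Bonahon-Dreyer parametrization together with a linear-growth-in-twist estimate for Hilbert length (or, equivalently, the hands-on degeneration analysis of \cite{leeZhang2017collar}).
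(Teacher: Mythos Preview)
The paper does not supply a proof of this statement: it is quoted as Corollary~1.4 of \cite{leeZhang2017collar} and used as a black box in \S\ref{ss-metrics-completeness}. So there is no in-paper argument to compare your proposal against.

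As a sketch of the Lee--Zhang result itself, your pinching case is exactly right and is the heart of the matter: the collar inequality forces any curve crossing a pinched $\gamma_{i_0}$ to have unbounded Hilbert length. Your thick case is more schematic than you perhaps realize. Bonahon--Dreyer coordinates on ${\rm Hit}_n(S)$ for $n>2$ are not just lengths and twists along the pants curves; there are also triangle invariants and additional shear parameters internal to each pair of pants, and the total dimension is $(n^2-1)(2g-2)$, far exceeding what a length--twist picture accounts for. Bounding all of these from the Hilbert lengths of finitely many curves is the actual work, and ``Hilbert length grows without bound as one twists'' does not obviously cover the non-twist shear directions. Your closing caveat that the thick case is the real difficulty is therefore well placed; a complete argument along these lines needs either the detailed degeneration analysis in \cite{leeZhang2017collar} or a separate properness statement for the internal pants parameters.
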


Theorems B and D follow from the identity $D_B^*(\rho_a, \rho_b) = \max(D_B(\rho_a, \rho_b), D_B(\rho_b, \rho_a))$ and:

\begin{proposition}
    Let $R > 0$ and $\rho_0 \in {\rm{Hit}}_n(S)$ for $n > 3$. Then the forward ball $B_{R,f}(\rho_0) =  \{ \rho\in {\rm{Hit}}_n(S) \mid D_B(\rho_0, \rho) \leq R \}$ is compact.
\end{proposition}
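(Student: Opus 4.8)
The plan is to trap the forward ball $B_{R,f}(\rho_0)$ inside a set that Theorem \ref{thm-collar-lemma-consequence} (Lee--Zhang) guarantees is compact, and then invoke closedness. First I would record that, since $D_B$ is a continuous metric on ${\rm{Hit}}_n(S)$ (see the Remarks following Theorem \ref{thm-completeness}), the function $D_B(\rho_0, \cdot) \colon {\rm{Hit}}_n(S) \to [0,\infty]$ is continuous, so $B_{R,f}(\rho_0) = D_B(\rho_0, \cdot)^{-1}([0,R])$ is closed. It therefore suffices to produce a single compact subset of ${\rm{Hit}}_n(S)$ that contains $B_{R,f}(\rho_0)$.

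Next I would convert the hypothesis $D_B(\rho_0, \rho) \leq R$ into control on Hilbert lengths. By the lower bound in Theorem \ref{thm-metric-bounds}, membership $\rho \in B_{R,f}(\rho_0)$ forces $d_{\rm{Th}}^{\alpha_i}(\rho_0, \rho) \leq R$ for every $i \in \{1, \dots, n-1\}$, which unwinds via the formula for $d_{\rm{Th}}^{\alpha_i}$ to $\ell_\rho^{\alpha_i}(\gamma) \leq e^R \ell_{\rho_0}^{\alpha_i}(\gamma)$ for all $\gamma \in \Gamma - \{e\}$. Summing over $i$ and using ${\rm{H}} = \sum_{i=1}^{n-1} \alpha_i$ then gives $\ell_\rho^{\rm{H}}(\gamma) \leq e^R \ell_{\rho_0}^{\rm{H}}(\gamma)$ for all $\gamma \in \Gamma - \{e\}$.

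Then I would fix a curve system $\mathcal{C} = \{\gamma_1, \dots, \gamma_k\}$ as in Theorem \ref{thm-collar-lemma-consequence} and set $\Psi \colon {\rm{Hit}}_n(S) \to \bbR^k$ to be the proper map $\rho \mapsto (\ell_\rho^{\rm{H}}(\gamma_1), \dots, \ell_\rho^{\rm{H}}(\gamma_k))$. The previous step, together with $\ell_\rho^{\rm{H}}(\gamma_j) \geq 0$, shows $\Psi(B_{R,f}(\rho_0))$ lies in the compact box $Q = \prod_{j=1}^k [0, e^R \ell_{\rho_0}^{\rm{H}}(\gamma_j)] \subset \bbR^k$. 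Hence $B_{R,f}(\rho_0) \subset \Psi^{-1}(Q)$, which is compact by properness of $\Psi$; being a closed subset of the compact set $\Psi^{-1}(Q)$, the ball $B_{R,f}(\rho_0)$ is itself compact.

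I do not expect a genuine analytic obstacle here: essentially all of the content has been absorbed into Theorem \ref{thm-metric-bounds} (for the lower bound on $D_B$ in terms of simple-root stretch metrics) and into the Lee--Zhang properness Theorem \ref{thm-collar-lemma-consequence}. The one point that deserves a moment's care is the claim that the forward ball is closed, i.e. that $D_B(\rho_0, \cdot)$ is continuous rather than merely lower semicontinuous; this is exactly where the continuity of $D_B$ recorded in the Remarks after Theorem \ref{thm-completeness} is needed.
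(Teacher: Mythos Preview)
Your proposal is correct and follows essentially the same approach as the paper: both arguments use the lower bound of Theorem~\ref{thm-metric-bounds} to bound the simple-root lengths $\ell_\rho^{\alpha_i}(\gamma)$ in terms of those for $\rho_0$, sum to bound the Hilbert lengths, and then invoke Lee--Zhang properness together with closedness of the forward ball. Your write-up is slightly more explicit in making the constant $e^R$ and the box $Q$ visible, but the logical skeleton is identical.
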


\begin{proof}
    Since $D_B$ induces the standard topology on $\text{Hit}_n(S)$, the forward ball $B_{R,f}$ is closed. So it suffices to find a compact set containing $B_{R,f}(\rho_0)$.
    Let $\mathcal{C} = \{\gamma_1, ..., \gamma_k\}$ be a set of curves satisfying the hypotheses of Theorem \ref{thm-collar-lemma-consequence}.
    If $\rho \in B_{R,f}(\rho_0)$ there is a $K$ so that $\ell^{\alpha_j}_{\rho}(\gamma) < K$ for all $\gamma_i \in \mathcal{C}$ and $j \in \{ 1, ..., n-1\}$ by Theorem C and finiteness of $\mathcal{C}$.
    So $\ell^{\text{H}}_{\rho}(\gamma) \leq (n-1)K$ for all $\gamma \in \mathcal{C}$ and $\rho \in B_{R,f}(\rho_0)$.
    Theorem \ref{thm-collar-lemma-consequence} shows that the collection of Hitchin representations $\rho$ with $\ell_\rho^{\text{H}}(\gamma) \leq (n-1)K$ for all $\gamma \in \mathcal{C}$ is compact.
\end{proof}

\bibliographystyle{plain}
\bibliography{refs}

\begin{thebibliography}{10}

\bibitem{abelsMargulisSoifer1995semigroups}
H.~Abels, G.~A. Margulis, and G.~A. Soifer.
\newblock Semigroups containing proximal linear maps.
\newblock {\em Israel J. Math.}, 91(1-3):1--30, 1995.

\bibitem{ahlfors1954quasiconformal}
Lars~V. Ahlfors.
\newblock On quasiconformal mappings.
\newblock {\em J. Analyse Math.}, 3:1--58; correction, 207--208, 1954.

\bibitem{benoist1997proprietes}
Yves Benoist.
\newblock Propri\'{e}t\'{e}s asymptotiques des groupes lin\'{e}aires.
\newblock {\em Geom. Funct. Anal.}, 7(1):1--47, 1997.

\bibitem{benoist2004convexesI}
Yves Benoist.
\newblock Convexes divisibles. {I}.
\newblock In {\em Algebraic groups and arithmetic}, pages 339--374. Tata Inst. Fund. Res., Mumbai, 2004.

\bibitem{bridgeman2015pressure}
Martin Bridgeman, Richard Canary, Fran\c{c}ois Labourie, and Andres Sambarino.
\newblock The pressure metric for {A}nosov representations.
\newblock {\em Geom. Funct. Anal.}, pages 1089--1179, 2015.

\bibitem{carvajales2022thurstonsasymmetric}
Le{\'o}n Carvajales, Xian Dai, Beatrice Pozzetti, and Anna Wienhard.
\newblock Thurston's asymmetric metrics for {A}nosov representations, 2022.
\newblock \href{https://arxiv.org/abs/2210.05292}{arxiv:2210.05292}.

\bibitem{dai2024email}
Xian Dai.
\newblock Personal communication, 2024.

\bibitem{dancigerGK2024convex}
Jeffrey Danciger, Fran\c{c}ois Gu\'{e}ritaud, and Fanny Kassel.
\newblock Convex cocompact actions in real projective geometry.
\newblock {\em Ann. Sci. Éc. Norm. Supér.}, To appear.

\bibitem{guichard2005regularity}
Olivier Guichard.
\newblock Sur la r\'{e}gularit\'{e} {H}\"{o}lder des convexes divisibles.
\newblock {\em Ergodic Theory Dynam. Systems}, 25(6):1857--1880, 2005.

\bibitem{guichard2005dualite}
Olivier Guichard.
\newblock Une dualit\'{e} pour les courbes hyperconvexes.
\newblock {\em Geom. Dedicata}, 112:141--164, 2005.

\bibitem{guichard2008composantes}
Olivier Guichard.
\newblock Composantes de {H}itchin et repr\'{e}sentations hyperconvexes de groupes de surface.
\newblock {\em J. Differential Geom.}, 80(3):391--431, 2008.

\bibitem{guichard2008convex}
Olivier Guichard and Anna Wienhard.
\newblock Convex foliated projective structures and the {H}itchin component for {${\rm PSL}_4({\bf R})$}.
\newblock {\em Duke Math. J.}, pages 381--445, 2008.

\bibitem{guichard2012anosov}
Olivier Guichard and Anna Wienhard.
\newblock Anosov representations: domains of discontinuity and applications.
\newblock {\em Invent. Math.}, pages 357--438, 2012.

\bibitem{labourie2004anosov}
Fran\c{c}ois Labourie.
\newblock Anosov flows, surface groups and curves in projective space.
\newblock {\em Invent. Math.}, pages 51--114, 2006.

\bibitem{labourie2006anosov}
Fran\c{c}ois Labourie.
\newblock Anosov flows, surface groups and curves in projective space.
\newblock {\em Invent. Math.}, 165(1):51--114, 2006.

\bibitem{labourie2017cyclic}
Fran\c{c}ois Labourie.
\newblock Cyclic surfaces and {H}itchin components in rank 2.
\newblock {\em Ann. of Math.}, pages 1--58, 2017.

\bibitem{leeZhang2017collar}
Gye-Seon Lee and Tengren Zhang.
\newblock Collar lemma for {H}itchin representations.
\newblock {\em Geom. Topol.}, 21:2243--2280, 2017.

\bibitem{mori1956absolute}
Akira Mori.
\newblock On an absolute constant in the theory of quasi-conformal mappings.
\newblock {\em J. Math. Soc. Japan}, 8:156--166, 1956.

\bibitem{nielsen1927untersuchungen}
Jakob Nielsen.
\newblock {Untersuchungen zur Topologie der geschlossenen zweiseitigen Flächen}.
\newblock {\em Acta Mathematica}, 50:189 -- 358, 1927.

\bibitem{nolte2024foliationsSl4}
Alexander Nolte.
\newblock On foliations in $\text{PSL}_4(\mathbb{R})$-{T}eichm{\"u}ller theory.
\newblock Manuscript in preparation.

\bibitem{nolte2023leaves}
Alexander Nolte.
\newblock Leaves of foliated projective structures, 2023.
\newblock \href{https://arxiv.org/abs/2304.01380}{arxiv:2304.01380}.

\bibitem{papadopoulusSu2014Thurston}
Athanase Papadopoulos and Weixu Su.
\newblock Thurston's metric on {T}eichm\"{u}ller space and isomorphisms between {F}uchsian groups.
\newblock In {\em Analysis and geometry of discrete groups and hyperbolic spaces}, volume B48 of {\em RIMS K\^{o}ky\^{u}roku Bessatsu}, pages 95--109. Res. Inst. Math. Sci. (RIMS), Kyoto, 2014.

\bibitem{potrie2017eigenvalues-entropy}
Rafael Potrie and Andr\'{e}s Sambarino.
\newblock Eigenvalues and entropy of a {H}itchin representation.
\newblock {\em Invent. Math.}, 209(3):885--925, 2017.

\bibitem{pozzettiSambarino2022lipschitz}
Beatrice Pozetti and Andr{\'e}s Sambarino.
\newblock Lipschitz limit sets revisited: {H}ilbert entropy and non-differentiability, 2022.
\newblock \href{https://hal.science/hal-03898485}{hal-03898485}.

\bibitem{pozzettiSambarinoWienhard2021conformality}
Maria~Beatrice Pozzetti, Andr\'{e}s Sambarino, and Anna Wienhard.
\newblock Conformality for a robust class of non-conformal attractors.
\newblock {\em J. Reine Angew. Math.}, 774:1--51, 2021.

\bibitem{sambarino2014hyperconvex}
Andr{\'e}s Sambarino.
\newblock Hyperconvex representations and exponential growth.
\newblock {\em Ergodic Theory Dynam. Systems}, 34(3):986--1010, 2014.

\bibitem{sambarino2016entropy}
Andr{\'e}s Sambarino.
\newblock On entropy, regularity and rigidity for convex representations of hyperbolic manifolds.
\newblock {\em Math. Ann.}, 364(1-2):453--483, 2016.

\bibitem{sorvali1973dilatation}
Tuomas Sorvali.
\newblock On the dilatation of isomorphisms between covering groups.
\newblock {\em Ann. Acad. Sci. Fenn. Ser. A. I.}, page~15, 1973.

\bibitem{thurston1998minimal}
William~P. Thurston.
\newblock Minimal stretch maps between hyperbolic surfaces, 1998.
\newblock \href{https://arxiv.org/abs/math/9801039}{arxiv:9801039}.

\bibitem{tsouvalas2023holder}
Konstantinos Tsouvalas.
\newblock The {H\"o}lder exponent of {A}nosov limit maps, 2023.
\newblock \href{https://arxiv.org/abs/2306.15823}{arxiv:2306.15823}.

\bibitem{wienhard2018Invitation}
Anna Wienhard.
\newblock An invitation to higher {T}eichm\"{u}ller theory.
\newblock In {\em Proceedings of the {I}nternational {C}ongress of {M}athematicians---{R}io de {J}aneiro 2018. {V}ol. {II}. {I}nvited lectures}, pages 1013--1039. World Sci. Publ., Hackensack, NJ, 2018.

\bibitem{wolpert1979length}
Scott Wolpert.
\newblock The length spectra as moduli for compact {R}iemann surfaces.
\newblock {\em Ann. of Math. (2)}, 109(2):323--351, 1979.

\bibitem{zhangZimmer2019Regularity}
Tengren Zhang and Andrew Zimmer.
\newblock Regularity of limit sets of {A}nosov representations, 2019.
\newblock \href{https://arxiv.org/abs/1903.11021}{arxiv:1903.11021}.

\end{thebibliography}
\end{document}